\newcommand{\nc}{\newcommand}
\newcommand{\delete}[1]{}
\nc{\mlabel}[1]{\label{#1}}  
\nc{\mcite}[1]{\cite{#1}}  
\nc{\mref}[1]{\ref{#1}}  
\nc{\meqref}[1]{\eqref{#1}}  
\nc{\mbibitem}[1]{\bibitem{#1}} 
\nc{\mlabel}[1]{\label{#1}  
{\hfill \hspace{1cm}{\small\tt{{\ }\hfill(#1)}}}}
\nc{\mcite}[1]{\cite{#1}{\small{\tt{{\ }(#1)}}}}  
\nc{\mref}[1]{\ref{#1}{{\tt{{\ }(#1)}}}}  
\nc{\meqref}[1]{\eqref{#1}{{\tt{{\ }(#1)}}}}  
\nc{\mbibitem}[1]{\bibitem[\bf #1]{#1}} 
\newcommand{\Rmnum}[1]{\expandafter\@slowromancap\romannumeral #1@}
\newtheorem{theorem}{Theorem}[section]
\newtheorem{prop}[theorem]{Proposition}
\newtheorem{lemma}[theorem]{Lemma}
\theoremstyle{definition}
\newtheorem{defn}[theorem]{Definition}
\newtheorem{prop-def}{Proposition-Definition}[section]
\newtheorem{remark}[theorem]{Remark}
\newtheorem{conjecture}[theorem]{Conjecture}
\newtheorem{tempex}[theorem]{Example}
\newtheorem{tempexs}[theorem]{Examples}
\newenvironment{exam}{\begin{tempex}\rm}{\end{tempex}}
\nc{\Irr}{\mathrm{Irr}}
\nc{\ncrbw}{\calr}  
\nc{\NS}{U_{NS}}
\nc{\FN}{F_{\mathrm Nij}}
\nc{\dfgen}{V} \nc{\dfrel}{R}
\nc{\dfgenb}{\vec{v}} \nc{\dfrelb}{\vec{r}}
\nc{\dfgene}{v} \nc{\dfrele}{r}
\nc{\dfop}{\odot}
\nc{\dfoa}{\dfop^{(1)}} \nc{\dfob}{\dfop^{(2)}}
\nc{\dfoc}{\dfop^{(3)}} \nc{\dfod}{\dfop^{(4)}}
\nc{\mapm}[1]{\lfloor\!|{#1}|\!\rfloor}
\nc{\cmapm}[1]{\frakC(#1)}
\nc{\red}{\mathrm{Red}}
\nc{\cm}{C}
\nc{\supp}{\mathrm{Supp}}
\nc{\lex}{\mathrm{lex}}
\nc{\disp}[1]{\displaystyle{#1}}
\nc{\bin}[2]{ (_{\stackrel{\scs{#1}}{\scs{#2}}})}  
\nc{\bs}{\bar{S}} \nc{\ep}{\epsilon}
\nc{\dbigcup}{\stackrel{\bullet}{\bigcup}}
\nc{\la}{\longrightarrow} \nc{\cprod}{\ast} \nc{\rar}{\rightarrow}
\nc{\dar}{\downarrow} \nc{\labeq}[1]{\stackrel{#1}{=}}
\nc{\dap}[1]{\downarrow \rlap{$\scriptstyle{#1}$}}
\nc{\uap}[1]{\uparrow \rlap{$\scriptstyle{#1}$}}
\nc{\defeq}{\stackrel{\rm def}{=}} \nc{\dis}[1]{\displaystyle{#1}}
\nc{\dotcup}{\ \displaystyle{\bigcup^\bullet}\ }
\nc{\sdotcup}{\tiny{ \displaystyle{\bigcup^\bullet}\ }}
\nc{\fe}{\'{e}}
\nc{\hcm}{\ \hat{,}\ } \nc{\hcirc}{\hat{\circ}}
\nc{\hts}{\hat{\shpr}} \nc{\lts}{\stackrel{\leftarrow}{\shpr}}
\nc{\denshpr}{\den{\shpr}}
\nc{\rts}{\stackrel{\rightarrow}{\shpr}} \nc{\lleft}{[}
\nc{\lright}{]} \nc{\uni}[1]{\tilde{#1}} \nc{\free}[1]{\bar{#1}}
\nc{\freea}[1]{\tilde{#1}} \nc{\freev}[1]{\hat{#1}}
\nc{\dt}[1]{\hat{#1}}
\nc{\wor}[1]{\check{#1}}
\nc{\intg}[1]{F_C(#1)}
\nc{\den}[1]{\check{#1}} \nc{\lrpa}{\wr} \nc{\mprod}{\pm}
\nc{\dprod}{\ast_P} \nc{\curlyl}{\left \{ \begin{array}{c} {} \\
{} \end{array}
    \right .  \!\!\!\!\!\!\!}
\nc{\curlyr}{ \!\!\!\!\!\!\!
    \left . \begin{array}{c} {} \\ {} \end{array}
    \right \} }
\nc{\longmid}{\left | \begin{array}{c} {} \\ {} \end{array}
    \right . \!\!\!\!\!\!\!}
\nc{\lin}{\call} \nc{\ot}{\otimes}
\nc{\ora}[1]{\stackrel{#1}{\rar}}
\nc{\ola}[1]{\stackrel{#1}{\la}}
\nc{\scs}[1]{\scriptstyle{#1}} \nc{\mrm}[1]{{\rm #1}}
\nc{\margin}[1]{\marginpar{\rm #1}}   
\nc{\dirlim}{\displaystyle{\lim_{\longrightarrow}}\,}
\nc{\invlim}{\displaystyle{\lim_{\longleftarrow}}\,}
\nc{\mvp}{\vspace{0.5cm}}
\nc{\mult}{m}       
\nc{\svp}{\vspace{2cm}} \nc{\vp}{\vspace{8cm}}
\nc{\proofbegin}{\noindent{\bf Proof: }}
\nc{\proofend}{$\blacksquare$ \vspace{0.5cm}}
\nc{\sha}{{\mbox{\cyr X}}}  
\nc{\ncsha}{{\mbox{\cyr X}^{\mathrm NC}}}
\newfont{\scyr}{wncyr10 scaled 550}
\nc{\ssha}{\mbox{\bf \scyr X}}
\nc{\ncshao}{{\mbox{\cyr X}^{\mathrm NC,\,0}}}
\nc{\shpr}{\diamond}    
\nc{\shprc}{\shpr_c}
\nc{\shpro}{\diamond^0}    
\nc{\shpru}{\check{\diamond}} \nc{\spr}{\cdot}
\nc{\catpr}{\diamond_l} \nc{\rcatpr}{\diamond_r}
\nc{\lapr}{\diamond_a} \nc{\lepr}{\diamond_e} \nc{\sprod}{\bullet}
\nc{\un}{u}                 
\nc{\vep}{\varepsilon} \nc{\labs}{\mid\!} \nc{\rabs}{\!\mid}
\nc{\hsha}{\widehat{\sha}} \nc{\psha}{\sha^{+}} \nc{\tsha}{\tilde{\sha}}
\nc{\lsha}{\stackrel{\leftarrow}{\sha}}
\nc{\rsha}{\stackrel{\rightarrow}{\sha}} \nc{\lc}{\lfloor}
\nc{\rc}{\rfloor} \nc{\sqmon}[1]{\langle #1\rangle}
\nc{\altx}{\Lambda} \nc{\vecT}{\vec{T}} \nc{\piword}{{\mathfrak P}}
\nc{\lbar}[1]{\overline{#1}}
\nc{\dep}{\mathrm{dep}}
\nc{\mmbox}[1]{\mbox{\ #1\ }}
\nc{\ayb}{\mrm{AYB}} \nc{\mayb}{\mrm{mAYB}} \nc{\cyb}{\mrm{cyb}}
\nc{\ann}{\mrm{ann}} \nc{\Aut}{\mrm{Aut}} \nc{\cabqr}{\mrm{CABQR
}} \nc{\can}{\mrm{can}} \nc{\colim}{\mrm{colim}}
\nc{\Cont}{\mrm{Cont}} \nc{\rchar}{\mrm{char}}
\nc{\cok}{\mrm{coker}} \nc{\dtf}{{R-{\rm tf}}} \nc{\dtor}{{R-{\rm
tor}}}
\nc{\Div}{{\mrm Div}} \nc{\End}{\mrm{End}} \nc{\Ext}{\mrm{Ext}}
\nc{\FG}{\mrm{FG}} \nc{\Fil}{\mrm{Fil}} \nc{\Frob}{\mrm{Frob}}
\nc{\Gal}{\mrm{Gal}} \nc{\GL}{\mrm{GL}} \nc{\Hom}{\mrm{Hom}}
\nc{\hsr}{\mrm{H}} \nc{\hpol}{\mrm{HP}} \nc{\id}{\mrm{id}} \nc{\Id}{\mathrm{Id}}  \nc{\ID}{\mathrm{ID}}
\nc{\im}{\mrm{im}} \nc{\incl}{\mrm{incl}} \nc{\Loday}{\mrm{ABQR}\
} \nc{\length}{\mrm{length}} \nc{\LR}{\mrm{LR}} \nc{\mchar}{\rm
char} \nc{\pmchar}{\partial\mchar} \nc{\map}{\mrm{Map}}
\nc{\MS}{\mrm{MS}} \nc{\OS}{\mrm{OS}} \nc{\NC}{\mrm{NC}}
\nc{\rba}{\rm{Rota-Baxter algebra}\xspace}
\nc{\rbas}{\rm{Rota-Baxter algebras}\xspace}
\nc{\rbw}{\ncrbw}
\nc{\rbws}{\rm{RBWs}\xspace}
\nc{\rbadj}{\rm{RB}\xspace}
\nc{\mpart}{\mrm{part}} \nc{\ql}{{\QQ_\ell}} \nc{\qp}{{\QQ_p}}
\nc{\rank}{\mrm{rank}} \nc{\rcot}{\mrm{cot}} \nc{\rdef}{\mrm{def}}
\nc{\rdiv}{{\rm div}} \nc{\rtf}{{\rm tf}} \nc{\rtor}{{\rm tor}}
\nc{\res}{\mrm{res}} \nc{\SL}{\mrm{SL}} \nc{\Spec}{\mrm{Spec}}
\nc{\tor}{\mrm{tor}} \nc{\Tr}{\mrm{Tr}}
\nc{\mtr}{\mrm{tr}}
\nc{\ab}{\mathbf{Ab}} \nc{\Alg}{\mathbf{Alg}}
\nc{\Bax}{\mathbf{CRB}} \nc{\Algo}{\mathbf{Alg}^0}
\nc{\cRB}{\mathbf{CRB}} \nc{\cRBo}{\mathbf{CRB}^0}
\nc{\RBo}{\mathbf{RB}^0} \nc{\BRB}{\mathbf{RB}}
\nc{\Dend}{\mathbf{DD}} \nc{\bfk}{{\bf k}} \nc{\bfone}{{\bf 1}}
\nc{\base}[1]{{a_{#1}}} \nc{\Cat}{\mathbf{Cat}}
 \nc{\DN}{\mathbf{DN}}
\nc{\NA}{\mathbf{NA}}
\nc{\SDN}{\mathbf{SDN}}
\nc{\Diff}{\mathbf{Diff}} \nc{\gap}{\marginpar{\bf
Incomplete}\noindent{\bf Incomplete!!}
    \svp}
\nc{\FMod}{\mathbf{FMod}} \nc{\Int}{\mathbf{Int}}
\nc{\Mon}{\mathbf{Mon}}
\nc{\RB}{\mathbf{RB}} \nc{\remarks}{\noindent{\bf Remarks: }}
\nc{\Rep}{\mathbf{Rep}} \nc{\Rings}{\mathbf{Rings}}
\nc{\Sets}{\mathbf{Sets}} \nc{\bfx}{\mathbf{x}}
\nc{\BA}{{\Bbb A}} \nc{\CC}{{\Bbb C}} \nc{\DD}{{\Bbb D}}
\nc{\EE}{{\Bbb E}} \nc{\FF}{{\Bbb F}} \nc{\GG}{{\Bbb G}}
\nc{\HH}{{\Bbb H}} \nc{\LL}{{\Bbb L}} \nc{\NN}{{\Bbb N}}
\nc{\QQ}{{\Bbb Q}} \nc{\RR}{{\Bbb R}} \nc{\TT}{{\Bbb T}}
\nc{\VV}{{\Bbb V}} \nc{\ZZ}{{\Bbb Z}}
\nc{\cala}{{\mathcal A}} \nc{\calb}{{\mathcal B}}
\nc{\calc}{{\mathcal C}}
\nc{\cald}{{\mathcal D}} \nc{\cale}{{\mathcal E}}
\nc{\calf}{{\mathcal F}} \nc{\calg}{{\mathcal G}}
\nc{\calh}{{\mathcal H}} \nc{\cali}{{\mathcal I}}
\nc{\calj}{{\mathcal J}} \nc{\call}{{\mathcal L}}
\nc{\calm}{{\mathcal M}} \nc{\caln}{{\mathcal N}}
\nc{\calo}{{\mathcal O}} \nc{\calp}{{\mathcal P}}
\nc{\calr}{{\mathcal R}} \nc{\cals}{{\mathcal S}} \nc{\calt}{{\mathcal T}}
\nc{\calw}{{\mathcal W}} \nc{\calx}{{\mathcal X}} \nc{\caly}{{\mathcal Y}} \nc{\calz}{{\mathcal Z}}
\nc{\CA}{\mathcal{A}}
\nc{\frakA}{{\mathfrak A}}
\nc{\fraka}{{\mathfrak a}}
\nc{\frakB}{{\mathfrak B}}
\nc{\frakb}{{\mathfrak b}}
\nc{\frakC}{{\mathfrak C}}
\nc{\frakd}{{\mathfrak d}}
\nc{\frakF}{{\mathfrak F}}
\nc{\frakg}{{\mathfrak g}}
\nc{\frakm}{{\mathfrak m}}
\nc{\frakM}{{\mathfrak M}}
\nc{\frakMo}{{\mathfrak M}^0}
\nc{\frakP}{{\mathfrak P}}
\nc{\frakp}{{\mathfrak p}}
\nc{\frakS}{{\mathfrak S}}
\nc{\frakSo}{{\mathfrak S}^0}
\nc{\fraks}{{\mathfrak s}}
\nc{\os}{\overline{\fraks}}
\nc{\frakT}{{\mathfrak T}}
\nc{\frakTo}{{\mathfrak T}^0}
\nc{\oT}{\overline{T}}
\nc{\frakX}{{\mathfrak X}}
\nc{\frakXo}{{\mathfrak X}^0}
\nc{\frakx}{{\mathbf x}}
\nc{\frakTx}{\frakT}      
\nc{\frakTa}{\frakT^a}        
\nc{\frakTxo}{\frakTx^0}   
\nc{\caltao}{\calt^{a,0}}   
\nc{\ox}{\overline{\frakx}} \nc{\fraky}{{\mathfrak y}}
\nc{\frakz}{{\mathfrak z}} \nc{\oX}{\overline{X}} \font\cyr=wncyr10
\nc{\tred}[1]{\textcolor{red}{#1}} \nc{\tgreen}[1]{\textcolor{green}{#1}}
\nc{\tblue}[1]{\textcolor{blue}{#1}} \nc{\tpurple}[1]{\textcolor{purple}{#1}}
\nc{\li}[1]{\tpurple{\underline{Li:}#1 }}
\nc{\liadd}[1]{\tpurple{#1}}
\nc{\xing}[1]{\tblue{\underline{Xing:}#1 }}
\nc{\YZ}[1]{\tred{\underline{Yaozhou:} #1}}
\nc{\deleted}[1]{\delete{#1}}
\nc{\astarrow}{\overset{\raisebox{-2pt}{{\scriptsize $\ast$}}}{\rightarrow}}\nc{\tvarrow}[3]{#1\overset{(t,v)}{\longrightarrow}_{#3} #2}
\nc{\fg}[2]{\tilde{\mathfrak{g}}_{#1}^{(#2)}}
\nc{\sg}[2]{g_{#1,#2}^{(k)}}
\nc{\De}[2]{\Delta^{#1}(#2)}
\nc{\bott}{b}
\nc{\down}[2]{{\rm down}_{#1}(#2)}    \nc{\bdo}[2]{{\rm d}_{#1}(#2)}
\nc{\Down}[3]{{\rm down}_{#1}^{#2}(#3)}  \nc{\bDo}[3]{{\rm d}_{#1}^{#2}(#3)}
\nc{\uup}[2]{{\rm up}_{#1}(#2)}
\nc{\dkmu}{\Delta^k(\mu)} \nc{\dkl}{\Delta^k(\lambda)}
\nc{\dkga}{\Delta^k(\gamma)}
\nc{\pk}{{\rm P}^k}
\nc{\pkl}{{\rm P}_{\ell}^{k}}
\nc{\tpkl}{\tilde{{\rm P}}_{\ell}^{k}}
\nc{\spkl}{\hat{\rm P}_{\ell}^{k}}
\nc{\lmx}{\mu} \nc{\lsum}{\nu}
\nc{\ddh}{\bar{z}}
\begin{document}
\title[$K$-$k$-Schur alternating conjecture]{Weighted $K$-$k$-Schur functions and their application to the  $K$-$k$-Schur alternating conjecture}
%
\author{Yaozhou Fang}
\address{School of Mathematics and Statistics, Lanzhou University
Lanzhou, 730000, China
}
\email{fangyzh21@lzu.edu.cn}

\author{Xing Gao$^*$}\thanks{*Corresponding author}
\address{School of Mathematics and Statistics, Lanzhou University,
Lanzhou, 730000, China; Gansu Provincial Research Center for Basic Disciplines of Mathematics and Statistics, Lanzhou, 730070, China
}
\email{gaoxing@lzu.edu.cn}

\date{\today}
\begin{abstract}
We introduce the new concept of weighted $K$-$k$-Schur functions---a novel family within the broader class of Katalan functions---that unifies and extends both $K$-$k$-Schur functions and closed $k$-Schur Katalan functions. This new notion exhibits a fundamental alternating property under certain conditions on the indexed $k$-bounded partitions.
As a central application, we resolve the $K$-$k$-Schur alternating conjecture---posed by Blasiak, Morse, and Seelinger in 2022---for a wide class of $k$-bounded partitions, including all strictly decreasing $k$-bounded partitions. Our results shed new light on the combinatorial structure of $K$-theoretic symmetric functions.
\end{abstract}

\makeatletter
\@namedef{subjclassname@2020}{\textup{2020} Mathematics Subject Classification}
\makeatother
\subjclass[2020]{
05E05, 
05E10, 
14N15, 
}

\keywords{Katalan function, (weighted) $K$-$k$-Schur function, closed $k$-Schur Katalan function, positivity}

\maketitle

\tableofcontents

\setcounter{section}{0}

\allowdisplaybreaks

\section{Introduction}
In this paper, we introduce the notion of weighted $K$-$k$-Schur functions, a family of Katalan functions and a unifying framework that encompasses both $K$-$k$-Schur functions and closed $k$-Schur Katalan functions. This notion and a successful application of Katalan formula reveal a fundamental alternating property and lead to a positive resolution of the $K$-$k$-Schur alternating conjecture~\cite{BMS} for a large class of $k$-bounded partitions.
Some usual notations are listed in Subsection~\mref{ss:outline}.

\subsection{Catalan functions and Katalan functions}
{\em Catalan functions}, introduced by Chen-Haiman~\mcite{Ch} and Panyushev~\mcite{Pan}, form a broad and unifying family of symmetric functions that extend both $k$-Schur functions and parabolic Hall-Littlewood polynomials. Arising naturally in the study of Euler characteristics of vector bundles on flag varieties~\cite{Br, Ch, Pan, SW}, they are defined using Demazure operators---central tools in the geometric analysis of cohomology on such spaces. Catalan functions have since been further developed in~\cite{BMP24, BMPS19, BMPS20}, and continue to reveal rich connections to Schubert calculus, Gromov-Witten theory, and the representation theory of quantum groups~\mcite{Hagl}.

Building on this foundation of Catalan functions, {\em Katalan functions} were introduced by Blasiak, Morse, and Seelinger~\cite{BMS} as a class of inhomogeneous symmetric functions that may be viewed as inhomogeneous analogues of Catalan functions. Katalan functions unify and extend several important families of symmetric functions, including the $K$-$k$-Schur functions $g_\lambda^{(k)}$ and the closed $k$-Schur Katalan functions $\fg{\lambda}{k}$. They arise naturally in the study of Schubert calculus in $K$-theory, the geometry of the affine Grassmannian, and the combinatorics of $k$-Schur functions. In essence, the theory of Katalan functions provides a powerful and flexible framework that links combinatorics, geometry, and representation theory, while illuminating several open problems.
For example, based on the Katalan formula of the $K$-$k$-Schur function $g_\lambda^{(k)}$, Blasiak et al. showed that $g_\lambda^{(k)}$ can degenerate to $g_{\lambda}$---the dual stable Grothendieck polynomials studied in~\cite{Las01,Le}---when $k$ is sufficiently large. They also proved that $g_\lambda^{(k)}$ is $\{g_{\lambda}\}_{\lambda\in\pk}$ alternating positive (see~\cite[Corollaries~2.9 and2.10]{BMS}).

\subsection{$K$-$k$-Schur functions}
The {\em $K$-$k$-Schur functions} $g_\lambda^{(k)}$ arose in the development of $K$-theoretic Schubert calculus, a $K$-theoretic extension of classical and affine Schubert calculus.
Schubert calculus itself traces back to Hermann Schubert's foundational work in 19th-century enumerative geometry~\mcite{Sch}, and gained mathematical rigor and prominence through Hilbert's 15th problem~\mcite{Hil}. Since then, it has played a central role across geometry, topology, combinatorics, and representation theory, particularly through the study of the cohomology ring structure constants of Grassmannians $Gr(n,k)$.
In Schubert calculus, Schubert classes in $H^*(GL(n,k))$ are represented precisely by Schur functions.

To refine Schubert calculus, Lapointe, Lascoux, and Morse~\cite{LLM} introduced the $k$-Schur functions in connection with a deeper understanding of Macdonald positivity conjecture~\cite{LLMSSZ}. Later, Lam~\cite{Lam} advanced the study of affine Schubert calculus by showing that the Schubert classes in the homology of the affine Grassmannian ${\rm Gr}$ correspond to $k$-Schur functions under a Hopf algebra isomorphism originally studied by Bott~\cite{Bot}, mapping $H_*({\rm Gr})$ to the subalgebra of symmetric functions $\Lambda_{(k)} := \mathbb{Q}[h_1, \ldots, h_k]$.

Extending this perspective to $K$-theory, Lam, Schilling, and Shimozono~\cite{LSS} proposed a $K$-theoretic version of affine Schubert calculus. They demonstrated that the $K$-homology $K_*({\rm Gr})$ of the affine Grassmannian ${\rm Gr}$ also admits a Hopf algebra isomorphism to $\Lambda_{(k)}$. Within this framework, they introduced the $K$-$k$-Schur functions $g_\lambda^{(k)}$ as representatives for the classes of ideal sheaves of the boundaries of Schubert varieties. Further, Lam et al.~\cite{LLMS} showed that the class of the structure sheaf of a Schubert variety maps to the closed $K$-$k$-Schur function $\sum_{w_\mu \leq w_\lambda} g_\mu^{(k)}$ under the same isomorphism. This development significantly enriched the study of $K$-theoretic Schubert calculus and deepened the connection between geometry, representation theory, and symmetric functions.

\subsection{Closed $k$-Schur Katalan functions}
Quite recently, Ikeda, Iwao, and Maeno~\mcite{IIM} proposed a $K$-theoretic analogue of the isomorphism constructed by Lam and Shimozono~\cite{LS}, denoted by $\Phi$, between localizations of the quantum $K$-theory ring of the complete flag variety $F\ell_{k+1}$ and the localized $K$-homology of the affine Grassmannian:
\begin{equation*}
\begin{tikzcd}[column sep=scriptsize, row sep=scriptsize]
\mathcal{QH}^*(F\ell_{k+1})_{{\rm loc}} \arrow{rr}{\text{\cite{LS}}}[swap]{\backsimeq} \arrow{dd}[swap]{\text{$K$-theoretic}}{\text{version}} &&H_*({\rm Gr})_{{\rm loc}} \arrow{dd}[swap]{\text{$K$-theoretic}}{\text{version}}\\
\\
\mathcal{QK}(F\ell_{k+1})_{{\rm loc}} \arrow{rr}{\text{\cite{IIM}}~\Phi}[swap]{\backsimeq} &&K_*({\rm Gr})_{{\rm loc}}
\end{tikzcd}
\end{equation*}
Here, $\mathcal{QH}^*(F\ell_{k+1})$ is the quantum cohomology ring, and $\mathcal{QK}(F\ell_{k+1})$ is the quantum $K$-theory ring of $F\ell_{k+1}$, previously studied by Givental and Lee~\mcite{GL}.
In their foundational work~\cite{LS}, Lam and Shimozono established that $k$-Schur functions form the core of the image of the quantum Schubert polynomials, introduced by Fomin, Gelfand, and Postnikov~\cite{FGP}. This result bridged two seemingly disparate structures---$k$-Schur functions and quantum Schubert polynomials---and enabled the transfer of combinatorial and algebraic properties between them.
In the $K$-theoretic setting, Lenart and Maeno~\mcite{LeMa} introduced a natural analogue of quantum Schubert polynomials, called quantum Grothendieck polynomials $\mathfrak{B}_w^Q \in \mathcal{QK}(F\ell{k+1})$, for permutations $w$ in the symmetric group $S_{k+1}$. To better understand the images $\Phi(\mathfrak{B}_w^Q)$ under the $K$-theoretic isomorphism $\Phi$, Ikeda, Iwao, and Maeno~\mcite{IIM} introduced a family of symmetric functions $\tilde{g}_w$, closely related to closed $K$-$k$-Schur functions.

To study $\tilde{g}_w$, Blasiak, Morse, and Seelinger~\mcite{BMS} proposed a distinguished subfamily of Katalan functions, called {\em closed $k$-Schur Katalan functions} $\fg{\lambda}{k}$ (see Definition~\mref{defn:cKataFunc}), and formulated a series of six conjectures regarding their structures and properties.
\begin{conjecture}(\cite[Conjecture~2.12]{BMS})
Let $\lambda\in\pkl$ and $w\in S_{k+1}$. Then
\begin{enumerate}[label={(\alph*)}]
\item $\tilde{g}_{w} = \fg{\nu}{k}$, where $\nu := \theta(w)^{\omega_k}$, \mlabel{it:conja}

\item
\[
\Phi(\mathfrak{B}_w^Q) = \frac{\fg{\nu}{k}}{\prod_{d\in {\rm Des}(w)}g_{R_d}},
\]\mlabel{it:conjb}

\item (alternating dual Pieri rule) the coefficients in $G_{1^m}^\perp \fg{\lambda}{k} = \sum_{\mu}c_{\lambda\mu}\fg{\mu}{k}$ satisfy $(-1)^{|\lambda|-|\mu|-m}c_{\lambda\mu}\in\ZZ_{\geq 0}$, \mlabel{it:conjc}

\item ($k$-branching) the coefficients in $\fg{\lambda}{k} = \sum_{\mu}a_{\lambda\mu}\fg{\mu}{k+1}$ satisfy $(-1)^{|\lambda|-|\mu|}a_{\lambda\mu}\in\ZZ_{\geq 0}$, \mlabel{it:conjd}

\item ($K$-$k$-Schur alternating) the coefficients in
$
\fg{\lambda}{k} = \sum_{\mu\in{\rm P}}b_{\lambda\mu}g_\mu^{(k)}
$
satisfy $(-1)^{|\lambda|-|\mu|} b_{\lambda\mu}\in\ZZ_{\geq0}$, \mlabel{it:conje}

\item ($k$-rectangle property) for $d\in[k]$, $g_{R_d}\fg{\lambda}{k} = \fg{\lambda\cup R_d}{k}$. \mlabel{it:conjf}
\end{enumerate}
See~\cite[Subsection~2.4]{BMS} for detailed explanations of the notations.
\mlabel{conj:aim}
\end{conjecture}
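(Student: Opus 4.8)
The plan is to prove the $K$-$k$-Schur alternating statement~\ref{it:conje} of Conjecture~\ref{conj:aim} by interpolating between the closed $k$-Schur Katalan function $\fg{\lambda}{k}$ and the $K$-$k$-Schur functions $g_\mu^{(k)}$ through the new family of \emph{weighted $K$-$k$-Schur functions} $\fg{\lambda}{k}[\mathbf{m}]$, tracking signs one elementary step at a time. Each $\fg{\lambda}{k}[\mathbf{m}]$ will be a Katalan function $K(\Psi;\gamma)$ whose root ideal $\Psi$ and weight $\gamma$ depend on a $k$-bounded partition $\lambda$ together with an auxiliary weight vector $\mathbf{m}\in\ZZ_{\geq0}^{\ell(\lambda)}$, arranged so that two extreme values of $\mathbf{m}$ recover $g_\lambda^{(k)}$ at one end and $\fg{\lambda}{k}$ at the other, while every intermediate member stays in the same ambient family and remains indexed (after straightening) by $k$-bounded partitions. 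The first task is thus to pin down the correct root ideals and prove the two boundary identifications, starting from the Katalan formulas for $g_\lambda^{(k)}$ and $\fg{\lambda}{k}$ recalled earlier; this is essentially bookkeeping with Demazure/raising operators, but it must be carried out carefully since it fixes the parameter along which the induction runs.

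The core of the argument is a \emph{one-step alternating expansion}: decreasing $\mathbf{m}$ by a single unit $e_i$ produces an identity of the form
\[
\fg{\lambda}{k}[\mathbf{m}]\;=\;\sum_{\mu}c^{\mathbf{m}}_{\lambda\mu}\,\fg{\mu}{k}[\mathbf{m}-e_i],\qquad(-1)^{|\lambda|-|\mu|-\delta_i}\,c^{\mathbf{m}}_{\lambda\mu}\in\ZZ_{\geq0},
\]
where $\delta_i\in\{0,1\}$ records whether that step lowers the degree. I would derive this from the Katalan formula in the form used in~\cite{BMS}: trading one ``$\tfrac{1}{1-R_{ij}}$'' factor of the raising-operator expression for a ``$(1-R_{ij})$'' factor, up to the inhomogeneous $\beta$-correction, yields a short alternating sum whose terms must then be \emph{straightened} back into weighted $K$-$k$-Schur functions indexed by $k$-bounded partitions (via the ``up/down'' operations on partitions). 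The crucial point is that each application of the $\beta$-part of the operator drops the degree by exactly one, so a term reached after $j$ such applications carries sign $(-1)^{j}$; composing the step identities telescopes the signs additively, producing exactly the parity $(-1)^{|\lambda|-|\mu|}$ required in~\ref{it:conje}.

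Chaining these one-step expansions from the $\fg{\lambda}{k}$-end of the family down to the $g_\mu^{(k)}$-end then yields $\fg{\lambda}{k}=\sum_{\mu}b_{\lambda\mu}\,g_\mu^{(k)}$ with $(-1)^{|\lambda|-|\mu|}\,b_{\lambda\mu}\in\ZZ_{\geq0}$, \emph{provided} the recursion stays closed — that is, at each stage the straightening never creates an index outside the $k$-bounded range nor a raising-operator expression that fails to collapse to a member of the weighted family. This closure is automatic for a restricted class of indexing partitions, and I would isolate that class by analyzing exactly when the up/down straightening moves preserve the relevant shape. Strictly decreasing $k$-bounded partitions lie inside this class because the geometry of their Young diagrams forces every intermediate index to remain a genuine $k$-bounded partition, so the sign bookkeeping is never interrupted; I would then record the largest class of $\lambda$ for which the same analysis goes through.

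The step I expect to be the real obstacle is the sign control in the straightening. Raising operators routinely push a weight out of dominant (hence $k$-bounded) shape, and re-expressing the outcome can a priori generate terms of both signs, which would destroy the alternating pattern. The heart of the proof is to show that for the admissible $\lambda$ the straightening is sign-coherent — every surviving term has the parity dictated by its degree drop — and that the visible cancellations are precisely the expected ones. A secondary, more technical difficulty is confirming that the two boundary specializations of the weighted family are literally $g_\lambda^{(k)}$ and $\fg{\lambda}{k}$, not merely equal to them up to lower-degree corrections; this exact identification is what licenses telescoping all the way across the family.
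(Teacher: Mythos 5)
You address only part~\ref{it:conje} of the conjecture, and only for a restricted class of $\lambda$; that matches what the paper actually establishes (Theorems~\ref{thm:aim} and~\ref{thm:aim1} prove~\ref{it:conje} for $\lambda\in\spkl$), so the conjecture as stated remains open under both treatments. Your architecture --- an interpolating family of Katalan functions with $g_\lambda^{(k)}$ and $\fg{\lambda}{k}$ at the two ends, a one-step alternating recursion, telescoping of signs, and a closure condition singling out (roughly) strictly decreasing partitions --- is exactly the paper's. Two points, however, separate the plan from a proof.

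First, the interpolation axis is misidentified. By Lemma~\ref{lem:KkSch} and Definition~\ref{defn:cKataFunc}, $g_\lambda^{(k)}=K(\dkl;\De{k+1}{\lambda};\lambda)$ and $\fg{\lambda}{k}=K(\dkl;\dkl;\lambda)$ have the \emph{same} root ideal; they differ only in the multiset $M$, which for $\fg{\lambda}{k}$ additionally contains $\{\bdo{\lambda}{x}\mid x\in[\bott_\lambda]\}$. The elementary step is therefore the insertion of one factor $(1-L_{\bdo{\lambda}{z}})$ via Lemma~\ref{lem:relk}~(\ref{it:relk3})--(\ref{it:relk4}) (this is Lemma~\ref{lem:ztoz1}), not a trade of a $(1-R_{ij})^{-1}$ factor for $(1-R_{ij})$. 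Moreover the paper's weight is a single integer $z$ recording which suffix $\{\bdo{\lambda}{x}\mid x\in[z,\bott_\lambda]\}$ is withheld from $L(\dkl)$; an arbitrary vector $\mathbf{m}$ yields a larger family in which the one-step identity need not close, and the specific order of insertion ($z=1,2,\dots,\bott_\lambda$) is what makes the induction in Theorem~\ref{thm:aim} go through, since each step preserves the first $z$ parts of the index.

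Second --- the genuine gap --- the step you flag as ``sign control in the straightening'' is where the entire proof lives, and you offer no mechanism for it. When a lowering operator produces an index $\gamma=\lambda-\epsilon_{\bDo{\lambda}{a}{z}}\notin\pkl$, one cannot simply ``re-express'' the offending term (that could indeed generate both signs). The paper instead proves that such a term is either zero or equal to a \emph{single} further lowering $L_{\bDo{\lambda}{a+1}{z}}\sg{\lambda}{z}$ one step down the bounce path, by verifying the ceiling/mirror/wall hypotheses of the Mirror Lemma~\ref{lem:mirr1} for $\dkl$ (Proposition~\ref{prop:bigd}) or for $\dkl\setminus\beta$ (Proposition~\ref{prop:smalld}); iterating this along the bounce path is the content of Proposition~\ref{prop:ind}. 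This is also precisely where the hypothesis $\lambda_1>\cdots>\lambda_z$, hence the class $\spkl$, is consumed: it guarantees the existence of the index $c$ in~\eqref{eq:csmallest} supplying the ceiling needed to invoke the Mirror Lemma. Without this (or an equivalent) vanishing mechanism, the alternating pattern is not established even for strictly decreasing $\lambda$.
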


Among these six conjectures, several logical relationships hold. Assuming~\mref{it:conja}, part~\mref{it:conjf} follows from the work of Takigiku~\cite{Ta19}, while~\mref{it:conjc} implies~\mref{it:conjd} by~\cite[Proposition2.16~(c)]{BMS}. Parts~\mref{it:conja} and~\mref{it:conjb} of Conjecture~\mref{conj:aim} were proved by Ikeda, Iwao, and Naito~\mcite{IIN}. Independently, we gave an alternative proof of part~\mref{it:conja} using a different approach~\cite{FGG}, and also established parts~\mref{it:conjc} and~\mref{it:conjd} for certain families of $k$-bounded partitions~\cite{FG}.

Part~\mref{it:conjd} is motivated by the branching rule for $k$-Schur functions~\cite[Theorem~2.6]{BMPS19}, and is used in~\cite{BMS} to
show that $\fg{\lambda}{k}$ is $\{g_{\lambda}\}_{\lambda\in\pk}$ alternating positive, using the fact that $\fg{\lambda}{k} = g_{\lambda}$ if $k$ is sufficiently large.
For variants of part~\mref{it:conjf}, Lapointe and Morse~\cite[Theorem~40]{LM} established the $k$-rectangle property for $k$-Schur functions. Takigiku~\cite{Ta19} prove this property for closed $K$-$k$-Schur functions, while also demonstrating that $K$-$k$-Schur functions do not possess the $k$-rectangle property.

\subsection{Main results and outline of the paper} \mlabel{ss:outline}
In this paper,
we prove that Conjecture~\mref{conj:aim}~\mref{it:conje} is valid for the following class of partitions,
\begin{equation}
\spkl:=\{ \lambda\in\pkl\mid \lambda_{x-1}>\lambda_{x} \,\text{ if }\, k-\lambda_x+x<\ell\}.
\mlabel{eq:spkl}
\end{equation}
We interpret that $\lambda_0 = \infty$ so that $\lambda_0>\lambda_1$, and use the convention $\hat{\rm P}_0^k:={\rm P}_0^k:=\emptyset$. It is obvious that $\spkl$ includes the $k$-bounded strictly decreasing partitions with length $\ell$, which satisfy $\lambda\in\pkl$ and $\lambda_1>\lambda_2>\cdots$.

Inspired by the method in~\cite{BMPS19,BMPS20,BMS,FG, FGG}, we propose the concept of weighted $K$-$k$-Schur function (see Definition~\mref{def:semiKk}), which generalize simultaneously $K$-$k$-Schur functions $g_\lambda^{(k)}$ and closed $k$-Schur Katalan functions $\fg{\lambda}{k}$, and prove
the following main results.

\begin{theorem}
Let $\lambda\in\pkl$ and $z\in[0,\bott_\lambda]$ with $\lambda_1>\cdots >\lambda_z$. Suppose
further $\lambda_{z}>\lambda_{z+1}$ when $z\neq \bott_\lambda$.
Then
\begin{equation}
\sg{\lambda}{z+1} = \sum_{\mu\in{\rm P}}b_{\lambda\mu}g_\mu^{(k)}, \quad
\text{where }\, (-1)^{|\lambda|-|\mu|}b_{\lambda\mu}\in\ZZ_{\geq0}.
\mlabel{eq:finalonlyneed}
\end{equation}
\mlabel{thm:aim}
\end{theorem}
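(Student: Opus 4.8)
The plan is to prove \eqref{eq:finalonlyneed} by induction on the weight parameter $z$. For $z=0$ the assertion is the identity $\sg{\lambda}{1}=g_\lambda^{(k)}$ built into Definition~\mref{def:semiKk}, so it holds with $b_{\lambda\lambda}=1$ and $b_{\lambda\mu}=0$ for $\mu\neq\lambda$. For the inductive step I would fix $z\in[1,\bott_\lambda]$ and $\lambda\in\pkl$ with $\lambda_1>\cdots>\lambda_z$, and $\lambda_z>\lambda_{z+1}$ when $z\neq\bott_\lambda$, and assume \eqref{eq:finalonlyneed} holds whenever the parameter $z$ there is replaced by $z-1$.

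The engine of the proof will be a \textbf{weight-lowering Katalan recursion}. Since $\sg{\lambda}{z+1}$ is the Katalan function attached to a root ideal obtained from the one defining $\sg{\lambda}{z}$ by ``closing'' one additional row, I would apply the Katalan formula of~\cite{BMS} repeatedly, peeling off the extra roots one at a time, each step contributing a single inhomogeneous correction of degree one lower, so as to arrive at an expansion
\[
\sg{\lambda}{z+1}\;=\;\sum_{\mu}c_{\lambda\mu}\,\sg{\mu}{z},\qquad (-1)^{\,|\lambda|-|\mu|}c_{\lambda\mu}\in\ZZ_{\ge 0},
\]
with $c_{\lambda\lambda}=1$ and every other $\mu$ obtained from $\lambda$ by an explicit ``down'' operation that leaves the rows of $\lambda$ above the row being closed unchanged. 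I expect the hypothesis $\lambda_z>\lambda_{z+1}$ (when $z\neq\bott_\lambda$) to be exactly what guarantees that closing that row does not interfere with the next, not-yet-closed row, so that the one-row recursion is valid and has alternating coefficients.

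Granting such a recursion, the remainder is formal. I would first check that every index $\mu$ occurring is admissible, i.e.\ that the pair $(\mu,z-1)$ satisfies the hypotheses of the theorem: since $\mu$ agrees with $\lambda$ above the closed row, $\mu_1>\cdots>\mu_{z-1}$ is inherited, and $\mu_{z-1}=\lambda_{z-1}>\lambda_z$ dominates the part $\mu_z$, so the only genuine work here is a short case analysis tracking how the ``down'' operations change the statistic $\bott$. The inductive hypothesis then gives $\sg{\mu}{z}=\sum_\nu b_{\mu\nu}g_\nu^{(k)}$ with $(-1)^{|\mu|-|\nu|}b_{\mu\nu}\in\ZZ_{\ge0}$, and substituting yields $\sg{\lambda}{z+1}=\sum_\nu\bigl(\sum_\mu c_{\lambda\mu}b_{\mu\nu}\bigr)g_\nu^{(k)}$; since $(-1)^{|\lambda|-|\mu|}(-1)^{|\mu|-|\nu|}=(-1)^{|\lambda|-|\nu|}$ and a product of nonnegative integers is nonnegative, the coefficient $b_{\lambda\nu}=\sum_\mu c_{\lambda\mu}b_{\mu\nu}$ has the sign required by \eqref{eq:finalonlyneed}. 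This closes the induction.

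\textbf{Main anticipated obstacle.} Everything past the recursion is bookkeeping with signs and degrees; the hard part will be establishing the recursion itself — showing both that its coefficients alternate and, above all, that the closing roots can be adjoined in an order for which every partial root ideal still defines a weighted $K$-$k$-Schur function indexed by a bona fide $k$-bounded partition, so that the recursion closes on the family $\{\sg{\mu}{z}\}$ rather than on arbitrary Katalan functions. This is where the strict-decrease hypotheses on $\lambda$ should be indispensable: without them one runs into the degenerate configurations behind the failure of the $k$-rectangle property for $g_\lambda^{(k)}$ observed by Takigiku~\cite{Ta19}, where the recursion leaves the class and positivity of the signed coefficients breaks down. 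Carrying out the combinatorics of these closures, and pinning down the precise ``down'' operations, is what I expect to be the technical core of the argument.
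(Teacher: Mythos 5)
Your proposal follows essentially the same route as the paper: induction on $z$ with base case $\sg{\lambda}{1}=g_\lambda^{(k)}$, a one-step weight-lowering recursion $\sg{\lambda}{z+1}=\sum_{\mu}c_{\lambda\mu}\,\sg{\mu}{z}$ with alternating coefficients and $\mu_x=\lambda_x$ for $x\in[z]$, verification that each $\mu$ inherits the strict-decrease hypotheses at weight $z-1$, and composition of signs. The recursion you flag as the main obstacle is precisely the paper's Proposition~\mref{prop:ind}, which is established by descending the bounce path of $\Delta^{k}(\lambda)$ from row $z$ using the removable-root identities of Lemma~\mref{lem:relk} and eliminating the terms indexed by non-partitions via the Mirror Lemma~\mref{lem:mirr1}.
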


As an immediate application, we can prove Conjecture~\mref{conj:aim}~\mref{it:conje} for $\lambda\in \spkl$.

\begin{theorem}
Let $\lambda\in\spkl$. Then the coefficients $b_{\lambda\mu}$ in
\begin{equation*}
\fg{\lambda}{k} = \sum_{\mu\in{\rm P}}b_{\lambda\mu}g_\mu^{(k)}
\end{equation*}
satisfy $(-1)^{|\lambda|-|\mu|} b_{\lambda\mu}\in\ZZ_{\geq0}$.
\mlabel{thm:aim1}
\end{theorem}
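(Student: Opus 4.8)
The plan is to obtain Theorem~\mref{thm:aim1} as a direct specialization of Theorem~\mref{thm:aim}, taking the weight parameter $z$ to be the maximal admissible value $\bott_\lambda$. First I would use the two boundary identities for the weighted $K$-$k$-Schur functions that follow from Definition~\mref{def:semiKk}: at the bottom of the range $\sg{\lambda}{1}=g_\lambda^{(k)}$, and at the top $\sg{\lambda}{\bott_\lambda+1}=\fg{\lambda}{k}$. In other words, the weighted family interpolates between the $K$-$k$-Schur function and the closed $k$-Schur Katalan function, so to expand $\fg{\lambda}{k}$ in the $g_\mu^{(k)}$ it suffices to expand $\sg{\lambda}{\bott_\lambda+1}$, which is exactly what Theorem~\mref{thm:aim} does once its hypotheses are verified for $z=\bott_\lambda$.

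Second, I would check that $\lambda\in\spkl$ forces $\lambda_1>\cdots>\lambda_{\bott_\lambda}$, which is the hypothesis ``$\lambda_1>\cdots>\lambda_z$'' of Theorem~\mref{thm:aim} with $z=\bott_\lambda$ (the auxiliary condition ``$\lambda_z>\lambda_{z+1}$ when $z\neq\bott_\lambda$'' being vacuous in this case). The elementary point is that $x\mapsto k-\lambda_x+x$ is strictly increasing, since $\lambda_{x+1}\le\lambda_x$ gives $k-\lambda_{x+1}+(x+1)\ge k-\lambda_x+x+1$; hence $\{x\in[\ell]\mid k-\lambda_x+x<\ell\}$ is an initial segment of $[\ell]$, which one checks coincides with $\{1,\dots,\bott_\lambda\}$. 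Therefore every $x$ with $1\le x\le\bott_\lambda$ satisfies $k-\lambda_x+x<\ell$, and the defining condition of $\spkl$ in~\meqref{eq:spkl} then yields $\lambda_{x-1}>\lambda_x$; with the convention $\lambda_0=\infty$ this is precisely $\lambda_1>\cdots>\lambda_{\bott_\lambda}$. In particular this also shows that the $k$-bounded strictly decreasing partitions of length $\ell$ lie in $\spkl$, recovering that special case.

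Third, applying Theorem~\mref{thm:aim} with $z=\bott_\lambda$ gives $\sg{\lambda}{\bott_\lambda+1}=\sum_{\mu\in{\rm P}}b_{\lambda\mu}g_\mu^{(k)}$ with $(-1)^{|\lambda|-|\mu|}b_{\lambda\mu}\in\ZZ_{\ge0}$, and substituting the boundary identity $\sg{\lambda}{\bott_\lambda+1}=\fg{\lambda}{k}$ completes the proof. The substantive content is entirely in Theorem~\mref{thm:aim}; the only genuine point of care here is the bookkeeping around $\bott_\lambda$ --- confirming that the top value of the weight parameter in the weighted $K$-$k$-Schur construction really does return $\fg{\lambda}{k}$, and that the strict-decrease range prescribed by the definition of $\spkl$ matches the range $\{1,\dots,\bott_\lambda\}$ exactly. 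Everything else is immediate.
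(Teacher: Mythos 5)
Your proposal is correct and follows essentially the same route as the paper: translate the defining condition of $\spkl$ into $\lambda_1>\cdots>\lambda_{\bott_\lambda}$ via the equivalence $x\in[\bott_\lambda]\Leftrightarrow k-\lambda_x+x<\ell$ (the paper isolates this as Lemma~\mref{lem:bl}, arguing directly from the root ideal $\dkl$, whereas you get it from the monotonicity of $x\mapsto k-\lambda_x+x$ — a cosmetic difference), then apply Theorem~\mref{thm:aim} at $z=\bott_\lambda$ and conclude with the boundary identity $\sg{\lambda}{\bott_\lambda+1}=\fg{\lambda}{k}$ from Proposition~\mref{prop:threeg}. No gaps.
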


\noindent{\bf Outline of the paper.}
In Section~\mref{sec:equiv}, we begin by recalling key background concepts and results on Katalan functions and their combinatorial interpretations, with particular emphasis on the Mirror Lemma. We then review the Katalan formula of the $K$-$k$-Schur function and the definition of the closed $k$-Schur Katalan function, and introduce the $K$-$k$-Schur function of weight $z$, denoted by $\sg{\lambda}{z}$ (Definition~\mref{def:semiKk}). We show that $\sg{\lambda}{z}$ coincides with the $K$-$k$-Schur function when $z = 1$, and with the closed $k$-Schur Katalan function when $z$ is sufficiently large (Proposition~\mref{prop:threeg}).

Section~\mref{sec:lower} focuses on expressing $K$-$k$-Schur function $\sg{\lambda}{z+1}$ of weight $z+1$ as a linear combination of $K$-$k$-Schur functions $\sg{\mu}{z}$ of lower weight $z$, as established in Propositions~\mref{prop:bigd},~\mref{prop:smalld} and~\mref{prop:ind}.
Finally, in Section~\mref{sec:comproof}, we present the detailed proofs of Theorems~\mref{thm:aim} and~\mref{thm:aim1},
which relies on Proposition~\mref{prop:ind} from Section~\mref{sec:lower}.

\vskip 0.1in

{\bf Notations.}
We fix some notations used throughout the paper.
\begin{enumerate}

\item For two integers $a,b$, let $[a,b]:=\{ a, a+1,\ldots,b \}$. We use the convention that $[a,b]=\emptyset$ if $a>b$. In particularly, denote $[b]:=[1,b]$ for any integer $b$.

\item Denote by $\epsilon_{i}$ the sequence of length $\ell$ with a $1$ in position $i$ and $0$'s elsewhere. Set $\varepsilon_{\alpha}:=\epsilon_{i}-\epsilon_{j}$ with $\alpha=(i,j)$.

\item
Let ${\rm P}$ be the set of partitions and ${\rm P}^{k}$ (resp. ${\rm P}_{\ell}^{k}$) be the set of $k$-bounded partitions (resp.  $k$-bounded partitions of length $\ell\in\mathbb{Z}_{\geq0}$) for $k\in \ZZ_{\geq 0}$.

\item For $\gamma := (\gamma_1,\ldots,\gamma_\ell)\in\ZZ^\ell$ with $\ell\in\ZZ_{\geq 0}$, define the size of $\gamma$ by $|\gamma|:= \sum_{i=1}^\ell\gamma_i$.


\item For $d\in \ZZ_{\geq 0}$, denote by $h_d$ the elementary complete homogeneous symmetric function.
\end{enumerate}

\section{Katalan functions and weighted $K$-$k$-Schur functions}
\mlabel{sec:equiv}
In this section, we first review the basic notation and fundamental properties of Katalan functions, with particular focus on the Mirror Lemma. We then recall the Katalan formula for the $K$-$k$-Schur function and the definition of the closed $k$-Schur Katalan function. Finally, we introduce a new subfamily of Katalan functions, called weighted $K$-$k$-Schur functions, which simultaneously generalize both $K$-$k$-Schur functions and closed $k$-Schur Katalan functions.

\subsection{Katalan functions}\mlabel{ss:Katalan}
This subsection is devoted to recalling the definition of Katalan functions and some key facts of them used in later arguments.

For $\ell\in\ZZ_{\geq 0}$, consider the set $$\Delta_{\ell}^{+}:=\{(i,j) \mid 1\le i<j\le\ell \},$$
with the convention $\Delta_{0}^{+}:=\Delta_{1}^{+} := \emptyset$.
A {\em root ideal} $\Psi$ is an upper order ideal of the poset $\Delta_{\ell}^{+}$ with partial order given by $(a,b)\le(c,d)$ if $a\ge c$ and $b\le d$. Namely, if $(a,b)\in\Psi$, then $(c,d)\in\Psi$ for $(c,d)\in\Delta_{\ell}^{+}$ with $(c,d)\ge(a,b)$.
Let $m\in\mathbb{Z}$ and $r\in\mathbb{Z}_{\geq 0}$.
Let $k_{m}^{(r)}$ be an inhomogeneous version of $h_m$ given by
\begin{equation*}
    k_{m}^{(r)} :=\sum_{i=0}^{m}\binom{r+i-1}{i}h_{m-i}.
\end{equation*}
Notice that $h_{m}$ is the monomial of highest degree of $k_{m}^{(r)}$.
Here we use the convention that
$$k_{m}^{(r)}=0\,\text{ for }\,  m<0 \,\text{ and }\, k_{m}^{(0)}=h_{m}.$$
For $\gamma\in\mathbb{Z}^{\ell}$, define $$g_{\gamma}:=\text{det}(k_{\gamma_{i}+j-i}^{(i-1)})_{1\le i,j\le\ell}.$$
If $\gamma$ is a partition, $g_{\gamma}$ is known as the {\em dual stable Grothendieck polynomial} studied in~\cite{Las01,Le}.
The following is the concept of Katalan function.

\begin{defn}(\cite[Definition~2.1]{BMS})
For a root ideal $\Psi\subseteq\Delta_{\ell}^{+}$, a multiset $M$ with $\text{supp}(M)\subseteq [\ell]$ and $\gamma\in\mathbb{Z}^{\ell}$, define the {\em Katalan function}
\begin{equation*}
K(\Psi;M;\gamma):=\prod_{z\in M}(1-L_{z})\prod_{(i,j)\in\Psi}(1-R_{ij})^{-1}g_{\gamma},
\end{equation*}
where $L_{z}$ is a {\em lowering operator} acting on $g_{\gamma}$ by $L_{z}g_{\gamma}:=g_{\gamma-\epsilon_{z}}$ and $R_{ij}$ is a {\em raising operator} acting on $g_{\gamma}$ by $R_{ij}g_{\gamma}:=g_{\gamma+\epsilon_{i}-\epsilon_{j}}$.
\mlabel{defn:KataFunc}
\end{defn}

%
%
%

Given a multiset $M$ on $[\ell]$ and $a\in M$, denote by $m_{M}(a)$ the number of occurrences of $a$ in $M$. A Katalan function $K(\Psi;M;\gamma)$ can be represented by the $\ell\times\ell$ grid of boxes (labelled by matrix-style coordinates), with the boxes of $\Psi$ shaded, $m_{M}(a)$ $\bullet$'s in column $a$ and the entries of $\gamma$ on the diagonal of these boxes. Let us expose an example for illustration.

\begin{exam}
For $\ell=4$, $\Psi=\{ (1,3),(1,4),(2,4) \}$, $M=\{2,3,4,4\}$ and $\gamma=(3,2,1,3)$, we represent the related Katalan function by
\begin{equation*}
K(\Psi;M;\gamma)=
\begin{tikzpicture}[scale=.4,line width=0.5pt,baseline=(a.base)]
\draw (0,2) rectangle (1,1);\node at(0.5,1.5){\scriptsize\( 3 \)};
\draw (1,2) rectangle (2,1);\node at(1.5,1.5){\scriptsize\( \bullet \)};
\filldraw[red,draw=black] (2,2) rectangle (3,1);\node at(2.5,1.5){\scriptsize\( \bullet \)};
\filldraw[red,draw=black] (3,2) rectangle (4,1);\node at(3.5,1.5){\scriptsize\( \bullet \)};
\draw (0,1) rectangle (1,0);
\draw (1,1) rectangle (2,0);\node at(1.5,0.5){\scriptsize\( 2 \)};
\draw (2,1) rectangle (3,0);
\filldraw[red,draw=black] (3,1) rectangle (4,0);\node at(3.5,0.5){\scriptsize\( \bullet \)};
\node (a) [align=center] {\\[-5pt] };
\draw (0,0) rectangle (1,-1);
\draw (1,0) rectangle (2,-1);
\draw (2,0) rectangle (3,-1);\node at(2.5,-0.5){\scriptsize\( 1 \)};
\draw (3,0) rectangle (4,-1);
\draw (0,-1) rectangle (1,-2);
\draw (1,-1) rectangle (2,-2);
\draw (2,-1) rectangle (3,-2);
\draw (3,-1) rectangle (4,-2);\node at(3.5,-1.5){\scriptsize\( 3 \)};
\end{tikzpicture}
\, .
\end{equation*}
\hspace*{\fill}$\square$
\end{exam}

In order to conveniently describe the variation of Katalan functions, we recall the following concepts in~\cite{BMPS19,BMS}.
Let $\Psi\subseteq\Delta_{\ell}^{+}$ be a root ideal and $x\in[\ell]$.
\begin{enumerate}
\item A root $\alpha\in\Psi$ is called {\em removable} if $\Psi\setminus \alpha$ is still a root ideal; a root $\alpha\in\Delta_{\ell}^{+}\setminus\Psi$ is called {\em addable} if $\Psi\cup\alpha$ is also a root ideal.

\item If there exists $j\in[\ell]$ such that $(x,j)$ is removable in $\Psi$, we denote $\text{down}_{\Psi}(x):=j$, else we say that $\text{down}_{\Psi}(x)$ is undefined; if there exists $j\in[\ell]$ such that $(j,x)$ is removable in $\Psi$, we denote $\text{up}_{\Psi}(x):=j$, else we say that $\text{up}_{\Psi}(x)$ is undefined.

\item The {\em bounce graph} of $\Psi$ is a graph on the vertex set $[\ell]$ with edges $(x,\text{down}_{\Psi}(x))$ for each $x\in[\ell]$ such that $\text{down}_{\Psi}(x)$ is defined. The bounce
    graph of $\Psi$ is a disjoint union of paths called {\em bounce paths} of $\Psi$.

\item Denote by $\text{bot}_{\Psi}(x)$ (resp. $\text{top}_{\Psi}(x)$) the maximum (resp. minimum) vertex, as a number in $[\ell]$, in the unique bounce path of $\Psi$ containing $x$.

\item For $a,b\in[\ell]$ in the same bounce path of $\Psi$ with $a\leq b$, define
\begin{align*}
\text{path}_{\Psi}(a,b):=(a,\text{down}_{\Psi}(a),
\text{down}_{\Psi}^{2}(a),\ldots,b).
\end{align*}
\end{enumerate}

\begin{defn}
A root ideal $\Psi\subseteq\Delta_{\ell}^{+}$ is said to have
\begin{enumerate}
\item a {\em wall} in rows $r, r+1$ if rows $r, r+1$ have the same length;

\item a {\em ceiling} in columns $c, c+1$ if columns $c, c+1$ have the same length;

\item a {\em mirror} in rows $r, r+1$ if $\Psi$ has removable roots $(r,c)$ and $(r+1,c+1)$ for some $c\in[r+2,\ell-1]$;

\item the {\em bottom} in row $\bott$ if $(\bott,p)\in\Psi$ for some $p\in [\ell]$ and $(\bott+1,q)\notin\Psi$ for all $q\in [\ell]$.
\end{enumerate}
\mlabel{def:com}
\end{defn}

Now we recall some essential properties of Katalan functions learned in~\cite{BMS}, especially the Mirror Lemma.

\begin{lemma}(\cite[p.18]{BMS})
For a root ideal $\Psi\subseteq\Delta_{\ell}^{+}$, a multiset $M$ with $\text{supp}(M)\subseteq [\ell]$, $\gamma\in\mathbb{Z}^{\ell}$ and $z\in[\ell]$,
$$L_z K(\Psi;M;\gamma)=K(\Psi;M;\gamma-\epsilon_z).$$
\mlabel{lem:LK}
\end{lemma}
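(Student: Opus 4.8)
The plan is to exploit the fact that both the lowering operators $L_w$ and the raising operators $R_{ij}$ act purely on the index $\gamma\in\ZZ^{\ell}$ of the symbol $g_\gamma$, by translation: $L_w$ subtracts $\epsilon_w$ and $R_{ij}$ adds $\epsilon_i-\epsilon_j$. Since translations of the lattice $\ZZ^{\ell}$ commute with one another, the single operator $L_z$ of the statement commutes with every factor $(1-L_w)$ occurring in $\prod_{w\in M}(1-L_w)$ (here I rename the bound index of the product from $z$ to $w$ to avoid a clash with the distinguished $z\in[\ell]$), and it also commutes with every factor $(1-R_{ij})^{-1}=\sum_{n\ge 0}R_{ij}^{\,n}$ in $\prod_{(i,j)\in\Psi}(1-R_{ij})^{-1}$, the latter being a formal power series in the single commuting operator $R_{ij}$.

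First I would make this commutativity precise on the free module spanned by the symbols $\{g_\beta\mid \beta\in\ZZ^{\ell}\}$, on which $L_z$, $L_w$, and $R_{ij}$ are the evident translation maps; commutativity is then manifest, since composing translations adds the associated shift vectors irrespective of order. Next I would slide $L_z$ to the right past the entire operator product,
\[
L_z\,\prod_{w\in M}(1-L_w)\prod_{(i,j)\in\Psi}(1-R_{ij})^{-1}
=\prod_{w\in M}(1-L_w)\prod_{(i,j)\in\Psi}(1-R_{ij})^{-1}\,L_z,
\]
and let the rightmost $L_z$ act directly on $g_\gamma$, giving $L_z g_\gamma = g_{\gamma-\epsilon_z}$. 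Applying the remaining operator product to $g_{\gamma-\epsilon_z}$ then reproduces exactly $K(\Psi;M;\gamma-\epsilon_z)$ by Definition~\mref{defn:KataFunc}, which is the asserted identity.

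Equivalently, and perhaps more transparently, I would expand the operator product as a locally finite sum of translations, writing $K(\Psi;M;\gamma)=\sum_{\delta\in\ZZ^{\ell}}a_\delta\,g_{\gamma+\delta}$ with integer coefficients $a_\delta$ that depend only on $\Psi$ and $M$, not on $\gamma$. Applying $L_z$ term-by-term gives $\sum_{\delta}a_\delta\,g_{\gamma+\delta-\epsilon_z}$, which is visibly the result of substituting $\gamma-\epsilon_z$ for $\gamma$ in the same expansion, namely $K(\Psi;M;\gamma-\epsilon_z)$. The one point that genuinely requires care---and the step I expect to be the main (albeit modest) obstacle---is the well-definedness of the infinite product $\prod_{(i,j)\in\Psi}(1-R_{ij})^{-1}$ as an operator, which is what legitimizes both the rearrangement above and the term-by-term application of $L_z$; this is ensured by the standard completion in which Katalan functions are defined, where only finitely many compositions of $L$'s and $R$'s contribute to the coefficient of any fixed $g_\beta$.
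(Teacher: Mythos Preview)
Your argument is correct and is exactly the standard justification for this identity: the operators $L_z$, $L_w$, and $R_{ij}$ all act by translation of the index in the free module on the symbols $g_\beta$, hence commute, and sliding $L_z$ past the (locally finite) operator product yields the claim. The paper itself does not supply a proof here; it simply records the statement with a citation to \cite[p.~18]{BMS}, so your write-up actually provides more detail than the paper does.
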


\begin{lemma}(\cite[Proposition~3.9]{BMS})
Let $\Psi\subseteq\Delta_{\ell}^{+}$ be a root ideal, $M$ a multiset with \textup{supp}$(M)\subseteq [\ell]$ and $\gamma\in\mathbb{Z}^{\ell}$. Then
\begin{enumerate}
\item for any removable root $\beta$ of $\Psi$,
\begin{equation*}
K(\Psi;M;\gamma)=K(\Psi\setminus\beta;M;\gamma)+K(\Psi;M;\gamma+\varepsilon_{\beta});
\end{equation*}
\mlabel{it:relk1}

\item for any addable root $\alpha$ of $\Psi$,
\begin{equation*}
K(\Psi;M;\gamma)=K(\Psi\cup\alpha;M;\gamma)
-K(\Psi\cup\alpha;M;\gamma+\varepsilon_{\alpha});
\end{equation*}
\mlabel{it:relk2}

\item for any $m\in M$,
\begin{equation*}
K(\Psi;M;\gamma)=K(\Psi;M\setminus m;\gamma)-K(\Psi;M\setminus m;\gamma-\epsilon_{m}),
\end{equation*}
where $M\setminus m$ means removing only one element $m$ from $M$;
\mlabel{it:relk3}

\item for any $m\in[\ell]$,
\begin{equation*}
K(\Psi;M;\gamma)=K(\Psi;M\sqcup m;\gamma)+K(\Psi;M;\gamma-\epsilon_{m}).
\end{equation*}
\mlabel{it:relk4}
\end{enumerate}
\mlabel{lem:relk}
\end{lemma}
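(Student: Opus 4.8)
The plan is to prove all four identities uniformly, as consequences of a single structural fact: the lowering operators $L_z$ and raising operators $R_{ij}$ are all \emph{translation operators} on the index lattice $\ZZ^{\ell}$, sending $g_\gamma\mapsto g_{\gamma-\epsilon_z}$ and $g_\gamma\mapsto g_{\gamma+\varepsilon_{ij}}$ respectively, and hence they pairwise commute. Consequently the generating operator $\prod_{z\in M}(1-L_z)\prod_{(i,j)\in\Psi}(1-R_{ij})^{-1}$ lives in a commutative algebra, and each of the four relations will drop out by isolating one factor of this product, applying an elementary identity in that commutative algebra, and then evaluating at $g_\gamma$.

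Before that, I would address well-definedness, which is the only genuinely non-formal ingredient. First I would record that $g_\gamma=0$ whenever some row of the defining determinant $\det(k_{\gamma_i+t-i}^{(i-1)})_{1\le i,t\le\ell}$ vanishes identically; concretely, since $k_m^{(r)}=0$ for $m<0$, the entries of row $j$ are $k_{\gamma_j+t-j}^{(j-1)}$ whose largest subscript is $\gamma_j+\ell-j$, so $g_\gamma=0$ as soon as $\gamma_j+\ell-j<0$ for some $j$. Because repeated application of a single raising operator $R_{ij}$ drives $\gamma_j\to-\infty$, the geometric expansion $(1-R_{ij})^{-1}g_\mu=\sum_{n\ge 0}g_{\mu+n\varepsilon_{ij}}$ is a \emph{finite} sum, and iterating over the finitely many roots of $\Psi$ shows $\prod_{(i,j)\in\Psi}(1-R_{ij})^{-1}g_\gamma$ is a finite $\ZZ$-linear combination of $g_\mu$'s. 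This truncation is exactly what lets me treat $\prod_{(i,j)\in\Psi}(1-R_{ij})^{-1}$ as an honest element of the commutative operator algebra and manipulate the formal series identities below without convergence concerns.

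With this in hand the four parts are short. For \mref{it:relk1}, writing $\beta$ for the removable root (removability guarantees $\Psi\setminus\beta$ is again a root ideal), I would factor $\prod_{(i,j)\in\Psi}(1-R_{ij})^{-1}=(1-R_\beta)^{-1}\prod_{(i,j)\in\Psi\setminus\beta}(1-R_{ij})^{-1}$ and substitute $(1-R_\beta)^{-1}=1+R_\beta(1-R_\beta)^{-1}$; applying $\prod_{z\in M}(1-L_z)$, evaluating at $g_\gamma$, and using commutativity to slide the leftover $R_\beta$ onto $g_\gamma$ (so that $R_\beta g_\gamma=g_{\gamma+\varepsilon_\beta}$) yields exactly $K(\Psi\setminus\beta;M;\gamma)+K(\Psi;M;\gamma+\varepsilon_\beta)$. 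Part \mref{it:relk2} is the mirror computation: for an addable $\alpha$ (so $\Psi\cup\alpha$ is a root ideal) I would factor $\prod_{(i,j)\in\Psi}(1-R_{ij})^{-1}=(1-R_\alpha)\prod_{(i,j)\in\Psi\cup\alpha}(1-R_{ij})^{-1}$ and expand $(1-R_\alpha)X=X-R_\alpha X$, giving $K(\Psi\cup\alpha;M;\gamma)-K(\Psi\cup\alpha;M;\gamma+\varepsilon_\alpha)$. Parts \mref{it:relk3} and \mref{it:relk4} are the lowering-operator analogues, now requiring no root-ideal hypothesis: for $m\in M$, splitting off $(1-L_m)$ from $\prod_{z\in M}(1-L_z)$ and expanding $(1-L_m)=1-L_m$ gives \mref{it:relk3} via $L_m g_\gamma=g_{\gamma-\epsilon_m}$; for $m\in[\ell]$, observing $\prod_{z\in M\sqcup m}(1-L_z)=(1-L_m)\prod_{z\in M}(1-L_z)$ and invoking Lemma~\mref{lem:LK} to identify $L_m K(\Psi;M;\gamma)=K(\Psi;M;\gamma-\epsilon_m)$ rearranges to \mref{it:relk4}.

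The main obstacle is not any single identity---each is a two-line manipulation once commutativity and the factorizations are available---but rather the careful justification that the infinite product $\prod_{(i,j)\in\Psi}(1-R_{ij})^{-1}$ may be handled as a finite operator, and that the removable/addable hypotheses are exactly what keep both sides of \mref{it:relk1} and \mref{it:relk2} legitimate Katalan functions. I would therefore spend most of the write-up making the truncation argument of the second paragraph precise---tracking how the root-ideal structure forces the sink coordinates of $\gamma+\sum n_{ij}\varepsilon_{ij}$ negative and hence bounds the multi-index $(n_{ij})$---after which the four evaluations are routine.
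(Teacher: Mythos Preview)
The paper does not give its own proof of this lemma; it simply cites \cite[Proposition~3.9]{BMS}. Your argument is correct and is essentially the standard one: the four identities are formal consequences of the commutativity of the translation operators $L_z$ and $R_{ij}$, combined with the elementary identities $(1-x)^{-1}=1+x(1-x)^{-1}$ and $1=(1-x)+x$ applied to the relevant factor of the defining product, once one knows the geometric expansions terminate. Your truncation argument via $g_\gamma=0$ when $\gamma_j+\ell-j<0$ is the right justification for that last point. One small remark: the truncation itself does not require $\Psi$ to be a root ideal (any finite set of pairs $(i,j)$ with $i<j$ would do), so you need not invoke the root-ideal structure there; the removable/addable hypotheses are needed only so that $\Psi\setminus\beta$ and $\Psi\cup\alpha$ remain root ideals and the right-hand sides of \mref{it:relk1} and \mref{it:relk2} are again Katalan functions as defined.
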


\begin{lemma}({\bf Mirror Lemma}, \cite[Lemma~4.6]{BMS})
Let $\Psi\subseteq\Delta_{\ell}^{+}$ be a root ideal, $M$ a multiset with \textup{supp}$(M)\subseteq [\ell]$ and $\gamma\in\mathbb{Z}^{\ell}$. Let $1\le y\le z<\ell$ be indices in a same bounce path  of $\Psi$ satisfying
\begin{enumerate}
\item $\Psi$ has a ceiling in columns $y,y+1$;
\item $\Psi$ has a mirror in rows $x,x+1$ for each $x\in\textup{path}_{\Psi}(y,\textup{up}_{\Psi}(z))$;
\item $\Psi$ has a wall in rows $z,z+1$;
\item $\gamma_{x}=\gamma_{x+1}$ for all $x\in\textup{path}_{\Psi}(y,\textup{up}_{\Psi}(z))$;
\item $\gamma_{z}+1=\gamma_{z+1}$;
\item $m_{M}(x)+1=m_{M}(x+1)$ for all $x\in\textup{path}_{\Psi}(\textup{down}_{\Psi}(y),z)$.
\end{enumerate}
If $m_{M}(y)+1=m_{M}(y+1)$, then $K(\Psi;M;\gamma)=0$; if $m_{M}(y)=m_{M}(y+1)$, then $K(\Psi;M;\gamma)=K(\Psi;M;\gamma-\epsilon_{z+1})$.
\mlabel{lem:mirr1}
\end{lemma}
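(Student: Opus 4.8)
The plan is to prove the two alternatives together by induction on the length of $\textup{path}_\Psi(y,\textup{up}_\Psi(z))$, that is, on the number of down-steps from $y$ to $z$, using only the four structural relations of Lemma~\mref{lem:relk} and the lowering-operator identity of Lemma~\mref{lem:LK}. The first observation is that the two conclusions are interderivable, so it suffices to control one and transport it. Relation~\mref{lem:relk}~(\mref{it:relk4}) with $m=z+1$ gives $K(\Psi;M;\gamma)-K(\Psi;M;\gamma-\epsilon_{z+1})=K(\Psi;M\sqcup(z+1);\gamma)$, so the shift alternative is equivalent to the vanishing of $K(\Psi;M\sqcup(z+1);\gamma)$; conversely relation~\mref{lem:relk}~(\mref{it:relk3}) with $m=z+1$ writes a vanishing Katalan function as the difference of two Katalan functions governed by the shift alternative. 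I would use this to package the statement as a single claim about vanishing and read off the shift conclusion at the end, with Lemma~\mref{lem:LK} used throughout to interpret content shifts $\gamma\mapsto\gamma-\epsilon_{z+1}$.

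\emph{Base case $y=z$.} Here conditions (2),(4),(6) are vacuous, so only the local data at rows and columns $z,z+1$ matter: a ceiling in columns $z,z+1$, a wall in rows $z,z+1$, the degenerate content relation $\gamma_z+1=\gamma_{z+1}$, and $z,z+1$ in one bounce path. The engine is the degenerate straightening identity for the underlying $g$: when $\gamma_z+1=\gamma_{z+1}$, the subscripts in rows $z$ and $z+1$ of $g_\gamma=\det(k^{(i-1)}_{\gamma_i+j-i})$ coincide, so subtracting row $z$ from row $z+1$ and using $k^{(z)}_m=k^{(z-1)}_m+k^{(z)}_{m-1}$ yields $g_\gamma=g_{\gamma-\epsilon_{z+1}}$. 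The wall and ceiling equalize the relevant row and column lengths at $z,z+1$, and a balanced multiset equalizes the $\bullet$-counts there, so this degeneracy is respected by the operators $(1-R_{ij})^{-1}$ and the factors $(1-L_w)$ appearing in $K$; applying Lemma~\mref{lem:relk}~(\mref{it:relk3}) (resp.\ (\mref{it:relk4})) then produces the vanishing (resp.\ the single-box shift $\gamma\mapsto\gamma-\epsilon_{z+1}$), with the hypothesis $m_M(z)+1=m_M(z+1)$ versus $m_M(z)=m_M(z+1)$ deciding which alternative occurs.

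\emph{Inductive step.} Assume the statement for all shorter path-segments and let $y<z$, writing $y'=\textup{down}_\Psi(y)$. The mirror in rows $y,y+1$ furnishes removable roots $(y,y')$ and $(y+1,y'+1)$, while the ceiling sits in columns $y,y+1$ and $\gamma_y=\gamma_{y+1}$. I would apply relation~\mref{lem:relk}~(\mref{it:relk1}) at these two mirror roots, together with relation~\mref{lem:relk}~(\mref{it:relk3}) to rebalance the multiset in columns $y,y+1$ using the top multiplicity hypothesis, to rewrite $K(\Psi;M;\gamma)$ as a single Katalan function whose data form a mirror configuration with top $y'$: the ceiling migrates to columns $y',y'+1$ (removing the two mirror roots equalizes those column lengths), condition~(4) at $y$ is consumed, and condition~(6) at $x=y'$ supplies the content and multiplicity relation needed at the new top. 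The inductive hypothesis then applies to the reduced configuration, and the shift conclusion is recovered via the opening reduction. The decisive point is that every correction term produced along the way vanishes: each is a Katalan function in which a raising operator has pushed the content so that a fresh wall/ceiling degeneracy appears, or in which the multiset now violates balance across a mirror, and such terms are killed directly or by a strictly shorter instance of the lemma.

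The main obstacle is exactly the control of these correction terms. One must verify, root by root, that removing the mirror pair creates no unexpected addable or removable roots elsewhere, that the migrated ceiling together with the surviving $\gamma$- and $M$-conditions are \emph{precisely} hypotheses (1)--(6) for the data with top $y'$, and that each leftover term falls under a previously established (shorter) case rather than a genuinely new configuration. To make the reduction self-contained I expect to strengthen the inductive statement---proving the vanishing and shift clauses simultaneously for a whole family of contents along the path rather than for a single $\gamma$---so that each correction term is itself an instance of the strengthened hypothesis. The interaction at the very top, where the single differing multiplicity condition at $y$ selects between the two conclusions, is where the bookkeeping is most delicate and must be organized so that the two alternatives propagate consistently down the path to the wall at $z,z+1$.
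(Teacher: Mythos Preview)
The paper does not prove this lemma: it is quoted verbatim as \cite[Lemma~4.6]{BMS} and used as a black box, so there is no proof in the present paper to compare your proposal against. Your inductive strategy---reducing the two alternatives to a single vanishing statement via Lemma~\mref{lem:relk}~(\mref{it:relk3})/(\mref{it:relk4}), handling the base case $y=z$ by a local straightening at rows $z,z+1$, and in the inductive step peeling off the mirror pair at $(y,y')$, $(y+1,y'+1)$ to migrate the ceiling down to columns $y',y'+1$---is the right shape and is in the spirit of the argument in \cite{BMS}. The honest caveat you flag yourself is the real work: one must track carefully that after removing the mirror roots and rebalancing $M$, the resulting triple $(\Psi',M',\gamma')$ satisfies \emph{exactly} conditions (1)--(6) with top index $y'$, and that every side term produced by Lemma~\mref{lem:relk}~(\mref{it:relk1}) is an instance of a strictly shorter case; your sketch asserts this but does not verify it, so as written it is a plausible plan rather than a proof.
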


To make it clear, the Mirror Lemma can be exhibited by combinations as the following example.

\begin{exam}
By Lemma~\mref{lem:mirr1}, the following Katalan functions satisfy
\begin{equation*}
K(\Psi;M_1;\gamma):=
\begin{tikzpicture}[scale=.3,line width=0.5pt,baseline=(a.base)]
\draw (0,7) rectangle (1,6);\node at(0.5,6.5){\tiny\( 7 \)};
\draw (1,7) rectangle (2,6);
\filldraw[red,draw=black] (2,7) rectangle (3,6);\node at(2.5,6.5){\tiny\( \bullet \)};
\filldraw[red,draw=black] (3,7) rectangle (4,6);\node at(3.5,6.5){\tiny\( \bullet \)};
\filldraw[red,draw=black] (4,7) rectangle (5,6);\node at(4.5,6.5){\tiny\( \bullet \)};
\filldraw[red,draw=black] (5,7) rectangle (6,6);\node at(5.5,6.5){\tiny\( \bullet \)};
\filldraw[red,draw=black] (6,7) rectangle (7,6);\node at(6.5,6.5){\tiny\( \bullet \)};
\filldraw[red,draw=black] (7,7) rectangle (8,6);\node at(7.5,6.5){\tiny\( \bullet \)};
\filldraw[red,draw=black] (8,7) rectangle (9,6);\node at(8.5,6.5){\tiny\( \bullet \)};
\filldraw[red,draw=black] (9,7) rectangle (10,6);\node at(9.5,6.5){\tiny\( \bullet \)};
\filldraw[red,draw=black] (10,7) rectangle (11,6);\node at(10.5,6.5){\tiny\( \bullet \)};
\filldraw[red,draw=black] (11,7) rectangle (12,6);\node at(11.5,6.5){\tiny\( \bullet \)};
\filldraw[red,draw=black] (12,7) rectangle (13,6);\node at(12.5,6.5){\tiny\( \bullet \)};
\filldraw[red,draw=black] (13,7) rectangle (14,6);\node at(13.5,6.5){\tiny\( \bullet \)};
\filldraw[red,draw=black] (14,7) rectangle (15,6);\node at(14.5,6.5){\tiny\( \bullet \)};
\draw (0,6) rectangle (1,5);
\draw (1,6) rectangle (2,5);\node at(1.5,5.5){\tiny\( 7 \)};
\draw (2,6) rectangle (3,5);
\filldraw[yellow,draw=black] (3,6) rectangle (4,5);
\filldraw[yellow,draw=black] (4,6) rectangle (5,5);\node at(4.5,5.5){\tiny\( \bullet \)};
\filldraw[red,draw=black] (5,6) rectangle (6,5);\node at(5.5,5.5){\tiny\( \bullet \)};
\filldraw[red,draw=black] (6,6) rectangle (7,5);\node at(6.5,5.5){\tiny\( \bullet \)};
\filldraw[red,draw=black] (7,6) rectangle (8,5);\node at(7.5,5.5){\tiny\( \bullet \)};
\filldraw[red,draw=black] (8,6) rectangle (9,5);\node at(8.5,5.5){\tiny\( \bullet \)};
\filldraw[red,draw=black] (9,6) rectangle (10,5);\node at(9.5,5.5){\tiny\( \bullet \)};
\filldraw[red,draw=black] (10,6) rectangle (11,5);\node at(10.5,5.5){\tiny\( \bullet \)};
\filldraw[red,draw=black] (11,6) rectangle (12,5);\node at(11.5,5.5){\tiny\( \bullet \)};
\filldraw[red,draw=black] (12,6) rectangle (13,5);\node at(12.5,5.5){\tiny\( \bullet \)};
\filldraw[red,draw=black] (13,6) rectangle (14,5);\node at(13.5,5.5){\tiny\( \bullet \)};
\filldraw[red,draw=black] (14,6) rectangle (15,5);\node at(14.5,5.5){\tiny\( \bullet \)};
\draw (0,5) rectangle (1,4);
\draw (1,5) rectangle (2,4);
\draw (2,5) rectangle (3,4);\node at(2.5,4.5){\tiny\( 6 \)};
\draw (3,5) rectangle (4,4);
\draw (4,5) rectangle (5,4);
\filldraw[red,draw=black] (5,5) rectangle (6,4);\node at(5.5,4.5){\tiny\( \bullet \)};
\filldraw[red,draw=black] (6,5) rectangle (7,4);\node at(6.5,4.5){\tiny\( \bullet \)};
\filldraw[red,draw=black] (7,5) rectangle (8,4);\node at(7.5,4.5){\tiny\( \bullet \)};
\filldraw[red,draw=black] (8,5) rectangle (9,4);\node at(8.5,4.5){\tiny\( \bullet \)};
\filldraw[red,draw=black] (9,5) rectangle (10,4);\node at(9.5,4.5){\tiny\( \bullet \)};
\filldraw[red,draw=black] (10,5) rectangle (11,4);\node at(10.5,4.5){\tiny\( \bullet \)};
\filldraw[red,draw=black] (11,5) rectangle (12,4);\node at(11.5,4.5){\tiny\( \bullet \)};
\filldraw[red,draw=black] (12,5) rectangle (13,4);\node at(12.5,4.5){\tiny\( \bullet \)};
\filldraw[red,draw=black] (13,5) rectangle (14,4);\node at(13.5,4.5){\tiny\( \bullet \)};
\filldraw[red,draw=black] (14,5) rectangle (15,4);\node at(14.5,4.5){\tiny\( \bullet \)};
\draw (0,4) rectangle (1,3);
\draw (1,4) rectangle (2,3);
\draw (2,4) rectangle (3,3);
\draw (3,4) rectangle (4,3);\node at(3.5,3.5){\tiny\( \textcolor{red}{7} \)};
\draw (4,4) rectangle (5,3);
\filldraw[green,draw=black] (5,4) rectangle (6,3);\node at(5.5,3.5){\tiny\( \bullet \)};
\filldraw[red,draw=black] (6,4) rectangle (7,3);\node at(6.5,3.5){\tiny\( \bullet \)};
\filldraw[red,draw=black] (7,4) rectangle (8,3);\node at(7.5,3.5){\tiny\( \bullet \)};
\filldraw[red,draw=black] (8,4) rectangle (9,3);\node at(8.5,3.5){\tiny\( \bullet \)};
\filldraw[red,draw=black] (9,4) rectangle (10,3);\node at(9.5,3.5){\tiny\( \bullet \)};
\filldraw[red,draw=black] (10,4) rectangle (11,3);\node at(10.5,3.5){\tiny\( \bullet \)};
\filldraw[red,draw=black] (11,4) rectangle (12,3);\node at(11.5,3.5){\tiny\( \bullet \)};
\filldraw[red,draw=black] (12,4) rectangle (13,3);\node at(12.5,3.5){\tiny\( \bullet \)};
\filldraw[red,draw=black] (13,4) rectangle (14,3);\node at(13.5,3.5){\tiny\( \bullet \)};
\filldraw[red,draw=black] (14,4) rectangle (15,3);\node at(14.5,3.5){\tiny\( \bullet \)};
\draw (0,3) rectangle (1,2);
\draw (1,3) rectangle (2,2);
\draw (2,3) rectangle (3,2);
\draw (3,3) rectangle (4,2);
\draw (4,3) rectangle (5,2);\node at(4.5,2.5){\tiny\( \textcolor{red}{7} \)};
\draw (5,3) rectangle (6,2);
\filldraw[green,draw=black] (6,3) rectangle (7,2);\node at(6.5,2.5){\tiny\( \bullet \)};
\filldraw[red,draw=black] (7,3) rectangle (8,2);\node at(7.5,2.5){\tiny\( \bullet \)};
\filldraw[red,draw=black] (8,3) rectangle (9,2);\node at(8.5,2.5){\tiny\( \bullet \)};
\filldraw[red,draw=black] (9,3) rectangle (10,2);\node at(9.5,2.5){\tiny\( \bullet \)};
\filldraw[red,draw=black] (10,3) rectangle (11,2);\node at(10.5,2.5){\tiny\( \bullet \)};
\filldraw[red,draw=black] (11,3) rectangle (12,2);\node at(11.5,2.5){\tiny\( \bullet \)};
\filldraw[red,draw=black] (12,3) rectangle (13,2);\node at(12.5,2.5){\tiny\( \bullet \)};
\filldraw[red,draw=black] (13,3) rectangle (14,2);\node at(13.5,2.5){\tiny\( \bullet \)};
\filldraw[red,draw=black] (14,3) rectangle (15,2);\node at(14.5,2.5){\tiny\( \bullet \)};
\draw (0,2) rectangle (1,1);
\draw (1,2) rectangle (2,1);
\draw (2,2) rectangle (3,1);
\draw (3,2) rectangle (4,1);
\draw (4,2) rectangle (5,1);
\draw (5,2) rectangle (6,1);\node at(5.5,1.5){\tiny\( \textcolor{red}{5} \)};
\draw (6,2) rectangle (7,1);
\draw (7,2) rectangle (8,1);
\draw (8,2) rectangle (9,1);
\filldraw[green,draw=black] (9,2) rectangle (10,1);\node at(9.5,1.5){\tiny\( \bullet \)};
\filldraw[red,draw=black] (10,2) rectangle (11,1);\node at(10.5,1.5){\tiny\( \bullet \)};
\filldraw[red,draw=black] (11,2) rectangle (12,1);\node at(11.5,1.5){\tiny\( \bullet \)};
\filldraw[red,draw=black] (12,2) rectangle (13,1);\node at(12.5,1.5){\tiny\( \bullet \)};
\filldraw[red,draw=black] (13,2) rectangle (14,1);\node at(13.5,1.5){\tiny\( \bullet \)};
\filldraw[red,draw=black] (14,2) rectangle (15,1);\node at(14.5,1.5){\tiny\( \bullet \)};
\draw (0,1) rectangle (1,0);
\draw (1,1) rectangle (2,0);
\draw (2,1) rectangle (3,0);
\draw (3,1) rectangle (4,0);
\draw (4,1) rectangle (5,0);
\draw (5,1) rectangle (6,0);
\draw (6,1) rectangle (7,0);\node at(6.5,0.5){\tiny\( \textcolor{red}{5} \)};
\draw (7,1) rectangle (8,0);
\draw (8,1) rectangle (9,0);
\draw (9,1) rectangle (10,0);
\filldraw[green,draw=black] (10,1) rectangle (11,0);\node at(10.5,0.5){\tiny\( \bullet \)};
\filldraw[red,draw=black] (11,1) rectangle (12,0);\node at(11.5,0.5){\tiny\( \bullet \)};
\filldraw[red,draw=black] (12,1) rectangle (13,0);\node at(12.5,0.5){\tiny\( \bullet \)};
\filldraw[red,draw=black] (13,1) rectangle (14,0);\node at(13.5,0.5){\tiny\( \bullet \)};
\filldraw[red,draw=black] (14,1) rectangle (15,0);\node at(14.5,0.5){\tiny\( \bullet \)};
\draw (0,0) rectangle (1,-1);
\draw (1,0) rectangle (2,-1);
\draw (2,0) rectangle (3,-1);
\draw (3,0) rectangle (4,-1);
\draw (4,0) rectangle (5,-1);
\draw (5,0) rectangle (6,-1);
\draw (6,0) rectangle (7,-1);
\draw (7,0) rectangle (8,-1);\node at(7.5,-0.5){\tiny\( 4 \)};
\draw (8,0) rectangle (9,-1);
\draw (9,0) rectangle (10,-1);
\draw (10,0) rectangle (11,-1);
\draw (11,0) rectangle (12,-1);
\filldraw[red,draw=black] (12,0) rectangle (13,-1);\node at(12.5,-0.5){\tiny\( \bullet \)};
\filldraw[red,draw=black] (13,0) rectangle (14,-1);\node at(13.5,-0.5){\tiny\( \bullet \)};
\filldraw[red,draw=black] (14,0) rectangle (15,-1);\node at(14.5,-0.5){\tiny\( \bullet \)};
\draw (0,-1) rectangle (1,-2);
\draw (1,-1) rectangle (2,-2);
\draw (2,-1) rectangle (3,-2);
\draw (3,-1) rectangle (4,-2);
\draw (4,-1) rectangle (5,-2);
\draw (5,-1) rectangle (6,-2);
\draw (6,-1) rectangle (7,-2);
\draw (7,-1) rectangle (8,-2);
\draw (8,-1) rectangle (9,-2);\node at(8.5,-1.5){\tiny\( 4 \)};
\draw (9,-1) rectangle (10,-2);
\draw (10,-1) rectangle (11,-2);
\draw (11,-1) rectangle (12,-2);
\draw (12,-1) rectangle (13,-2);
\filldraw[red,draw=black] (13,-1) rectangle (14,-2);\node at(13.5,-1.5){\tiny\( \bullet \)};
\filldraw[red,draw=black] (14,-1) rectangle (15,-2);\node at(14.5,-1.5){\tiny\( \bullet \)};
\draw (0,-2) rectangle (1,-3);
\draw (1,-2) rectangle (2,-3);
\draw (2,-2) rectangle (3,-3);
\draw (3,-2) rectangle (4,-3);
\draw (4,-2) rectangle (5,-3);
\draw (5,-2) rectangle (6,-3);
\draw (6,-2) rectangle (7,-3);
\draw (7,-2) rectangle (8,-3);
\draw (8,-2) rectangle (9,-3);
\draw (9,-2) rectangle (10,-3);\node at(9.5,-2.5){\tiny\( \textcolor{red}{4} \)};
\draw (10,-2) rectangle (11,-3);
\draw (11,-2) rectangle (12,-3);
\draw (12,-2) rectangle (13,-3);
\draw (13,-2) rectangle (14,-3);
\filldraw[blue!50,draw=black] (14,-2) rectangle (15,-3);\node at(14.5,-2.5){\tiny\( \bullet \)};
\draw (0,-3) rectangle (1,-4);
\draw (1,-3) rectangle (2,-4);
\draw (2,-3) rectangle (3,-4);
\draw (3,-3) rectangle (4,-4);
\draw (4,-3) rectangle (5,-4);
\draw (5,-3) rectangle (6,-4);
\draw (6,-3) rectangle (7,-4);
\draw (7,-3) rectangle (8,-4);
\draw (8,-3) rectangle (9,-4);
\draw (9,-3) rectangle (10,-4);
\draw (10,-3) rectangle (11,-4);\node at(10.5,-3.5){\tiny\( \textcolor{red}{5} \)};
\draw (11,-3) rectangle (12,-4);
\draw (12,-3) rectangle (13,-4);
\draw (13,-3) rectangle (14,-4);
\filldraw[blue!50,draw=black] (14,-3) rectangle (15,-4);\node at(14.5,-3.5){\tiny\( \bullet \)};
\draw (0,-4) rectangle (1,-5);
\draw (1,-4) rectangle (2,-5);
\draw (2,-4) rectangle (3,-5);
\draw (3,-4) rectangle (4,-5);
\draw (4,-4) rectangle (5,-5);
\draw (5,-4) rectangle (6,-5);
\draw (6,-4) rectangle (7,-5);
\draw (7,-4) rectangle (8,-5);
\draw (8,-4) rectangle (9,-5);
\draw (9,-4) rectangle (10,-5);
\draw (10,-4) rectangle (11,-5);
\draw (11,-4) rectangle (12,-5);\node at(11.5,-4.5){\tiny\( 3 \)};
\draw (12,-4) rectangle (13,-5);
\draw (13,-4) rectangle (14,-5);
\draw (14,-4) rectangle (15,-5);
\draw (0,-5) rectangle (1,-6);
\draw (1,-5) rectangle (2,-6);
\draw (2,-5) rectangle (3,-6);
\draw (3,-5) rectangle (4,-6);
\draw (4,-5) rectangle (5,-6);
\draw (5,-5) rectangle (6,-6);
\draw (6,-5) rectangle (7,-6);
\draw (7,-5) rectangle (8,-6);
\draw (8,-5) rectangle (9,-6);
\draw (9,-5) rectangle (10,-6);
\draw (10,-5) rectangle (11,-6);
\draw (11,-5) rectangle (12,-6);
\draw (12,-5) rectangle (13,-6);\node at(12.5,-5.5){\tiny\( 2 \)};
\draw (13,-5) rectangle (14,-6);
\draw (14,-5) rectangle (15,-6);
\draw (0,-6) rectangle (1,-7);
\draw (1,-6) rectangle (2,-7);
\draw (2,-6) rectangle (3,-7);
\draw (3,-6) rectangle (4,-7);
\draw (4,-6) rectangle (5,-7);
\draw (5,-6) rectangle (6,-7);
\draw (6,-6) rectangle (7,-7);
\draw (7,-6) rectangle (8,-7);
\draw (8,-6) rectangle (9,-7);
\draw (9,-6) rectangle (10,-7);
\draw (10,-6) rectangle (11,-7);
\draw (11,-6) rectangle (12,-7);
\draw (12,-6) rectangle (13,-7);
\draw (13,-6) rectangle (14,-7);\node at(13.5,-6.5){\tiny\( 1 \)};
\draw (14,-6) rectangle (15,-7);
\draw (0,-7) rectangle (1,-8);
\draw (1,-7) rectangle (2,-8);
\draw (2,-7) rectangle (3,-8);
\draw (3,-7) rectangle (4,-8);
\draw (4,-7) rectangle (5,-8);
\draw (5,-7) rectangle (6,-8);
\draw (6,-7) rectangle (7,-8);
\draw (7,-7) rectangle (8,-8);
\draw (8,-7) rectangle (9,-8);
\draw (9,-7) rectangle (10,-8);
\draw (10,-7) rectangle (11,-8);
\draw (11,-7) rectangle (12,-8);
\draw (12,-7) rectangle (13,-8);
\draw (13,-7) rectangle (14,-8);
\draw (14,-7) rectangle (15,-8);\node at(14.5,-7.5){\tiny\( 1 \)};
%
\draw[purple,line width=0.8pt] (6,1.5)--(9,1.5);
\draw[purple,line width=0.8pt] (5.5,2)--(5.5,3);
\draw[purple,line width=0.8pt] (4,3.5)--(5,3.5);
\draw[purple,line width=0.8pt] (3.5,4)--(3.5,5);
%
\draw[black,line width=0.8pt,->] (15,-2.5)--(15.8,-2.5);
\node at (16.1,-2.5){ \tiny{\textbf{$z$}} };
\draw[black,line width=0.8pt,->] (15,1.5)--(15.8,1.5);
\node at (17.2,1.5){ \tiny{\textbf{${\rm up}_{\Psi}(z)$}} };
\draw[black,line width=0.8pt,->] (15,3.5)--(15.8,3.5);
\node at (16.1,3.5){ \tiny{\textbf{$y$}} };
\draw[black,line width=0.8pt,->] (3.5,7)--(3.5,7.8);
\node at (3.5,8.2){ \tiny{\textbf{$y$}} };
\draw[black,line width=0.8pt,->] (5.5,7)--(5.5,7.8);
\node at (6,8.2){ \tiny{\textbf{${\rm down}_{\Psi}(y)$}} };
\draw[black,line width=0.8pt,->] (9.5,7)--(9.5,7.8);
\node at (9.5,8.2){ \tiny{\textbf{$z$}} };
\end{tikzpicture}
=0,
\end{equation*}

\begin{align*}
K(\Psi;M_2;\gamma):=
\begin{tikzpicture}[scale=.3,line width=0.5pt,baseline=(a.base)]
\draw (0,7) rectangle (1,6);\node at(0.5,6.5){\tiny\( 7 \)};
\draw (1,7) rectangle (2,6);
\filldraw[red,draw=black] (2,7) rectangle (3,6);\node at(2.5,6.5){\tiny\( \bullet \)};
\filldraw[red,draw=black] (3,7) rectangle (4,6);\node at(3.5,6.5){\tiny\( \bullet \)};
\filldraw[red,draw=black] (4,7) rectangle (5,6);\node at(4.5,6.5){\tiny\( \bullet \)};
\filldraw[red,draw=black] (5,7) rectangle (6,6);\node at(5.5,6.5){\tiny\( \bullet \)};
\filldraw[red,draw=black] (6,7) rectangle (7,6);\node at(6.5,6.5){\tiny\( \bullet \)};
\filldraw[red,draw=black] (7,7) rectangle (8,6);\node at(7.5,6.5){\tiny\( \bullet \)};
\filldraw[red,draw=black] (8,7) rectangle (9,6);\node at(8.5,6.5){\tiny\( \bullet \)};
\filldraw[red,draw=black] (9,7) rectangle (10,6);\node at(9.5,6.5){\tiny\( \bullet \)};
\filldraw[red,draw=black] (10,7) rectangle (11,6);\node at(10.5,6.5){\tiny\( \bullet \)};
\filldraw[red,draw=black] (11,7) rectangle (12,6);\node at(11.5,6.5){\tiny\( \bullet \)};
\filldraw[red,draw=black] (12,7) rectangle (13,6);\node at(12.5,6.5){\tiny\( \bullet \)};
\filldraw[red,draw=black] (13,7) rectangle (14,6);\node at(13.5,6.5){\tiny\( \bullet \)};
\filldraw[red,draw=black] (14,7) rectangle (15,6);\node at(14.5,6.5){\tiny\( \bullet \)};
\draw (0,6) rectangle (1,5);
\draw (1,6) rectangle (2,5);\node at(1.5,5.5){\tiny\( 7 \)};
\draw (2,6) rectangle (3,5);
\filldraw[yellow,draw=black] (3,6) rectangle (4,5);\node at(3.5,5.5){\tiny\( \bullet \)};
\filldraw[yellow,draw=black] (4,6) rectangle (5,5);\node at(4.5,5.5){\tiny\( \bullet \)};
\filldraw[red,draw=black] (5,6) rectangle (6,5);\node at(5.5,5.5){\tiny\( \bullet \)};
\filldraw[red,draw=black] (6,6) rectangle (7,5);\node at(6.5,5.5){\tiny\( \bullet \)};
\filldraw[red,draw=black] (7,6) rectangle (8,5);\node at(7.5,5.5){\tiny\( \bullet \)};
\filldraw[red,draw=black] (8,6) rectangle (9,5);\node at(8.5,5.5){\tiny\( \bullet \)};
\filldraw[red,draw=black] (9,6) rectangle (10,5);\node at(9.5,5.5){\tiny\( \bullet \)};
\filldraw[red,draw=black] (10,6) rectangle (11,5);\node at(10.5,5.5){\tiny\( \bullet \)};
\filldraw[red,draw=black] (11,6) rectangle (12,5);\node at(11.5,5.5){\tiny\( \bullet \)};
\filldraw[red,draw=black] (12,6) rectangle (13,5);\node at(12.5,5.5){\tiny\( \bullet \)};
\filldraw[red,draw=black] (13,6) rectangle (14,5);\node at(13.5,5.5){\tiny\( \bullet \)};
\filldraw[red,draw=black] (14,6) rectangle (15,5);\node at(14.5,5.5){\tiny\( \bullet \)};
\draw (0,5) rectangle (1,4);
\draw (1,5) rectangle (2,4);
\draw (2,5) rectangle (3,4);\node at(2.5,4.5){\tiny\( 6 \)};
\draw (3,5) rectangle (4,4);
\draw (4,5) rectangle (5,4);
\filldraw[red,draw=black] (5,5) rectangle (6,4);\node at(5.5,4.5){\tiny\( \bullet \)};
\filldraw[red,draw=black] (6,5) rectangle (7,4);\node at(6.5,4.5){\tiny\( \bullet \)};
\filldraw[red,draw=black] (7,5) rectangle (8,4);\node at(7.5,4.5){\tiny\( \bullet \)};
\filldraw[red,draw=black] (8,5) rectangle (9,4);\node at(8.5,4.5){\tiny\( \bullet \)};
\filldraw[red,draw=black] (9,5) rectangle (10,4);\node at(9.5,4.5){\tiny\( \bullet \)};
\filldraw[red,draw=black] (10,5) rectangle (11,4);\node at(10.5,4.5){\tiny\( \bullet \)};
\filldraw[red,draw=black] (11,5) rectangle (12,4);\node at(11.5,4.5){\tiny\( \bullet \)};
\filldraw[red,draw=black] (12,5) rectangle (13,4);\node at(12.5,4.5){\tiny\( \bullet \)};
\filldraw[red,draw=black] (13,5) rectangle (14,4);\node at(13.5,4.5){\tiny\( \bullet \)};
\filldraw[red,draw=black] (14,5) rectangle (15,4);\node at(14.5,4.5){\tiny\( \bullet \)};
\draw (0,4) rectangle (1,3);
\draw (1,4) rectangle (2,3);
\draw (2,4) rectangle (3,3);
\draw (3,4) rectangle (4,3);\node at(3.5,3.5){\tiny\( \textcolor{red}{7} \)};
\draw (4,4) rectangle (5,3);
\filldraw[green,draw=black] (5,4) rectangle (6,3);\node at(5.5,3.5){\tiny\( \bullet \)};
\filldraw[red,draw=black] (6,4) rectangle (7,3);\node at(6.5,3.5){\tiny\( \bullet \)};
\filldraw[red,draw=black] (7,4) rectangle (8,3);\node at(7.5,3.5){\tiny\( \bullet \)};
\filldraw[red,draw=black] (8,4) rectangle (9,3);\node at(8.5,3.5){\tiny\( \bullet \)};
\filldraw[red,draw=black] (9,4) rectangle (10,3);\node at(9.5,3.5){\tiny\( \bullet \)};
\filldraw[red,draw=black] (10,4) rectangle (11,3);\node at(10.5,3.5){\tiny\( \bullet \)};
\filldraw[red,draw=black] (11,4) rectangle (12,3);\node at(11.5,3.5){\tiny\( \bullet \)};
\filldraw[red,draw=black] (12,4) rectangle (13,3);\node at(12.5,3.5){\tiny\( \bullet \)};
\filldraw[red,draw=black] (13,4) rectangle (14,3);\node at(13.5,3.5){\tiny\( \bullet \)};
\filldraw[red,draw=black] (14,4) rectangle (15,3);\node at(14.5,3.5){\tiny\( \bullet \)};
\draw (0,3) rectangle (1,2);
\draw (1,3) rectangle (2,2);
\draw (2,3) rectangle (3,2);
\draw (3,3) rectangle (4,2);
\draw (4,3) rectangle (5,2);\node at(4.5,2.5){\tiny\( \textcolor{red}{7} \)};
\draw (5,3) rectangle (6,2);
\filldraw[green,draw=black] (6,3) rectangle (7,2);\node at(6.5,2.5){\tiny\( \bullet \)};
\filldraw[red,draw=black] (7,3) rectangle (8,2);\node at(7.5,2.5){\tiny\( \bullet \)};
\filldraw[red,draw=black] (8,3) rectangle (9,2);\node at(8.5,2.5){\tiny\( \bullet \)};
\filldraw[red,draw=black] (9,3) rectangle (10,2);\node at(9.5,2.5){\tiny\( \bullet \)};
\filldraw[red,draw=black] (10,3) rectangle (11,2);\node at(10.5,2.5){\tiny\( \bullet \)};
\filldraw[red,draw=black] (11,3) rectangle (12,2);\node at(11.5,2.5){\tiny\( \bullet \)};
\filldraw[red,draw=black] (12,3) rectangle (13,2);\node at(12.5,2.5){\tiny\( \bullet \)};
\filldraw[red,draw=black] (13,3) rectangle (14,2);\node at(13.5,2.5){\tiny\( \bullet \)};
\filldraw[red,draw=black] (14,3) rectangle (15,2);\node at(14.5,2.5){\tiny\( \bullet \)};
\draw (0,2) rectangle (1,1);
\draw (1,2) rectangle (2,1);
\draw (2,2) rectangle (3,1);
\draw (3,2) rectangle (4,1);
\draw (4,2) rectangle (5,1);
\draw (5,2) rectangle (6,1);\node at(5.5,1.5){\tiny\( \textcolor{red}{5} \)};
\draw (6,2) rectangle (7,1);
\draw (7,2) rectangle (8,1);
\draw (8,2) rectangle (9,1);
\filldraw[green,draw=black] (9,2) rectangle (10,1);\node at(9.5,1.5){\tiny\( \bullet \)};
\filldraw[red,draw=black] (10,2) rectangle (11,1);\node at(10.5,1.5){\tiny\( \bullet \)};
\filldraw[red,draw=black] (11,2) rectangle (12,1);\node at(11.5,1.5){\tiny\( \bullet \)};
\filldraw[red,draw=black] (12,2) rectangle (13,1);\node at(12.5,1.5){\tiny\( \bullet \)};
\filldraw[red,draw=black] (13,2) rectangle (14,1);\node at(13.5,1.5){\tiny\( \bullet \)};
\filldraw[red,draw=black] (14,2) rectangle (15,1);\node at(14.5,1.5){\tiny\( \bullet \)};
\draw (0,1) rectangle (1,0);
\draw (1,1) rectangle (2,0);
\draw (2,1) rectangle (3,0);
\draw (3,1) rectangle (4,0);
\draw (4,1) rectangle (5,0);
\draw (5,1) rectangle (6,0);
\draw (6,1) rectangle (7,0);\node at(6.5,0.5){\tiny\( \textcolor{red}{5} \)};
\draw (7,1) rectangle (8,0);
\draw (8,1) rectangle (9,0);
\draw (9,1) rectangle (10,0);
\filldraw[green,draw=black] (10,1) rectangle (11,0);\node at(10.5,0.5){\tiny\( \bullet \)};
\filldraw[red,draw=black] (11,1) rectangle (12,0);\node at(11.5,0.5){\tiny\( \bullet \)};
\filldraw[red,draw=black] (12,1) rectangle (13,0);\node at(12.5,0.5){\tiny\( \bullet \)};
\filldraw[red,draw=black] (13,1) rectangle (14,0);\node at(13.5,0.5){\tiny\( \bullet \)};
\filldraw[red,draw=black] (14,1) rectangle (15,0);\node at(14.5,0.5){\tiny\( \bullet \)};
\draw (0,0) rectangle (1,-1);
\draw (1,0) rectangle (2,-1);
\draw (2,0) rectangle (3,-1);
\draw (3,0) rectangle (4,-1);
\draw (4,0) rectangle (5,-1);
\draw (5,0) rectangle (6,-1);
\draw (6,0) rectangle (7,-1);
\draw (7,0) rectangle (8,-1);\node at(7.5,-0.5){\tiny\( 4 \)};
\draw (8,0) rectangle (9,-1);
\draw (9,0) rectangle (10,-1);
\draw (10,0) rectangle (11,-1);
\draw (11,0) rectangle (12,-1);
\filldraw[red,draw=black] (12,0) rectangle (13,-1);\node at(12.5,-0.5){\tiny\( \bullet \)};
\filldraw[red,draw=black] (13,0) rectangle (14,-1);\node at(13.5,-0.5){\tiny\( \bullet \)};
\filldraw[red,draw=black] (14,0) rectangle (15,-1);\node at(14.5,-0.5){\tiny\( \bullet \)};
\draw (0,-1) rectangle (1,-2);
\draw (1,-1) rectangle (2,-2);
\draw (2,-1) rectangle (3,-2);
\draw (3,-1) rectangle (4,-2);
\draw (4,-1) rectangle (5,-2);
\draw (5,-1) rectangle (6,-2);
\draw (6,-1) rectangle (7,-2);
\draw (7,-1) rectangle (8,-2);
\draw (8,-1) rectangle (9,-2);\node at(8.5,-1.5){\tiny\( 4 \)};
\draw (9,-1) rectangle (10,-2);
\draw (10,-1) rectangle (11,-2);
\draw (11,-1) rectangle (12,-2);
\draw (12,-1) rectangle (13,-2);
\filldraw[red,draw=black] (13,-1) rectangle (14,-2);\node at(13.5,-1.5){\tiny\( \bullet \)};
\filldraw[red,draw=black] (14,-1) rectangle (15,-2);\node at(14.5,-1.5){\tiny\( \bullet \)};
\draw (0,-2) rectangle (1,-3);
\draw (1,-2) rectangle (2,-3);
\draw (2,-2) rectangle (3,-3);
\draw (3,-2) rectangle (4,-3);
\draw (4,-2) rectangle (5,-3);
\draw (5,-2) rectangle (6,-3);
\draw (6,-2) rectangle (7,-3);
\draw (7,-2) rectangle (8,-3);
\draw (8,-2) rectangle (9,-3);
\draw (9,-2) rectangle (10,-3);\node at(9.5,-2.5){\tiny\( \textcolor{red}{4} \)};
\draw (10,-2) rectangle (11,-3);
\draw (11,-2) rectangle (12,-3);
\draw (12,-2) rectangle (13,-3);
\draw (13,-2) rectangle (14,-3);
\filldraw[blue!50,draw=black] (14,-2) rectangle (15,-3);\node at(14.5,-2.5){\tiny\( \bullet \)};
\draw (0,-3) rectangle (1,-4);
\draw (1,-3) rectangle (2,-4);
\draw (2,-3) rectangle (3,-4);
\draw (3,-3) rectangle (4,-4);
\draw (4,-3) rectangle (5,-4);
\draw (5,-3) rectangle (6,-4);
\draw (6,-3) rectangle (7,-4);
\draw (7,-3) rectangle (8,-4);
\draw (8,-3) rectangle (9,-4);
\draw (9,-3) rectangle (10,-4);
\draw (10,-3) rectangle (11,-4);\node at(10.5,-3.5){\tiny\( \textcolor{red}{5} \)};
\draw (11,-3) rectangle (12,-4);
\draw (12,-3) rectangle (13,-4);
\draw (13,-3) rectangle (14,-4);
\filldraw[blue!50,draw=black] (14,-3) rectangle (15,-4);\node at(14.5,-3.5){\tiny\( \bullet \)};
\draw (0,-4) rectangle (1,-5);
\draw (1,-4) rectangle (2,-5);
\draw (2,-4) rectangle (3,-5);
\draw (3,-4) rectangle (4,-5);
\draw (4,-4) rectangle (5,-5);
\draw (5,-4) rectangle (6,-5);
\draw (6,-4) rectangle (7,-5);
\draw (7,-4) rectangle (8,-5);
\draw (8,-4) rectangle (9,-5);
\draw (9,-4) rectangle (10,-5);
\draw (10,-4) rectangle (11,-5);
\draw (11,-4) rectangle (12,-5);\node at(11.5,-4.5){\tiny\( 3 \)};
\draw (12,-4) rectangle (13,-5);
\draw (13,-4) rectangle (14,-5);
\draw (14,-4) rectangle (15,-5);
\draw (0,-5) rectangle (1,-6);
\draw (1,-5) rectangle (2,-6);
\draw (2,-5) rectangle (3,-6);
\draw (3,-5) rectangle (4,-6);
\draw (4,-5) rectangle (5,-6);
\draw (5,-5) rectangle (6,-6);
\draw (6,-5) rectangle (7,-6);
\draw (7,-5) rectangle (8,-6);
\draw (8,-5) rectangle (9,-6);
\draw (9,-5) rectangle (10,-6);
\draw (10,-5) rectangle (11,-6);
\draw (11,-5) rectangle (12,-6);
\draw (12,-5) rectangle (13,-6);\node at(12.5,-5.5){\tiny\( 2 \)};
\draw (13,-5) rectangle (14,-6);
\draw (14,-5) rectangle (15,-6);
\draw (0,-6) rectangle (1,-7);
\draw (1,-6) rectangle (2,-7);
\draw (2,-6) rectangle (3,-7);
\draw (3,-6) rectangle (4,-7);
\draw (4,-6) rectangle (5,-7);
\draw (5,-6) rectangle (6,-7);
\draw (6,-6) rectangle (7,-7);
\draw (7,-6) rectangle (8,-7);
\draw (8,-6) rectangle (9,-7);
\draw (9,-6) rectangle (10,-7);
\draw (10,-6) rectangle (11,-7);
\draw (11,-6) rectangle (12,-7);
\draw (12,-6) rectangle (13,-7);
\draw (13,-6) rectangle (14,-7);\node at(13.5,-6.5){\tiny\( 1 \)};
\draw (14,-6) rectangle (15,-7);
\draw (0,-7) rectangle (1,-8);
\draw (1,-7) rectangle (2,-8);
\draw (2,-7) rectangle (3,-8);
\draw (3,-7) rectangle (4,-8);
\draw (4,-7) rectangle (5,-8);
\draw (5,-7) rectangle (6,-8);
\draw (6,-7) rectangle (7,-8);
\draw (7,-7) rectangle (8,-8);
\draw (8,-7) rectangle (9,-8);
\draw (9,-7) rectangle (10,-8);
\draw (10,-7) rectangle (11,-8);
\draw (11,-7) rectangle (12,-8);
\draw (12,-7) rectangle (13,-8);
\draw (13,-7) rectangle (14,-8);
\draw (14,-7) rectangle (15,-8);\node at(14.5,-7.5){\tiny\( 1 \)};
%
\draw[purple,line width=0.8pt] (6,1.5)--(9,1.5);
\draw[purple,line width=0.8pt] (5.5,2)--(5.5,3);
\draw[purple,line width=0.8pt] (4,3.5)--(5,3.5);
\draw[purple,line width=0.8pt] (3.5,4)--(3.5,5);
%
\draw[black,line width=0.8pt,->] (15,-2.5)--(15.8,-2.5);
\node at (16.1,-2.5){ \tiny{\textbf{$z$}} };
\draw[black,line width=0.8pt,->] (15,1.5)--(15.8,1.5);
\node at (17.2,1.5){ \tiny{\textbf{${\rm up}_{\Psi}(z)$}} };
\draw[black,line width=0.8pt,->] (15,3.5)--(15.8,3.5);
\node at (16.1,3.5){ \tiny{\textbf{$y$}} };
\draw[black,line width=0.8pt,->] (3.5,7)--(3.5,7.8);
\node at (3.5,8.2){ \tiny{\textbf{$y$}} };
\draw[black,line width=0.8pt,->] (5.5,7)--(5.5,7.8);
\node at (6,8.2){ \tiny{\textbf{${\rm down}_{\Psi}(y)$}} };
\draw[black,line width=0.8pt,->] (9.5,7)--(9.5,7.8);
\node at (9.5,8.2){ \tiny{\textbf{$z$}} };
\end{tikzpicture}
=&\
\begin{tikzpicture}[scale=.3,line width=0.5pt,baseline=(a.base)]
\draw (0,7) rectangle (1,6);\node at(0.5,6.5){\tiny\( 7 \)};
\draw (1,7) rectangle (2,6);
\filldraw[red,draw=black] (2,7) rectangle (3,6);\node at(2.5,6.5){\tiny\( \bullet \)};
\filldraw[red,draw=black] (3,7) rectangle (4,6);\node at(3.5,6.5){\tiny\( \bullet \)};
\filldraw[red,draw=black] (4,7) rectangle (5,6);\node at(4.5,6.5){\tiny\( \bullet \)};
\filldraw[red,draw=black] (5,7) rectangle (6,6);\node at(5.5,6.5){\tiny\( \bullet \)};
\filldraw[red,draw=black] (6,7) rectangle (7,6);\node at(6.5,6.5){\tiny\( \bullet \)};
\filldraw[red,draw=black] (7,7) rectangle (8,6);\node at(7.5,6.5){\tiny\( \bullet \)};
\filldraw[red,draw=black] (8,7) rectangle (9,6);\node at(8.5,6.5){\tiny\( \bullet \)};
\filldraw[red,draw=black] (9,7) rectangle (10,6);\node at(9.5,6.5){\tiny\( \bullet \)};
\filldraw[red,draw=black] (10,7) rectangle (11,6);\node at(10.5,6.5){\tiny\( \bullet \)};
\filldraw[red,draw=black] (11,7) rectangle (12,6);\node at(11.5,6.5){\tiny\( \bullet \)};
\filldraw[red,draw=black] (12,7) rectangle (13,6);\node at(12.5,6.5){\tiny\( \bullet \)};
\filldraw[red,draw=black] (13,7) rectangle (14,6);\node at(13.5,6.5){\tiny\( \bullet \)};
\filldraw[red,draw=black] (14,7) rectangle (15,6);\node at(14.5,6.5){\tiny\( \bullet \)};
\draw (0,6) rectangle (1,5);
\draw (1,6) rectangle (2,5);\node at(1.5,5.5){\tiny\( 7 \)};
\draw (2,6) rectangle (3,5);
\filldraw[red,draw=black] (3,6) rectangle (4,5);\node at(3.5,5.5){\tiny\( \bullet \)};
\filldraw[red,draw=black] (4,6) rectangle (5,5);\node at(4.5,5.5){\tiny\( \bullet \)};
\filldraw[red,draw=black] (5,6) rectangle (6,5);\node at(5.5,5.5){\tiny\( \bullet \)};
\filldraw[red,draw=black] (6,6) rectangle (7,5);\node at(6.5,5.5){\tiny\( \bullet \)};
\filldraw[red,draw=black] (7,6) rectangle (8,5);\node at(7.5,5.5){\tiny\( \bullet \)};
\filldraw[red,draw=black] (8,6) rectangle (9,5);\node at(8.5,5.5){\tiny\( \bullet \)};
\filldraw[red,draw=black] (9,6) rectangle (10,5);\node at(9.5,5.5){\tiny\( \bullet \)};
\filldraw[red,draw=black] (10,6) rectangle (11,5);\node at(10.5,5.5){\tiny\( \bullet \)};
\filldraw[red,draw=black] (11,6) rectangle (12,5);\node at(11.5,5.5){\tiny\( \bullet \)};
\filldraw[red,draw=black] (12,6) rectangle (13,5);\node at(12.5,5.5){\tiny\( \bullet \)};
\filldraw[red,draw=black] (13,6) rectangle (14,5);\node at(13.5,5.5){\tiny\( \bullet \)};
\filldraw[red,draw=black] (14,6) rectangle (15,5);\node at(14.5,5.5){\tiny\( \bullet \)};
\draw (0,5) rectangle (1,4);
\draw (1,5) rectangle (2,4);
\draw (2,5) rectangle (3,4);\node at(2.5,4.5){\tiny\( 6 \)};
\draw (3,5) rectangle (4,4);
\draw (4,5) rectangle (5,4);
\filldraw[red,draw=black] (5,5) rectangle (6,4);\node at(5.5,4.5){\tiny\( \bullet \)};
\filldraw[red,draw=black] (6,5) rectangle (7,4);\node at(6.5,4.5){\tiny\( \bullet \)};
\filldraw[red,draw=black] (7,5) rectangle (8,4);\node at(7.5,4.5){\tiny\( \bullet \)};
\filldraw[red,draw=black] (8,5) rectangle (9,4);\node at(8.5,4.5){\tiny\( \bullet \)};
\filldraw[red,draw=black] (9,5) rectangle (10,4);\node at(9.5,4.5){\tiny\( \bullet \)};
\filldraw[red,draw=black] (10,5) rectangle (11,4);\node at(10.5,4.5){\tiny\( \bullet \)};
\filldraw[red,draw=black] (11,5) rectangle (12,4);\node at(11.5,4.5){\tiny\( \bullet \)};
\filldraw[red,draw=black] (12,5) rectangle (13,4);\node at(12.5,4.5){\tiny\( \bullet \)};
\filldraw[red,draw=black] (13,5) rectangle (14,4);\node at(13.5,4.5){\tiny\( \bullet \)};
\filldraw[red,draw=black] (14,5) rectangle (15,4);\node at(14.5,4.5){\tiny\( \bullet \)};
\draw (0,4) rectangle (1,3);
\draw (1,4) rectangle (2,3);
\draw (2,4) rectangle (3,3);
\draw (3,4) rectangle (4,3);\node at(3.5,3.5){\tiny\( 7 \)};
\draw (4,4) rectangle (5,3);
\filldraw[red,draw=black] (5,4) rectangle (6,3);\node at(5.5,3.5){\tiny\( \bullet \)};
\filldraw[red,draw=black] (6,4) rectangle (7,3);\node at(6.5,3.5){\tiny\( \bullet \)};
\filldraw[red,draw=black] (7,4) rectangle (8,3);\node at(7.5,3.5){\tiny\( \bullet \)};
\filldraw[red,draw=black] (8,4) rectangle (9,3);\node at(8.5,3.5){\tiny\( \bullet \)};
\filldraw[red,draw=black] (9,4) rectangle (10,3);\node at(9.5,3.5){\tiny\( \bullet \)};
\filldraw[red,draw=black] (10,4) rectangle (11,3);\node at(10.5,3.5){\tiny\( \bullet \)};
\filldraw[red,draw=black] (11,4) rectangle (12,3);\node at(11.5,3.5){\tiny\( \bullet \)};
\filldraw[red,draw=black] (12,4) rectangle (13,3);\node at(12.5,3.5){\tiny\( \bullet \)};
\filldraw[red,draw=black] (13,4) rectangle (14,3);\node at(13.5,3.5){\tiny\( \bullet \)};
\filldraw[red,draw=black] (14,4) rectangle (15,3);\node at(14.5,3.5){\tiny\( \bullet \)};
\draw (0,3) rectangle (1,2);
\draw (1,3) rectangle (2,2);
\draw (2,3) rectangle (3,2);
\draw (3,3) rectangle (4,2);
\draw (4,3) rectangle (5,2);\node at(4.5,2.5){\tiny\( 7 \)};
\draw (5,3) rectangle (6,2);
\filldraw[red,draw=black] (6,3) rectangle (7,2);\node at(6.5,2.5){\tiny\( \bullet \)};
\filldraw[red,draw=black] (7,3) rectangle (8,2);\node at(7.5,2.5){\tiny\( \bullet \)};
\filldraw[red,draw=black] (8,3) rectangle (9,2);\node at(8.5,2.5){\tiny\( \bullet \)};
\filldraw[red,draw=black] (9,3) rectangle (10,2);\node at(9.5,2.5){\tiny\( \bullet \)};
\filldraw[red,draw=black] (10,3) rectangle (11,2);\node at(10.5,2.5){\tiny\( \bullet \)};
\filldraw[red,draw=black] (11,3) rectangle (12,2);\node at(11.5,2.5){\tiny\( \bullet \)};
\filldraw[red,draw=black] (12,3) rectangle (13,2);\node at(12.5,2.5){\tiny\( \bullet \)};
\filldraw[red,draw=black] (13,3) rectangle (14,2);\node at(13.5,2.5){\tiny\( \bullet \)};
\filldraw[red,draw=black] (14,3) rectangle (15,2);\node at(14.5,2.5){\tiny\( \bullet \)};
\draw (0,2) rectangle (1,1);
\draw (1,2) rectangle (2,1);
\draw (2,2) rectangle (3,1);
\draw (3,2) rectangle (4,1);
\draw (4,2) rectangle (5,1);
\draw (5,2) rectangle (6,1);\node at(5.5,1.5){\tiny\( 5 \)};
\draw (6,2) rectangle (7,1);
\draw (7,2) rectangle (8,1);
\draw (8,2) rectangle (9,1);
\filldraw[red,draw=black] (9,2) rectangle (10,1);\node at(9.5,1.5){\tiny\( \bullet \)};
\filldraw[red,draw=black] (10,2) rectangle (11,1);\node at(10.5,1.5){\tiny\( \bullet \)};
\filldraw[red,draw=black] (11,2) rectangle (12,1);\node at(11.5,1.5){\tiny\( \bullet \)};
\filldraw[red,draw=black] (12,2) rectangle (13,1);\node at(12.5,1.5){\tiny\( \bullet \)};
\filldraw[red,draw=black] (13,2) rectangle (14,1);\node at(13.5,1.5){\tiny\( \bullet \)};
\filldraw[red,draw=black] (14,2) rectangle (15,1);\node at(14.5,1.5){\tiny\( \bullet \)};
\draw (0,1) rectangle (1,0);
\draw (1,1) rectangle (2,0);
\draw (2,1) rectangle (3,0);
\draw (3,1) rectangle (4,0);
\draw (4,1) rectangle (5,0);
\draw (5,1) rectangle (6,0);
\draw (6,1) rectangle (7,0);\node at(6.5,0.5){\tiny\( 5 \)};
\draw (7,1) rectangle (8,0);
\draw (8,1) rectangle (9,0);
\draw (9,1) rectangle (10,0);
\filldraw[red,draw=black] (10,1) rectangle (11,0);\node at(10.5,0.5){\tiny\( \bullet \)};
\filldraw[red,draw=black] (11,1) rectangle (12,0);\node at(11.5,0.5){\tiny\( \bullet \)};
\filldraw[red,draw=black] (12,1) rectangle (13,0);\node at(12.5,0.5){\tiny\( \bullet \)};
\filldraw[red,draw=black] (13,1) rectangle (14,0);\node at(13.5,0.5){\tiny\( \bullet \)};
\filldraw[red,draw=black] (14,1) rectangle (15,0);\node at(14.5,0.5){\tiny\( \bullet \)};
\draw (0,0) rectangle (1,-1);
\draw (1,0) rectangle (2,-1);
\draw (2,0) rectangle (3,-1);
\draw (3,0) rectangle (4,-1);
\draw (4,0) rectangle (5,-1);
\draw (5,0) rectangle (6,-1);
\draw (6,0) rectangle (7,-1);
\draw (7,0) rectangle (8,-1);\node at(7.5,-0.5){\tiny\( 4 \)};
\draw (8,0) rectangle (9,-1);
\draw (9,0) rectangle (10,-1);
\draw (10,0) rectangle (11,-1);
\draw (11,0) rectangle (12,-1);
\filldraw[red,draw=black] (12,0) rectangle (13,-1);\node at(12.5,-0.5){\tiny\( \bullet \)};
\filldraw[red,draw=black] (13,0) rectangle (14,-1);\node at(13.5,-0.5){\tiny\( \bullet \)};
\filldraw[red,draw=black] (14,0) rectangle (15,-1);\node at(14.5,-0.5){\tiny\( \bullet \)};
\draw (0,-1) rectangle (1,-2);
\draw (1,-1) rectangle (2,-2);
\draw (2,-1) rectangle (3,-2);
\draw (3,-1) rectangle (4,-2);
\draw (4,-1) rectangle (5,-2);
\draw (5,-1) rectangle (6,-2);
\draw (6,-1) rectangle (7,-2);
\draw (7,-1) rectangle (8,-2);
\draw (8,-1) rectangle (9,-2);\node at(8.5,-1.5){\tiny\( 4 \)};
\draw (9,-1) rectangle (10,-2);
\draw (10,-1) rectangle (11,-2);
\draw (11,-1) rectangle (12,-2);
\draw (12,-1) rectangle (13,-2);
\filldraw[red,draw=black] (13,-1) rectangle (14,-2);\node at(13.5,-1.5){\tiny\( \bullet \)};
\filldraw[red,draw=black] (14,-1) rectangle (15,-2);\node at(14.5,-1.5){\tiny\( \bullet \)};
\draw (0,-2) rectangle (1,-3);
\draw (1,-2) rectangle (2,-3);
\draw (2,-2) rectangle (3,-3);
\draw (3,-2) rectangle (4,-3);
\draw (4,-2) rectangle (5,-3);
\draw (5,-2) rectangle (6,-3);
\draw (6,-2) rectangle (7,-3);
\draw (7,-2) rectangle (8,-3);
\draw (8,-2) rectangle (9,-3);
\draw (9,-2) rectangle (10,-3);\node at(9.5,-2.5){\tiny\( 4 \)};
\draw (10,-2) rectangle (11,-3);
\draw (11,-2) rectangle (12,-3);
\draw (12,-2) rectangle (13,-3);
\draw (13,-2) rectangle (14,-3);
\filldraw[red,draw=black] (14,-2) rectangle (15,-3);\node at(14.5,-2.5){\tiny\( \bullet \)};
\draw (0,-3) rectangle (1,-4);
\draw (1,-3) rectangle (2,-4);
\draw (2,-3) rectangle (3,-4);
\draw (3,-3) rectangle (4,-4);
\draw (4,-3) rectangle (5,-4);
\draw (5,-3) rectangle (6,-4);
\draw (6,-3) rectangle (7,-4);
\draw (7,-3) rectangle (8,-4);
\draw (8,-3) rectangle (9,-4);
\draw (9,-3) rectangle (10,-4);
\draw (10,-3) rectangle (11,-4);\node at(10.5,-3.5){\tiny\( \textcolor{green}{4} \)};
\draw (11,-3) rectangle (12,-4);
\draw (12,-3) rectangle (13,-4);
\draw (13,-3) rectangle (14,-4);
\filldraw[red,draw=black] (14,-3) rectangle (15,-4);\node at(14.5,-3.5){\tiny\( \bullet \)};
\draw (0,-4) rectangle (1,-5);
\draw (1,-4) rectangle (2,-5);
\draw (2,-4) rectangle (3,-5);
\draw (3,-4) rectangle (4,-5);
\draw (4,-4) rectangle (5,-5);
\draw (5,-4) rectangle (6,-5);
\draw (6,-4) rectangle (7,-5);
\draw (7,-4) rectangle (8,-5);
\draw (8,-4) rectangle (9,-5);
\draw (9,-4) rectangle (10,-5);
\draw (10,-4) rectangle (11,-5);
\draw (11,-4) rectangle (12,-5);\node at(11.5,-4.5){\tiny\( 3 \)};
\draw (12,-4) rectangle (13,-5);
\draw (13,-4) rectangle (14,-5);
\draw (14,-4) rectangle (15,-5);
\draw (0,-5) rectangle (1,-6);
\draw (1,-5) rectangle (2,-6);
\draw (2,-5) rectangle (3,-6);
\draw (3,-5) rectangle (4,-6);
\draw (4,-5) rectangle (5,-6);
\draw (5,-5) rectangle (6,-6);
\draw (6,-5) rectangle (7,-6);
\draw (7,-5) rectangle (8,-6);
\draw (8,-5) rectangle (9,-6);
\draw (9,-5) rectangle (10,-6);
\draw (10,-5) rectangle (11,-6);
\draw (11,-5) rectangle (12,-6);
\draw (12,-5) rectangle (13,-6);\node at(12.5,-5.5){\tiny\( 2 \)};
\draw (13,-5) rectangle (14,-6);
\draw (14,-5) rectangle (15,-6);
\draw (0,-6) rectangle (1,-7);
\draw (1,-6) rectangle (2,-7);
\draw (2,-6) rectangle (3,-7);
\draw (3,-6) rectangle (4,-7);
\draw (4,-6) rectangle (5,-7);
\draw (5,-6) rectangle (6,-7);
\draw (6,-6) rectangle (7,-7);
\draw (7,-6) rectangle (8,-7);
\draw (8,-6) rectangle (9,-7);
\draw (9,-6) rectangle (10,-7);
\draw (10,-6) rectangle (11,-7);
\draw (11,-6) rectangle (12,-7);
\draw (12,-6) rectangle (13,-7);
\draw (13,-6) rectangle (14,-7);\node at(13.5,-6.5){\tiny\( 1 \)};
\draw (14,-6) rectangle (15,-7);
\draw (0,-7) rectangle (1,-8);
\draw (1,-7) rectangle (2,-8);
\draw (2,-7) rectangle (3,-8);
\draw (3,-7) rectangle (4,-8);
\draw (4,-7) rectangle (5,-8);
\draw (5,-7) rectangle (6,-8);
\draw (6,-7) rectangle (7,-8);
\draw (7,-7) rectangle (8,-8);
\draw (8,-7) rectangle (9,-8);
\draw (9,-7) rectangle (10,-8);
\draw (10,-7) rectangle (11,-8);
\draw (11,-7) rectangle (12,-8);
\draw (12,-7) rectangle (13,-8);
\draw (13,-7) rectangle (14,-8);
\draw (14,-7) rectangle (15,-8);\node at(14.5,-7.5){\tiny\( 1 \)};
%
\draw[black,line width=0.8pt,->] (15,-3.5)--(15.8,-3.5);
\node at (16.8,-3.5){ \tiny{\textbf{$z+1$}} };
\end{tikzpicture}\\
& \\
=&\ K(\Psi;M_2;\gamma-\epsilon_{z+1}),
\end{align*}
where $M_1\sqcup y = M_2$.
Here, the red, yellow, green, and blue roots together form the root ideal $\Psi$. Among them, the yellow  (respectively, blue) roots represents a ceiling (respectively, a wall), while the green roots correspond to mirrors. The red path is the bounce path $(2, 4, 6, 10)$
passing through $y=4$ and $z=10$.
\hspace*{\fill}$\square$
\end{exam}

\subsection{Weighted $K$-$k$-Schur functions}
In this subsection, we first review Katalan formulas of $K$-$k$-Schur functions and closed $k$-Schur Katalan functions. Next we define a new subfamily of Katalan functions, named weighted $K$-$k$-Schur functions, which are used as a transition between $K$-$k$-Schur functions and closed $k$-Schur Katalan functions.

For $\lambda\in\text{P}_{\ell}^{k}$, define a root ideal
\begin{equation}
	\Delta^{k}(\lambda):=\{(i,j)\in\Delta_{\ell}^{+} \mid k-\lambda_{i}+i<j \} \subseteq \Delta_{\ell}^{+}.
	\mlabel{eq:dkl}
\end{equation}
For $\mathcal{R}\subseteq\Delta_{\ell}^{+}$, denote by
\begin{equation}
L(\mathcal{R}):=\bigsqcup_{(i,j)\in\mathcal{R}}\{j\}
\mlabel{eq:Lroot}
\end{equation}
the multiset of second components of elements in $\mathcal{R}$.
In particular, if $\mathcal{R}$ is a root ideal, we denote $$K(\Psi;\mathcal{R};\gamma):= K(\Psi;L(\mathcal{R});\gamma).$$
For later reference, we compile several simple yet useful facts about the root ideal $\dkl$. We abbreviate $\bdo{\lambda}{x} := \down{\dkl}{x}$ and denote by $\bott_\lambda$ the bottom of $\dkl$.

\begin{remark}(\cite[Remark~5.1]{BMS}, \cite[Remark~3.1]{FG})
Let $\lambda\in\pkl$.
\begin{enumerate}
\item For each $x\in[\bott_\lambda]$, the notation $\bdo{\lambda}{x}$ is defined and the root $(x,\bdo{\lambda}{x})$ is removable in $\dkl$. Thus, $\text{bot}_{\dkl}(x)\in[\bott_\lambda+1,\ell]$ for each $x\in[\ell]$.
\item If $x\in[\bott_\lambda-1]$ and $k>\lambda_{x}=\lambda_{x+1}$, then there is a mirror in rows $x,x+1$ in $\dkl$.
\item If $x\in[\bott_\lambda-1]$ and $\lambda_{x}>\lambda_{x+1}$, then there is a ceiling in columns
    $\bdo{\lambda}{x},\bdo{\lambda}{x}+1$ in $\dkl$.
\end{enumerate}
\mlabel{re:rootfact}
\end{remark}

The notion of the $K$-$k$-Schur function was introduced by Lam et al.~\cite{LSS}, and later, Morse~\cite{Mor} provided Pieri rules that offer an effective characterization. For our purposes, we recall here only its Katalan formula.

\begin{lemma}(\cite[Theorem 2.6]{BMS})
For $\lambda\in\pkl$, the $K$-$k$-Schur function $g_{\lambda}^{(k)}$ satisfies
\begin{equation*}
g_{\lambda}^{(k)} = K(\Delta^{k}(\lambda);\Delta^{k+1}(\lambda);\lambda).
\end{equation*}
\mlabel{lem:KkSch}
\end{lemma}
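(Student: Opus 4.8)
The identity is \cite[Theorem~2.6]{BMS}, so logically nothing beyond that reference is required; here is a plan for proving it directly. The point is that $g_\lambda^{(k)}$ is \emph{not} defined by the right-hand side: following Lam--Schilling--Shimozono and Morse it is characterized, inside $\QQ[h_1,\dots,h_k]$, by the value $g_\emptyset^{(k)}=1$ together with the $K$-theoretic $k$-Pieri rule, i.e.\ an explicit signed combinatorial expansion of $g_{(r)}^{(k)}\cdot g_\lambda^{(k)}$ (equivalently of the dual operator $G_{1^m}^\perp g_\lambda^{(k)}$). So it suffices to verify that $\lambda\mapsto K(\dkl;\Delta^{k+1}(\lambda);\lambda)$ satisfies this same rule and initial value; uniqueness then forces the equality. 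The base case is immediate: $\dkl=\Delta^{k+1}(\lambda)=\emptyset$ when $\lambda=\emptyset$, so $K(\emptyset;\emptyset;\emptyset)=g_{()}=1$. (An equivalent route, parallel to how the homogeneous identity $s_\lambda^{(k)}=K(\dkl;\emptyset;\lambda)$ is handled in the Catalan-function literature, is to induct on $\ell(\lambda)$ and $|\lambda|$, stripping parts off $\lambda$ one at a time.)

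\textbf{Key steps.} For the inductive step I would apply the relevant $K$-Pieri operator to $K(\dkl;\Delta^{k+1}(\lambda);\lambda)$ and commute it past the raising factors $\prod_{(i,j)\in\dkl}(1-R_{ij})^{-1}$ and the lowering factors $\prod_{z\in L(\Delta^{k+1}(\lambda))}(1-L_z)$. This yields a finite signed sum of Katalan functions $K(\Psi';M';\gamma')$, and the heart of the matter is to identify each summand with a \emph{standard} Katalan function $K(\Delta^{k}(\mu);\Delta^{k+1}(\mu);\mu)$ for a partition $\mu$ occurring in Morse's rule. This is carried out by repeatedly applying the four resolution identities of Lemma~\mref{lem:relk}: parts (1) and (2) add or delete removable/addable roots of the root ideal, while parts (3) and (4) adjust the lowering multiset. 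The structural facts in Remark~\mref{re:rootfact} --- the removable roots $(x,\bdo{\lambda}{x})$, the mirrors in rows with $\lambda_x=\lambda_{x+1}<k$, and the ceilings below rows with $\lambda_x>\lambda_{x+1}$ --- record precisely how $\Delta^k(\mu)$ and $\Delta^{k+1}(\mu)$ differ from $\dkl$ and $\Delta^{k+1}(\lambda)$ after the Pieri move, so that the index bookkeeping can be made to close.

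\textbf{Main obstacle.} The difficult point is controlling the spurious terms produced by the commutation and the resolution identities: one obtains Katalan functions indexed by near-standard but non-standard data, and one must show they vanish or cancel in pairs. This is exactly what the Mirror Lemma (Lemma~\mref{lem:mirr1}) is for: its six hypotheses --- a ceiling, a run of mirrors along a bounce path, a wall, the two diagonal conditions $\gamma_x=\gamma_{x+1}$ and $\gamma_z+1=\gamma_{z+1}$, and the multiplicity condition $m_M(x)+1=m_M(x+1)$ along the path --- match precisely the configuration generated by the Pieri action at a wall of $\dkl$, forcing the offending $K(\Psi;M;\gamma)$ to be either $0$ or a lower term with $\gamma$ shifted by $\epsilon_{z+1}$ (the latter absorbed via Lemma~\mref{lem:LK}). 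Thus the proof reduces to a case analysis according to where the part of $\lambda$ being modified sits relative to the walls of $\dkl$ --- strictly below, at, or above --- with each case matched against one family of terms in Morse's $K$-Pieri rule; once this matching is verified, uniqueness completes the induction.
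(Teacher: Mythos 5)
This lemma is not proved in the paper at all: it is imported verbatim from \cite[Theorem~2.6]{BMS} and used as a black box, so there is no in-paper argument to compare your proposal against. Your sketch is nonetheless a reasonable reconstruction of how the result is actually established in \cite{BMS}: characterize $g_\lambda^{(k)}$ by the initial value $g_\emptyset^{(k)}=1$ together with Morse's $K$-$k$-Pieri rule (equivalently the dual operator $G_{1^m}^\perp$), show that $\lambda\mapsto K(\dkl;\Delta^{k+1}(\lambda);\lambda)$ satisfies the same recursion by commuting the Pieri operator past the raising and lowering factors and resolving the result with the identities of Lemma~\mref{lem:relk}, use the Mirror Lemma~\mref{lem:mirr1} to kill or absorb the non-standard terms, and conclude by uniqueness. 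Be aware, however, that what you have written is a plan rather than a proof: the decisive step --- identifying every summand produced by the commutation with a standard Katalan function $K(\Delta^{k}(\mu);\Delta^{k+1}(\mu);\mu)$ for exactly the partitions $\mu$ occurring in Morse's rule, with the correct signs and multiplicities --- is asserted rather than executed, and that bookkeeping is where essentially all of the work in the original proof lives. For the purposes of this paper the citation to \cite{BMS} is the intended and sufficient justification.
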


The following is another family of Katalan functions, named
closed $k$-Schur Katalan function.

\begin{defn}(\cite[Definition 2.11]{BMS})
For $\lambda\in\text{P}_{\ell}^{k}$, the {\em closed $k$-Schur Katalan function} is defined by
\begin{equation*}
\fg{\lambda}{k} :=K(\Delta^{k}(\lambda);\Delta^{k}(\lambda);\lambda).
\end{equation*}
\mlabel{defn:cKataFunc}
\end{defn}

The key subtlety between the Katalan formulas for the $K$-$k$-Schur function $g_{\lambda}^{(k)}$ and the closed $k$-Schur Katalan function $\fg{\lambda}{k}$ lies in the choice of multiset.
By~(\mref{eq:dkl}) and Remark~\mref{re:rootfact},
$$
\De{k+1}{\lambda} = \De{k}{\lambda}\setminus \{ (x, \bdo{\lambda}{x}) \mid x\in[\bott_\lambda] \},
$$
and so
\begin{equation}
L\big(\De{k+1}{\lambda}\big) \overset{\eqref{eq:Lroot}}{=} L\big(\De{k}{\lambda}\big)\setminus \{ \bdo{\lambda}{x} \mid x\in[\bott_\lambda] \}.
\mlabel{eq:Dk1toDk}
\end{equation}

We use the following example to more clearly illustrate the difference between $K$-$k$-Schur functions and closed $k$-Schur Katalan functions.

\begin{exam}
Let
\[
k = 7, \quad \ell = 6,\quad \lambda = (7,6,6,6,4,3)\in{\rm P}_6^7.
\]
Then
\begin{align*}
\dkl =&\ \big\{(1,2),(1,3),(1,4),(1,5),(1,6),(2,4),(2,5),(2,6),(3,5),(3,6),(4,6)  \big\},\\
\De{k+1}{\lambda}=&\ \big\{(1,3),(1,4),(1,5),(1,6),(2,5),(2,6),(3,6)\big\}.
\end{align*}
By~(\mref{eq:Lroot}),
\begin{equation}
L\big(\dkl\big) = \{ 2,3,4,4,5,5,5,6,6,6,6 \},\quad L\big(\De{k+1}{\lambda}\big) =\{ 3,4,5,5,6,6,6 \}.
\mlabel{eq:twoL}
\end{equation}
We represent the corresponding $K$-$k$-Schur function and closed $k$-Schur Katalan function as
\begin{equation*}
g_\lambda^{(k)} =
\begin{tikzpicture}[scale=.4,line width=0.5pt,baseline=(a.base)]
\draw (0,3) rectangle (1,2);\node at(0.5,2.5){\scriptsize\( 7 \)};
\filldraw[red,draw=black] (1,3) rectangle (2,2);
\filldraw[red,draw=black] (2,3) rectangle (3,2);\node at(2.5,2.5){\scriptsize\( \bullet \)};
\filldraw[red,draw=black] (3,3) rectangle (4,2);\node at(3.5,2.5){\scriptsize\( \bullet \)};
\filldraw[red,draw=black] (4,3) rectangle (5,2);\node at(4.5,2.5){\scriptsize\( \bullet \)};
\filldraw[red,draw=black] (5,3) rectangle (6,2);\node at(5.5,2.5){\scriptsize\( \bullet \)};
\draw (0,2) rectangle (1,1);
\draw (1,2) rectangle (2,1);\node at(1.5,1.5){\scriptsize\( 6 \)};
\draw (2,2) rectangle (3,1);
\filldraw[red,draw=black] (3,2) rectangle (4,1);
\filldraw[red,draw=black] (4,2) rectangle (5,1);\node at(4.5,1.5){\scriptsize\( \bullet \)};
\filldraw[red,draw=black] (5,2) rectangle (6,1);\node at(5.5,1.5){\scriptsize\( \bullet \)};
\draw (0,1) rectangle (1,0);
\draw (1,1) rectangle (2,0);
\draw (2,1) rectangle (3,0);\node at(2.5,0.5){\scriptsize\( 6 \)};
\draw (3,1) rectangle (4,0);
\filldraw[red,draw=black] (4,1) rectangle (5,0);
\filldraw[red,draw=black] (5,1) rectangle (6,0);\node at(5.5,0.5){\scriptsize\( \bullet \)};
\draw (0,0) rectangle (1,-1);
\draw (1,0) rectangle (2,-1);
\draw (2,0) rectangle (3,-1);
\draw (3,0) rectangle (4,-1);\node at(3.5,-0.5){\scriptsize\( 6 \)};
\draw (4,0) rectangle (5,-1);
\filldraw[red,draw=black] (5,0) rectangle (6,-1);
\draw (0,-1) rectangle (1,-2);
\draw (1,-1) rectangle (2,-2);
\draw (2,-1) rectangle (3,-2);
\draw (3,-1) rectangle (4,-2);
\draw (4,-1) rectangle (5,-2);\node at(4.5,-1.5){\scriptsize\( 4 \)};
\draw (5,-1) rectangle (6,-2);
\draw (0,-2) rectangle (1,-3);
\draw (1,-2) rectangle (2,-3);
\draw (2,-2) rectangle (3,-3);
\draw (3,-2) rectangle (4,-3);
\draw (4,-2) rectangle (5,-3);
\draw (5,-2) rectangle (6,-3);\node at(5.5,-2.5){\scriptsize\( 3 \)};
\end{tikzpicture}
\, , \quad \quad \quad
\fg{\lambda}{k} =
\begin{tikzpicture}[scale=.4,line width=0.5pt,baseline=(a.base)]
\draw (0,3) rectangle (1,2);\node at(0.5,2.5){\scriptsize\( 7 \)};
\filldraw[red,draw=black] (1,3) rectangle (2,2);\node at(1.5,2.5){\scriptsize\( \bullet \)};
\filldraw[red,draw=black] (2,3) rectangle (3,2);\node at(2.5,2.5){\scriptsize\( \bullet \)};
\filldraw[red,draw=black] (3,3) rectangle (4,2);\node at(3.5,2.5){\scriptsize\( \bullet \)};
\filldraw[red,draw=black] (4,3) rectangle (5,2);\node at(4.5,2.5){\scriptsize\( \bullet \)};
\filldraw[red,draw=black] (5,3) rectangle (6,2);\node at(5.5,2.5){\scriptsize\( \bullet \)};
\draw (0,2) rectangle (1,1);
\draw (1,2) rectangle (2,1);\node at(1.5,1.5){\scriptsize\( 6 \)};
\draw (2,2) rectangle (3,1);
\filldraw[red,draw=black] (3,2) rectangle (4,1);\node at(3.5,1.5){\scriptsize\( \bullet \)};
\filldraw[red,draw=black] (4,2) rectangle (5,1);\node at(4.5,1.5){\scriptsize\( \bullet \)};
\filldraw[red,draw=black] (5,2) rectangle (6,1);\node at(5.5,1.5){\scriptsize\( \bullet \)};
\draw (0,1) rectangle (1,0);
\draw (1,1) rectangle (2,0);
\draw (2,1) rectangle (3,0);\node at(2.5,0.5){\scriptsize\( 6 \)};
\draw (3,1) rectangle (4,0);
\filldraw[red,draw=black] (4,1) rectangle (5,0);\node at(4.5,0.5){\scriptsize\( \bullet \)};
\filldraw[red,draw=black] (5,1) rectangle (6,0);\node at(5.5,0.5){\scriptsize\( \bullet \)};
\draw (0,0) rectangle (1,-1);
\draw (1,0) rectangle (2,-1);
\draw (2,0) rectangle (3,-1);
\draw (3,0) rectangle (4,-1);\node at(3.5,-0.5){\scriptsize\( 6 \)};
\draw (4,0) rectangle (5,-1);
\filldraw[red,draw=black] (5,0) rectangle (6,-1);\node at(5.5,-0.5){\scriptsize\( \bullet \)};
\draw (0,-1) rectangle (1,-2);
\draw (1,-1) rectangle (2,-2);
\draw (2,-1) rectangle (3,-2);
\draw (3,-1) rectangle (4,-2);
\draw (4,-1) rectangle (5,-2);\node at(4.5,-1.5){\scriptsize\( 4 \)};
\draw (5,-1) rectangle (6,-2);
\draw (0,-2) rectangle (1,-3);
\draw (1,-2) rectangle (2,-3);
\draw (2,-2) rectangle (3,-3);
\draw (3,-2) rectangle (4,-3);
\draw (4,-2) rectangle (5,-3);
\draw (5,-2) rectangle (6,-3);\node at(5.5,-2.5){\scriptsize\( 3 \)};
\end{tikzpicture}
\, .
\end{equation*}
Then $$\bott_\lambda = 4, \quad  \bdo{\lambda}{1} = 2, \quad\bdo{\lambda}{2} = 4, \quad\bdo{\lambda}{3} = 5, \quad\bdo{\lambda}{4} = 6,$$
and so
$$L\big(\De{k+1}{\lambda}\big) = L\big(\dkl\big)\setminus \{ 2,4,5,6\} = L\big(\dkl\big)\setminus\{ \bdo{\lambda}{x} \mid x\in[\bott_\lambda] \},$$
which exactly satisfies~(\mref{eq:Dk1toDk}).
\hspace*{\fill}$\square$
\mlabel{exam:twog}
\end{exam}

We now introduce a new subfamily of Katalan functions that simultaneously generalize both the $K$-$k$-Schur functions $g_\lambda^{(k)}$ and the closed $k$-Schur Katalan functions $\fg{\lambda}{k}$.

\begin{defn}
Let $\lambda\in\pkl$. For $z\in[\bott_\lambda+1]$, define the {\em $K$-$k$-Schur function of weight $z$} to be
\begin{equation}
\sg{\lambda}{z}:= K\Big(\dkl;L\big(\dkl\big)\setminus \{ \bdo{\lambda}{x} \mid x\in[z,\bott_\lambda] \};\lambda\Big).
\mlabel{eq:wKk}
\end{equation}
\mlabel{def:semiKk}
\end{defn}

Note that the cases $z > \bott_\lambda + 1$ and $z = \bott_\lambda + 1$ have the same effect in~(\mref{eq:wKk}). Therefore, we restrict to $z \in [\bott_\lambda + 1]$ in the above definition.

\begin{prop}
Let $\lambda\in\pkl$. Then
\begin{equation}
\sg{\lambda}{1} = g_\lambda^{(k)}, \quad \sg{\lambda}{\bott_\lambda+1} = \fg{\lambda}{k}.
\mlabel{eq:sgtofg}
\end{equation}
\mlabel{prop:threeg}
\end{prop}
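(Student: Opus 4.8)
The plan is to verify both identities in~\meqref{eq:sgtofg} by directly unwinding Definition~\mref{def:semiKk} and matching the resulting Katalan functions with the Katalan formula for $g_\lambda^{(k)}$ in Lemma~\mref{lem:KkSch} and with Definition~\mref{defn:cKataFunc} of $\fg{\lambda}{k}$. No new input beyond these definitions, the multiset identity~\meqref{eq:Dk1toDk}, and the interval conventions fixed in the Notations is needed.

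For $\sg{\lambda}{1} = g_\lambda^{(k)}$, I would set $z = 1$ in~\meqref{eq:wKk}. Since $[1,\bott_\lambda] = [\bott_\lambda]$, the multiset removed from $L(\dkl)$ in the definition of $\sg{\lambda}{1}$ is exactly $\{\bdo{\lambda}{x}\mid x\in[\bott_\lambda]\}$, so that $\sg{\lambda}{1} = K\big(\dkl;\, L(\dkl)\setminus\{\bdo{\lambda}{x}\mid x\in[\bott_\lambda]\};\, \lambda\big)$. Applying the multiset identity~\meqref{eq:Dk1toDk}---which itself follows from~\meqref{eq:dkl}, Remark~\mref{re:rootfact}, and the second-component map $L$ of~\meqref{eq:Lroot}---the inner multiset becomes $L(\De{k+1}{\lambda})$. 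Using the abbreviation $K(\Psi;\mathcal{R};\gamma) = K(\Psi;L(\mathcal{R});\gamma)$ for root ideals $\mathcal{R}$, this gives $\sg{\lambda}{1} = K(\De{k}{\lambda};\De{k+1}{\lambda};\lambda)$, which is precisely $g_\lambda^{(k)}$ by Lemma~\mref{lem:KkSch}.

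For $\sg{\lambda}{\bott_\lambda+1} = \fg{\lambda}{k}$, I would set $z = \bott_\lambda+1$ in~\meqref{eq:wKk}. Then $[\bott_\lambda+1,\bott_\lambda] = \emptyset$ by the convention that $[a,b]=\emptyset$ when $a>b$, so no element is removed and the inner multiset is simply $L(\dkl)$. Hence $\sg{\lambda}{\bott_\lambda+1} = K(\dkl;L(\dkl);\lambda) = K(\De{k}{\lambda};\De{k}{\lambda};\lambda) = \fg{\lambda}{k}$ by Definition~\mref{defn:cKataFunc}.

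There is no substantive obstacle here: the argument is a bookkeeping of conventions. The only points demanding care are the interval identities $[1,\bott_\lambda]=[\bott_\lambda]$ and $[\bott_\lambda+1,\bott_\lambda]=\emptyset$, together with the observation that~\meqref{eq:Dk1toDk} already packages exactly the multiset comparison needed for the $z=1$ case; once these are in hand, both equalities are immediate from the definitions.
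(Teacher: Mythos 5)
Your proposal is correct and follows essentially the same route as the paper's own proof: both cases are handled by specializing $z$ in Definition~\mref{def:semiKk}, using the interval conventions $[1,\bott_\lambda]=[\bott_\lambda]$ and $[\bott_\lambda+1,\bott_\lambda]=\emptyset$, invoking~\meqref{eq:Dk1toDk} for the $z=1$ case, and then matching with Lemma~\mref{lem:KkSch} and Definition~\mref{defn:cKataFunc}. No discrepancies to report.
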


\begin{proof}
By Definition~\mref{def:semiKk},
\begin{align*}
\sg{\lambda}{1} =&\ K\Big(\dkl;L\big(\dkl\big)\setminus \{ \bdo{\lambda}{x} \mid x\in[\bott_\lambda] \};\lambda\Big) \hspace{1cm} (\text{by $[\bott_\lambda]:= [1,\bott_\lambda]$})\\
=&\ K(\dkl;\De{k+1}{\lambda};\lambda) \hspace{1cm} (\text{by~(\ref{eq:Dk1toDk})})\\
=&\ g_\lambda^{(k)} \hspace{1cm} (\text{by Lemma~\ref{lem:KkSch}}).
\end{align*}
Similarly,
\begin{align*}
\sg{\lambda}{\bott_\lambda+1} =&\ K\Big(\dkl;L\big(\dkl\big)\setminus \{ \bdo{\lambda}{x} \mid x\in[\bott_\lambda+1,\bott_\lambda] \};\lambda\Big) \\
=&\ K(\dkl;\dkl;\lambda) \hspace{1cm} (\text{by $[\bott_\lambda+1,\bott_\lambda] = \emptyset$})\\
=&\ \fg{\lambda}{k} \hspace{1cm} (\text{by Definition~\ref{defn:cKataFunc}}).
\end{align*}
This completes the proof.
\end{proof}

Let us end this section with an example for better understanding.

\begin{exam}
Let
\[
k = 7, \quad \ell = 6, \quad \lambda = (7,6,6,6,4,3)\in{\rm P}_6^7.
 \]
It follows from~Example~\mref{exam:twog} that
\[
\bott_\lambda = 4, \quad \bdo{\lambda}{1} = 2, \quad\bdo{\lambda}{2} = 4, \quad\bdo{\lambda}{3} = 5, \quad\bdo{\lambda}{4} = 6.
\]
Suppose $z = 2$. Then
\begin{align*}
L\big(\dkl\big)\setminus \{ \bdo{\lambda}{x} \mid x\in[z,\bott_\lambda] \}
=&\ \{ 2,3,4,4,5,5,5,6,6,6,6 \}\setminus \{ 4,5,6 \} \hspace{1cm} (\text{by~(\ref{eq:twoL})}) \\
=&\ \{ 2,3,4,5,5,6,6,6 \}.
\end{align*}
Hence, we represent the associated $K$-$k$-Schur function of weight 2 as
\begin{equation*}
\sg{\lambda}{2} =
\begin{tikzpicture}[scale=.4,line width=0.5pt,baseline=(a.base)]
\draw (0,3) rectangle (1,2);\node at(0.5,2.5){\scriptsize\( 7 \)};
\filldraw[red,draw=black] (1,3) rectangle (2,2);\node at(1.5,2.5){\scriptsize\( \bullet \)};
\filldraw[red,draw=black] (2,3) rectangle (3,2);\node at(2.5,2.5){\scriptsize\( \bullet \)};
\filldraw[red,draw=black] (3,3) rectangle (4,2);\node at(3.5,2.5){\scriptsize\( \bullet \)};
\filldraw[red,draw=black] (4,3) rectangle (5,2);\node at(4.5,2.5){\scriptsize\( \bullet \)};
\filldraw[red,draw=black] (5,3) rectangle (6,2);\node at(5.5,2.5){\scriptsize\( \bullet \)};
\draw (0,2) rectangle (1,1);
\draw (1,2) rectangle (2,1);\node at(1.5,1.5){\scriptsize\( 6 \)};
\draw (2,2) rectangle (3,1);
\filldraw[red,draw=black] (3,2) rectangle (4,1);
\filldraw[red,draw=black] (4,2) rectangle (5,1);\node at(4.5,1.5){\scriptsize\( \bullet \)};
\filldraw[red,draw=black] (5,2) rectangle (6,1);\node at(5.5,1.5){\scriptsize\( \bullet \)};
\draw (0,1) rectangle (1,0);
\draw (1,1) rectangle (2,0);
\draw (2,1) rectangle (3,0);\node at(2.5,0.5){\scriptsize\( 6 \)};
\draw (3,1) rectangle (4,0);
\filldraw[red,draw=black] (4,1) rectangle (5,0);
\filldraw[red,draw=black] (5,1) rectangle (6,0);\node at(5.5,0.5){\scriptsize\( \bullet \)};
\draw (0,0) rectangle (1,-1);
\draw (1,0) rectangle (2,-1);
\draw (2,0) rectangle (3,-1);
\draw (3,0) rectangle (4,-1);\node at(3.5,-0.5){\scriptsize\( 6 \)};
\draw (4,0) rectangle (5,-1);
\filldraw[red,draw=black] (5,0) rectangle (6,-1);
\draw (0,-1) rectangle (1,-2);
\draw (1,-1) rectangle (2,-2);
\draw (2,-1) rectangle (3,-2);
\draw (3,-1) rectangle (4,-2);
\draw (4,-1) rectangle (5,-2);\node at(4.5,-1.5){\scriptsize\( 4 \)};
\draw (5,-1) rectangle (6,-2);
\draw (0,-2) rectangle (1,-3);
\draw (1,-2) rectangle (2,-3);
\draw (2,-2) rectangle (3,-3);
\draw (3,-2) rectangle (4,-3);
\draw (4,-2) rectangle (5,-3);
\draw (5,-2) rectangle (6,-3);\node at(5.5,-2.5){\scriptsize\( 3 \)};
\draw[black,line width=0.8pt,->] (6,1.5)--(6.8,1.5);
\node at (7.1,1.5){ \scriptsize{\text{$2$}} };
\end{tikzpicture}
.
\end{equation*}
\hspace*{\fill}$\square$
\end{exam}

\section{A recursive formula for weighted $K$-$k$-Schur functions}
\mlabel{sec:lower}
In this section, motivated by Proposition~\mref{prop:threeg}, we investigate the coefficients $b_{\lambda\mu}$ in Conjecture~\mref{conj:aim} by analyzing how the $K$-$k$-Schur functions $\sg{\lambda}{z+1}$ of weight $z+1$ decompose linearly in terms of $\sg{\mu}{z}$ for $\lambda, \mu \in \pkl$ and $z \in [\bott_\lambda]$ (Proposition~\mref{prop:ind}).
We begin with the following result.

\begin{lemma}
Let $\lambda\in\pkl$ and $z\in[\bott_\lambda]$. Then
\begin{equation*}
\sg{\lambda}{z+1} =\sg{\lambda}{z} - L_{\bdo{\lambda}{z}}\sg{\lambda}{z}.
\end{equation*}
\mlabel{lem:ztoz1}
\end{lemma}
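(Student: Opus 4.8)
The plan is to obtain the identity as a single application of the multiset-expansion rule Lemma~\mref{lem:relk}\mref{it:relk3}, followed by the lowering-operator identity Lemma~\mref{lem:LK}: passing from weight $z$ to weight $z+1$ changes exactly one entry of the defining multiset. First I would pin down the bookkeeping. Both $\sg{\lambda}{z}$ and $\sg{\lambda}{z+1}$ are defined, since $z\in[\bott_\lambda]$ forces $z,z+1\in[\bott_\lambda+1]$. Write $M:=L\big(\dkl\big)\setminus\{\bdo{\lambda}{x}\mid x\in[z+1,\bott_\lambda]\}$, so that $\sg{\lambda}{z+1}=K(\dkl;M;\lambda)$ by~\meqref{eq:wKk}. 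Two facts are needed: (i) $\bdo{\lambda}{z}\in L\big(\dkl\big)$, which holds because by Remark~\mref{re:rootfact}(1) the root $(z,\bdo{\lambda}{z})$ is removable in $\dkl$, so $\bdo{\lambda}{z}$ occurs as a second component of an element of $\dkl$; and (ii) the elements $\bdo{\lambda}{1},\ldots,\bdo{\lambda}{\bott_\lambda}$ are pairwise distinct, since the pairs $(x,\bdo{\lambda}{x})$ are precisely the edges of the bounce graph of $\dkl$, a disjoint union of paths, so no vertex of $[\ell]$ is the target of two such edges. From (i) and (ii), $\bdo{\lambda}{z}$ differs from every $\bdo{\lambda}{x}$ with $x\geq z+1$, hence $\bdo{\lambda}{z}\in M$, and moreover
\[
M\setminus\bdo{\lambda}{z}=L\big(\dkl\big)\setminus\{\bdo{\lambda}{x}\mid x\in[z,\bott_\lambda]\},
\]
which is exactly the multiset defining $\sg{\lambda}{z}$.

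Next I would apply Lemma~\mref{lem:relk}\mref{it:relk3} to $K(\dkl;M;\lambda)$ with $m=\bdo{\lambda}{z}\in M$, obtaining
\[
\sg{\lambda}{z+1}=K\big(\dkl;M\setminus\bdo{\lambda}{z};\lambda\big)-K\big(\dkl;M\setminus\bdo{\lambda}{z};\lambda-\epsilon_{\bdo{\lambda}{z}}\big).
\]
By the displayed identification of $M\setminus\bdo{\lambda}{z}$, the first term equals $\sg{\lambda}{z}$. For the second term, Lemma~\mref{lem:LK} gives $K\big(\dkl;M\setminus\bdo{\lambda}{z};\lambda-\epsilon_{\bdo{\lambda}{z}}\big)=L_{\bdo{\lambda}{z}}K\big(\dkl;M\setminus\bdo{\lambda}{z};\lambda\big)=L_{\bdo{\lambda}{z}}\sg{\lambda}{z}$. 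Substituting yields $\sg{\lambda}{z+1}=\sg{\lambda}{z}-L_{\bdo{\lambda}{z}}\sg{\lambda}{z}$, as claimed.

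There is no substantive obstacle here; the proof is a bookkeeping argument. The only point that genuinely needs care is fact (ii) — that $\bdo{\lambda}{z}$ really belongs to the multiset defining $\sg{\lambda}{z+1}$, so that Lemma~\mref{lem:relk}\mref{it:relk3} is applicable and $M\setminus\bdo{\lambda}{z}$ is indeed the weight-$z$ multiset rather than something smaller — and this is immediate from the bounce-path structure of $\dkl$ recalled in Section~\mref{sec:equiv}.
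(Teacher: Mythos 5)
Your proof is correct and follows essentially the same route as the paper's: a single application of Lemma~\mref{lem:relk}~\mref{it:relk3} with $m=\bdo{\lambda}{z}$, the identification of the resulting multiset with the weight-$z$ multiset, and Lemma~\mref{lem:LK} for the second term. Your extra care in checking that $\bdo{\lambda}{z}$ actually lies in the multiset (via removability and the pairwise distinctness of the $\bdo{\lambda}{x}$) is a detail the paper leaves implicit, but it does not change the argument.
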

\begin{proof}
We have,
\begin{align*}
\sg{\lambda}{z+1} = &\ K\Big(\dkl;L\big(\dkl\big)\setminus \{ \bdo{\lambda}{x} \mid x\in[z+1,\bott_\lambda] \};\lambda\Big) \hspace{1cm} (\text{by Definition~\ref{def:semiKk}})\\
=&\ K\Big(\dkl;\Big(L\big(\dkl\big)\setminus \{ \bdo{\lambda}{x} \mid x\in[z+1,\bott_\lambda] \}\Big)\setminus \bdo{\lambda}{z};\lambda\Big)\\
& - K\Big(\dkl;\Big(L\big(\dkl\big)\setminus \{ \bdo{\lambda}{x} \mid x\in[z+1,\bott_\lambda] \}\Big)\setminus \bdo{\lambda}{z};\lambda-\epsilon_{\bdo{\lambda}{z}}\Big)\\
& \hspace{8cm} (\text{by Lemma~\ref{lem:relk}~(\ref{it:relk3})})\\
=&\ K\Big(\dkl;L\big(\dkl\big)\setminus \{ \bdo{\lambda}{x} \mid x\in[z,\bott_\lambda] \};\lambda\Big)\\
& - K\Big(\dkl;L\big(\dkl\big)\setminus \{ \bdo{\lambda}{x} \mid x\in[z,\bott_\lambda] \};\lambda-\epsilon_{\bdo{\lambda}{z}}\Big)\\
=&\ K\Big(\dkl;L\big(\dkl\big)\setminus \{ \bdo{\lambda}{x} \mid x\in[z,\bott_\lambda] \};\lambda\Big)\\
& - L_{\bdo{\lambda}{z}}K\Big(\dkl;L\big(\dkl\big)\setminus \{ \bdo{\lambda}{x} \mid x\in[z,\bott_\lambda] \};\lambda\Big)\\
& \hspace{4.5cm} (\text{by Lemma~\ref{lem:LK} for the second summand})\\
=&\ \sg{\lambda}{z} - L_{\bdo{\lambda}{z}}\sg{\lambda}{z} \hspace{1cm} (\text{by Definition~\ref{def:semiKk}}),
\end{align*}
as required.
\end{proof}

\subsection{Lowering operators on weighted $K$-$k$-Schur functions} \mlabel{ss:LsKk}
In this subsection, we analyze the term $-L_{\bdo{\lambda}{z}}\sg{\lambda}{z}$ in Lemma~\mref{lem:ztoz1}. To this end, we generalize $L_{\bdo{\lambda}{z}}\sg{\lambda}{z}$ to $L_{\bDo{\lambda}{a}{z}}\sg{\lambda}{z}$ for any $a \in \ZZ_{\geq 1}$ such that $\bDo{\lambda}{a}{z}$ is defined, and distinguish two cases: $\bDo{\lambda}{a}{z} \in [\bott_\lambda + 1, \ell]$ (addressed in Proposition~\mref{prop:bigd}) and $\bDo{\lambda}{a}{z} \in [\bott_\lambda]$ (treated in Proposition~\mref{prop:smalld}).

\begin{prop}
Let $\lambda \in \pkl$, $z \in [\bott_\lambda]$, and $a \in \ZZ_{\geq 1}$ such that $\bDo{\lambda}{a-1}{z} \in [\bott_\lambda]$ and $\bDo{\lambda}{a}{z} \in [\bott_\lambda + 1, \ell]$. Define $\gamma := \lambda - \epsilon_{\bDo{\lambda}{a}{z}}$.
\begin{enumerate}[label={(\rm \roman*)}]
\item If $\gamma \in \pkl$, then
$
L_{\bDo{\lambda}{a}{z}} \sg{\lambda}{z} = \sg{\gamma}{z}.
$\mlabel{it:bigd1}

\item If $\gamma \notin \pkl$, and assuming $\lambda_z > \lambda_{z+1}$ when $\bDo{\lambda}{a-1}{z} \in [\bott_\lambda - 1]$, then
$
L_{\bDo{\lambda}{a}{z}} \sg{\lambda}{z} = 0.
$\mlabel{it:bigd2}
\end{enumerate}
\mlabel{prop:bigd}
\end{prop}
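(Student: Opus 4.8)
Throughout write $d:=\bDo{\lambda}{a}{z}$, $c:=\bDo{\lambda}{a-1}{z}$ and $\Psi:=\dkl$; then $\bdo{\lambda}{c}=d$, and since $c\in[\bott_\lambda]$ but $d\in[\bott_\lambda+1,\ell]$, the vertex $d$ is the bottom of the bounce path of $\Psi$ through $z$, and rows $d,d+1$ of $\Psi$ are empty. Put $M:=L(\Psi)\setminus\{\bdo{\lambda}{x}\mid x\in[z,\bott_\lambda]\}$, so that $\sg{\lambda}{z}=K(\Psi;M;\lambda)$ by Definition~\mref{def:semiKk}, whence $L_{d}\sg{\lambda}{z}=K(\Psi;M;\lambda-\epsilon_{d})$ by Lemma~\mref{lem:LK}. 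These two identities are the starting point for both parts.

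For part~\mref{it:bigd1} the plan is to show $\De{k}{\gamma}=\Psi$. Since $\gamma=\lambda-\epsilon_{d}$ differs from $\lambda$ only in coordinate $d>\bott_\lambda$, the only row of $\De{k}{\gamma}$ that can differ from that of $\Psi$ is row $d$, which is empty in $\Psi$ and can only shrink further; hence $\De{k}{\gamma}=\Psi$, so $\bott_{\gamma}=\bott_\lambda$ and $\bdo{\gamma}{x}=\bdo{\lambda}{x}$ for every $x$. Consequently $M=L(\De{k}{\gamma})\setminus\{\bdo{\gamma}{x}\mid x\in[z,\bott_{\gamma}]\}$, and therefore $L_{d}\sg{\lambda}{z}=K(\De{k}{\gamma};M;\gamma)=\sg{\gamma}{z}$ again by Definition~\mref{def:semiKk}. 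This part is routine once the stability $\De{k}{\gamma}=\Psi$ is in place.

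For part~\mref{it:bigd2}: since $\lambda\in\pkl$ and $\gamma=\lambda-\epsilon_{d}\notin\pkl$, either $d=\ell$ with $\lambda_{\ell}=0$, or $d<\ell$ with $\lambda_{d}=\lambda_{d+1}$. In the first case $\gamma_{\ell}=-1$, so the $\ell$-th row $(k_{\gamma_{\ell}+j-\ell}^{(\ell-1)})_{1\le j\le\ell}$ of the matrix defining $g_{\gamma}$ has entries of negative degree and vanishes identically; hence $g_{\gamma}=0$ and $L_{d}\sg{\lambda}{z}=0$. In the second case the plan is to apply the Mirror Lemma~\mref{lem:mirr1} with distinguished index $w:=d$ (this choice is forced: its condition~(5), $\gamma_{w}+1=\gamma_{w+1}$, cannot hold for $w\neq d$ because $\lambda$ is weakly decreasing). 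Writing the bounce path through $z$ as $z=p_{0},\,p_{1}=\bdo{\lambda}{z},\,\dots,\,p_{a-1}=c,\,p_{a}=d$, let $i\in[0,a]$ be least with $\lambda_{p_{j}}=\lambda_{p_{j}+1}$ for all $j\in[i,a-1]$, and set $y:=p_{i}$. One first checks $i\ge1$: this uses the extra hypothesis when $c\in[\bott_\lambda-1]$, together with the observation that $c=\bott_\lambda$ forces $\lambda_{\bott_\lambda}>\lambda_{\bott_\lambda+1}$ (otherwise row $\bott_\lambda+1$ of $\Psi$ would be the nonempty set $\{d+1,\dots,\ell\}$, using $d<\ell$); by minimality $\lambda_{p_{i-1}}>\lambda_{p_{i-1}+1}$ when $1\le i\le a-1$, and $\lambda_{c}>\lambda_{c+1}$ when $i=a$. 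Then the hypotheses of Lemma~\mref{lem:mirr1} are verified as follows: conditions~(3) (wall in rows $d,d+1$) and~(5) are immediate; conditions~(1),(2),(4) follow from Remark~\mref{re:rootfact}(2),(3), using that $\lambda_{p_{j}}=\lambda_{p_{j}+1}<k$ for each $j\in[i,a-1]$ while $\lambda_{p_{i-1}}>\lambda_{p_{i-1}+1}$, and that $\gamma$ agrees with $\lambda$ off coordinate $d$ (which does not lie on $\textup{path}_{\Psi}(y,c)$); when $i=a$ that path is empty and the ceiling in columns $d,d+1$ comes instead from $\lambda_{c}>\lambda_{c+1}$ forcing columns $d$ and $d+1$ of $\Psi$ to have equal length. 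Finally, for condition~(6) together with the extra requirement $m_{M}(y)+1=m_{M}(y+1)$, one uses the description $m_{M}(c')=(\text{length of column }c'\text{ of }\Psi)-\#\{x\in[z,\bott_\lambda]:\bdo{\lambda}{x}=c'\}$ (the count being at most $1$, since a fixed column has at most one removable root) to compute directly that $m_{M}(x)+1=m_{M}(x+1)$ all along $\textup{path}_{\Psi}(y,d)$; the ``$K=0$'' branch of Lemma~\mref{lem:mirr1} then yields $K(\Psi;M;\lambda-\epsilon_{d})=0$, i.e.\ $L_{d}\sg{\lambda}{z}=0$.

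The delicate point—and the expected main obstacle—is this last step of part~\mref{it:bigd2}: pinning down the correct starting vertex $y$ (equivalently, understanding how the ``walls'' $\lambda_{p_{j}}=\lambda_{p_{j}+1}$ propagate along the bounce path of $\dkl$) and then pushing through the bookkeeping in condition~(6), which compares the column lengths of $\dkl$ with the set of column indices that occur as values of $\bdo{\lambda}{\cdot}$ on $[z,\bott_\lambda]$. The hypothesis $\lambda_{z}>\lambda_{z+1}$ (active exactly when $c\in[\bott_\lambda-1]$) enters precisely to prevent $y$ from being pushed below $z$, i.e.\ outside the stretch of bounce path under control; the remaining verifications are the kind of diagram-chasing made routine by Remark~\mref{re:rootfact}.
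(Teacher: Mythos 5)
Your proof is correct and takes essentially the same route as the paper: part~\mref{it:bigd1} via the stability $\De{k}{\gamma}=\dkl$ (the modified coordinate lies below the bottom row, so the root ideal, its bottom, and all $\bdo{\lambda}{x}$ are unchanged), and part~\mref{it:bigd2} via the Mirror Lemma applied with the same ceiling column $y$, located at the last strict descent $\lambda_{p_{i-1}}>\lambda_{p_{i-1}+1}$ along the bounce path, with the hypothesis $\lambda_z>\lambda_{z+1}$ (or the forced inequality $\lambda_{\bott_\lambda}>\lambda_{\bott_\lambda+1}$ when $\bDo{\lambda}{a-1}{z}=\bott_\lambda$) guaranteeing that this descent exists. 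The one place you go beyond the paper is the degenerate case $d=\ell$ with $\gamma_\ell=-1$, which the paper silently excludes by asserting that $\gamma\notin\pkl$ forces $d<\ell$ and $\lambda_d=\lambda_{d+1}$; your vanishing argument there is sound, but should be phrased as ``every $g_{\gamma'}$ occurring in the operator expansion of $K(\dkl;M;\gamma)$ has last coordinate at most $\gamma_\ell<0$ and hence vanishes,'' since $K(\dkl;M;\gamma)$ is not literally $g_\gamma$.
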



\begin{proof}
\mref{it:bigd1} Suppose $\gamma= \lambda-\epsilon_{\bDo{\lambda}{a}{z}}\in\pkl$. By $\bDo{\lambda}{a}{z}\in[\bott_\lambda+1,\ell]$,
\begin{equation}
\dkl =\De{k}{\lambda-\epsilon_{\bDo{\lambda}{a}{z}}} = \De{k}{\gamma}, \quad
\bott_\lambda = \bott_\gamma,\quad
\bdo{\lambda}{x} = \bdo{\gamma}{x}, \quad \forall x\in[\bott_\lambda].
\mlabel{eq:bigdC1}
\end{equation}
It follows that
\begin{align*}
L_{\bDo{\lambda}{a}{z}}\sg{\lambda}{z}
=&\ L_{\bDo{\lambda}{a}{z}}K\Big(\dkl;L\big(\dkl\big)\setminus \{ \bdo{\lambda}{x} \mid x\in[z,\bott_\lambda] \};\lambda\Big)\hspace{1cm}
(\text{by Definition~\ref{def:semiKk}}) \\
=&\ K\Big(\dkl;L\big(\dkl\big)\setminus \{ \bdo{\lambda}{x} \mid x\in[z,\bott_\lambda] \};\lambda-\epsilon_{\bDo{\lambda}{a}{z}}\Big)\hspace{1cm} (\text{by Lemma~\ref{lem:LK}})\\
=&\ K\Big(\De{k}{\gamma};
L\big(\De{k}{\gamma}\big)\setminus \{ \bdo{\gamma}{x}
\mid x\in[z,\bott_{\gamma}] \};\gamma\Big)\hspace{2.3cm} (\text{by~(\ref{eq:bigdC1})})\\
=&\ \sg{\gamma}{z} \hspace{3cm} (\text{by Definition~\ref{def:semiKk}}).
\end{align*}

\mref{it:bigd2} Suppose $\gamma = \lambda-\epsilon_{\bDo{\lambda}{a}{z}}\notin\pkl$, that is, $\bDo{\lambda}{a}{z}<\ell$ and $\lambda_{\bDo{\lambda}{a}{z}} = \lambda_{\bDo{\lambda}{a}{z}+1}$.
Consider first $\bDo{\lambda}{a-1}{z} = \bott_\lambda$. Then
$$
\bdo{\lambda}{\bott_\lambda} = \bdo{\lambda}{\bDo{\lambda}{a-1}{z}} = \bDo{\lambda}{a}{z}<\ell,
$$
which implies that
there are at least two roots in row $\bott_\lambda$, and so
there is a ceiling in columns
\begin{equation}
\bDo{\lambda}{a}{z},\quad  \bDo{\lambda}{a}{z}+1 \,\text{ in }\, \dkl.
\mlabel{eq:ceil1}
\end{equation}

Consider next $\bDo{\lambda}{a-1}{z}\in[\bott_\lambda-1]$.
Then $\lambda_{z}>\lambda_{z+1}$ by the assumption hypothesis.
Define $c\in[0,a-1]$ such that
\begin{equation}
\lambda_{\bDo{\lambda}{c}{z}}>\lambda_{\bDo{\lambda}{c}{z}+1} \,\text{ and }\, \lambda_{\bDo{\lambda}{x}{z}} = \lambda_{\bDo{\lambda}{x}{z}+1}, \quad\forall x\in[c+1,a-1].
\mlabel{eq:csmallest}
\end{equation}
The existence of $c$ follows from the inequality $\lambda_z>\lambda_{z+1}$, which ensures that
$c$ can be taken to be at least zero, while its uniqueness is guaranteed by the definition~(\mref{eq:csmallest}).
By Remark~\mref{re:rootfact}~(3), there is a ceiling in columns
\begin{equation}
\bDo{\lambda}{c+1}{z}=\bdo{\lambda}{\bDo{\lambda}{c}{z}}, \quad \bDo{\lambda}{c+1}{z}+1 \,\text{ in }\, \dkl.
\mlabel{eq:ceil2}
\end{equation}
Since
\[
k\geq\lambda_{\bDo{\lambda}{c}{z}}>\lambda_{\bDo{\lambda}{c}{z}+1}
\geq \lambda_{\bDo{\lambda}{c+1}{z}} \geq \lambda_{\bDo{\lambda}{x}{z}},
\]
we have $$k> \lambda_{\bDo{\lambda}{x}{z}} = \lambda_{\bDo{\lambda}{x}{z}+1}.$$
Then, by Remark~\mref{re:rootfact}~(2),
there is a mirror in rows $\bDo{\lambda}{x}{z},\bDo{\lambda}{x}{z}+1$ for each $x\in[c+1,a-1]$. In other words, there is a mirror in rows \begin{equation}
x,\quad x+1,\,\text{ for each }\,  x\in{\rm path}_{\dkl}(\bDo{\lambda}{c+1}{z},\bDo{\lambda}{a-1}{z}).
\mlabel{eq:ceil3}
\end{equation}

By taking $c':= a$ in~(\mref{eq:ceil1}) or $c':= c+1$ in~(\mref{eq:ceil2}) and (\mref{eq:ceil3}), we conclude that there is a (unique) $c'\in[a]$ such that
\begin{enumerate}[label={(\alph*)}]
\item there is a ceiling in columns $\bDo{\lambda}{c'}{z}, \bDo{\lambda}{c'}{z}+1$ in $\dkl$;  \mlabel{it:cei}

\item there is a mirror in rows $x, x+1$ for each $x\in{\rm path}_{\dkl}(\bDo{\lambda}{c'}{z},\bDo{\lambda}{a-1}{z})$. \mlabel{it:mirror}
\end{enumerate}
Note that when $c'= a$, the path ${\rm path}_{\dkl}(\bDo{\lambda}{a}{z},\bDo{\lambda}{a-1}{z})$ is empty, so condition~\mref{it:mirror} holds vacuously.

Next, we are going to apply Mirror Lemma~\ref{lem:mirr1} with
$$\Psi := \dkl, \quad M:=L\big(\dkl\big)\setminus \{ \bdo{\lambda}{x} \mid x\in[z,\bott_\lambda] \}.$$
Indeed,
\begin{enumerate}
\item[(i)] $\dkl$ has a ceiling in columns $\bDo{\lambda}{c'}{z}, \bDo{\lambda}{c'}{z}+1$ for some $c'\in[a]$ by~\mref{it:cei};
\item[(ii)] $\dkl$ has a mirror in rows $x, x+1$ for each $x\in{\rm path}_{\dkl}(\bDo{\lambda}{c'}{z},\bDo{\lambda}{a-1}{z})$ by~\mref{it:mirror};
\item[(iii)] $\dkl$ has a wall in rows $\bDo{\lambda}{a}{z}, \bDo{\lambda}{a}{z}+1$, since there are no roots in rows $\bDo{\lambda}{a}{z}, \bDo{\lambda}{a}{z}+1$ by $\bDo{\lambda}{a}{z}, \bDo{\lambda}{a}{z}+1 > \bott_\lambda$;
\item[(iv)] $\gamma_x = \lambda_x = \lambda_{x+1} = \gamma_{x+1}$ for all $x\in{\rm path}_{\dkl}(\bDo{\lambda}{c'}{z},\bDo{\lambda}{a-1}{z})$;
\item[(v)] $\gamma_{\bDo{\lambda}{a}{z}}+1
= \lambda_{\bDo{\lambda}{a}{z}}-1 +1= \lambda_{\bDo{\lambda}{a}{z}+1}
= \gamma_{\bDo{\lambda}{a}{z}+1}$;
\item[(vi)] $m_M(x)+1 = m_M(x+1)$ for all $x\in{\rm path}_{\dkl}(\bDo{\lambda}{c'+1}{z},\bDo{\lambda}{a}{z})$;
\item[(vii)] $m_M(\bDo{\lambda}{c'}{z})+1 = m_M(\bDo{\lambda}{c'}{z}+1)$.
\end{enumerate}
Thus
\begin{align*}
L_{\bDo{\lambda}{a}{z}}\sg{\lambda}{z}
=&\ L_{\bDo{\lambda}{a}{z}}K\Big(\dkl;L\big(\dkl\big)\setminus \{ \bdo{\lambda}{x} \mid x\in[z,\bott_\lambda] \};\lambda\Big)\hspace{0.5cm}
(\text{by Definition~\ref{def:semiKk}}) \\
=&\ K\Big(\dkl;L\big(\dkl\big)\setminus \{ \bdo{\lambda}{x} \mid x\in[z,\bott_\lambda] \};\gamma\Big)\hspace{1cm} (\text{by Lemma~\ref{lem:LK}})\\
=&\ 0 \hspace{3cm}(\text{by Mirror Lemma~\ref{lem:mirr1})}).
\end{align*}
This completes the proof.
\end{proof}

We now illustrate Proposition~\mref{prop:bigd} with the following example.

\begin{exam}
Let
\[
k=7, \quad \ell=13, \quad \lambda = (7,6,5,5,4,4,4,3,3,3,2,1,1)\in{\rm P}_{13}^7.
\]
Then $\bott_\lambda = 8$. Taking $z=2\in[\bott_\lambda]$ and $a = 3$, we obtain
\begin{align*}
& (1) \qquad \bDo{\lambda}{3}{z} = 11\in [\bott_\lambda+1,\ell],\\
& (2) \qquad \bDo{\lambda}{2}{z} = 7 \in [\bott_\lambda-1] \,\text{ and }\, \lambda_2=6>5=\lambda_3,\\
& (3) \qquad \gamma = \lambda-\epsilon_{\bDo{\lambda}{3}{z}} = (7,6,5,5,4,4,4,3,3,3,\textcolor{green}{1},1,1)\in{\rm P}_{13}^7,
\end{align*}
and so
\begin{equation*}
L_{\bDo{\lambda}{3}{z}}\sg{\lambda}{z}=
L_{11}
\begin{tikzpicture}[scale=.3,line width=0.5pt,baseline=(a.base)]
\draw (0,7) rectangle (1,6);\node at(0.5,6.5){\tiny\( 7 \)};
\filldraw[red,draw=black] (1,7) rectangle (2,6);\node at(1.5,6.5){\tiny\( \bullet \)};
\filldraw[red,draw=black] (2,7) rectangle (3,6);\node at(2.5,6.5){\tiny\( \bullet \)};
\filldraw[red,draw=black] (3,7) rectangle (4,6);\node at(3.5,6.5){\tiny\( \bullet \)};
\filldraw[red,draw=black] (4,7) rectangle (5,6);\node at(4.5,6.5){\tiny\( \bullet \)};
\filldraw[red,draw=black] (5,7) rectangle (6,6);\node at(5.5,6.5){\tiny\( \bullet \)};
\filldraw[red,draw=black] (6,7) rectangle (7,6);\node at(6.5,6.5){\tiny\( \bullet \)};
\filldraw[red,draw=black] (7,7) rectangle (8,6);\node at(7.5,6.5){\tiny\( \bullet \)};
\filldraw[red,draw=black] (8,7) rectangle (9,6);\node at(8.5,6.5){\tiny\( \bullet \)};
\filldraw[red,draw=black] (9,7) rectangle (10,6);\node at(9.5,6.5){\tiny\( \bullet \)};
\filldraw[red,draw=black] (10,7) rectangle (11,6);\node at(10.5,6.5){\tiny\( \bullet \)};
\filldraw[red,draw=black] (11,7) rectangle (12,6);\node at(11.5,6.5){\tiny\( \bullet \)};
\filldraw[red,draw=black] (12,7) rectangle (13,6);\node at(12.5,6.5){\tiny\( \bullet \)};
\draw (0,6) rectangle (1,5);
\draw (1,6) rectangle (2,5);\node at(1.5,5.5){\tiny\( 6 \)};
\draw (2,6) rectangle (3,5);
\filldraw[red,draw=black] (3,6) rectangle (4,5);
\filldraw[red,draw=black] (4,6) rectangle (5,5);\node at(4.5,5.5){\tiny\( \bullet \)};
\filldraw[red,draw=black] (5,6) rectangle (6,5);\node at(5.5,5.5){\tiny\( \bullet \)};
\filldraw[red,draw=black] (6,6) rectangle (7,5);\node at(6.5,5.5){\tiny\( \bullet \)};
\filldraw[red,draw=black] (7,6) rectangle (8,5);\node at(7.5,5.5){\tiny\( \bullet \)};
\filldraw[red,draw=black] (8,6) rectangle (9,5);\node at(8.5,5.5){\tiny\( \bullet \)};
\filldraw[red,draw=black] (9,6) rectangle (10,5);\node at(9.5,5.5){\tiny\( \bullet \)};
\filldraw[red,draw=black] (10,6) rectangle (11,5);\node at(10.5,5.5){\tiny\( \bullet \)};
\filldraw[red,draw=black] (11,6) rectangle (12,5);\node at(11.5,5.5){\tiny\( \bullet \)};
\filldraw[red,draw=black] (12,6) rectangle (13,5);\node at(12.5,5.5){\tiny\( \bullet \)};
\draw (0,5) rectangle (1,4);
\draw (1,5) rectangle (2,4);
\draw (2,5) rectangle (3,4);\node at(2.5,4.5){\tiny\( 5 \)};
\draw (3,5) rectangle (4,4);
\draw (4,5) rectangle (5,4);
\filldraw[red,draw=black] (5,5) rectangle (6,4);
\filldraw[red,draw=black] (6,5) rectangle (7,4);\node at(6.5,4.5){\tiny\( \bullet \)};
\filldraw[red,draw=black] (7,5) rectangle (8,4);\node at(7.5,4.5){\tiny\( \bullet \)};
\filldraw[red,draw=black] (8,5) rectangle (9,4);\node at(8.5,4.5){\tiny\( \bullet \)};
\filldraw[red,draw=black] (9,5) rectangle (10,4);\node at(9.5,4.5){\tiny\( \bullet \)};
\filldraw[red,draw=black] (10,5) rectangle (11,4);\node at(10.5,4.5){\tiny\( \bullet \)};
\filldraw[red,draw=black] (11,5) rectangle (12,4);\node at(11.5,4.5){\tiny\( \bullet \)};
\filldraw[red,draw=black] (12,5) rectangle (13,4);\node at(12.5,4.5){\tiny\( \bullet \)};
\draw (0,4) rectangle (1,3);
\draw (1,4) rectangle (2,3);
\draw (2,4) rectangle (3,3);
\draw (3,4) rectangle (4,3);\node at(3.5,3.5){\tiny\( 5 \)};
\draw (4,4) rectangle (5,3);
\draw (5,4) rectangle (6,3);
\filldraw[red,draw=black] (6,4) rectangle (7,3);
\filldraw[red,draw=black] (7,4) rectangle (8,3);\node at(7.5,3.5){\tiny\( \bullet \)};
\filldraw[red,draw=black] (8,4) rectangle (9,3);\node at(8.5,3.5){\tiny\( \bullet \)};
\filldraw[red,draw=black] (9,4) rectangle (10,3);\node at(9.5,3.5){\tiny\( \bullet \)};
\filldraw[red,draw=black] (10,4) rectangle (11,3);\node at(10.5,3.5){\tiny\( \bullet \)};
\filldraw[red,draw=black] (11,4) rectangle (12,3);\node at(11.5,3.5){\tiny\( \bullet \)};
\filldraw[red,draw=black] (12,4) rectangle (13,3);\node at(12.5,3.5){\tiny\( \bullet \)};
\draw (0,3) rectangle (1,2);
\draw (1,3) rectangle (2,2);
\draw (2,3) rectangle (3,2);
\draw (3,3) rectangle (4,2);
\draw (4,3) rectangle (5,2);\node at(4.5,2.5){\tiny\( 4 \)};
\draw (5,3) rectangle (6,2);
\draw (6,3) rectangle (7,2);
\draw (7,3) rectangle (8,2);
\filldraw[red,draw=black] (8,3) rectangle (9,2);
\filldraw[red,draw=black] (9,3) rectangle (10,2);\node at(9.5,2.5){\tiny\( \bullet \)};
\filldraw[red,draw=black] (10,3) rectangle (11,2);\node at(10.5,2.5){\tiny\( \bullet \)};
\filldraw[red,draw=black] (11,3) rectangle (12,2);\node at(11.5,2.5){\tiny\( \bullet \)};
\filldraw[red,draw=black] (12,3) rectangle (13,2);\node at(12.5,2.5){\tiny\( \bullet \)};
\draw (0,2) rectangle (1,1);
\draw (1,2) rectangle (2,1);
\draw (2,2) rectangle (3,1);
\draw (3,2) rectangle (4,1);
\draw (4,2) rectangle (5,1);
\draw (5,2) rectangle (6,1);\node at(5.5,1.5){\tiny\( 4 \)};
\draw (6,2) rectangle (7,1);
\draw (7,2) rectangle (8,1);
\draw (8,2) rectangle (9,1);
\filldraw[red,draw=black] (9,2) rectangle (10,1);4
\filldraw[red,draw=black] (10,2) rectangle (11,1);\node at(10.5,1.5){\tiny\( \bullet \)};
\filldraw[red,draw=black] (11,2) rectangle (12,1);\node at(11.5,1.5){\tiny\( \bullet \)};
\filldraw[red,draw=black] (12,2) rectangle (13,1);\node at(12.5,1.5){\tiny\( \bullet \)};
\draw (0,1) rectangle (1,0);
\draw (1,1) rectangle (2,0);
\draw (2,1) rectangle (3,0);
\draw (3,1) rectangle (4,0);
\draw (4,1) rectangle (5,0);
\draw (5,1) rectangle (6,0);
\draw (6,1) rectangle (7,0);\node at(6.5,0.5){\tiny\( 4 \)};
\draw (7,1) rectangle (8,0);
\draw (8,1) rectangle (9,0);
\draw (9,1) rectangle (10,0);
\filldraw[red,draw=black] (10,1) rectangle (11,0);
\filldraw[red,draw=black] (11,1) rectangle (12,0);\node at(11.5,0.5){\tiny\( \bullet \)};
\filldraw[red,draw=black] (12,1) rectangle (13,0);\node at(12.5,0.5){\tiny\( \bullet \)};
\draw (0,0) rectangle (1,-1);
\draw (1,0) rectangle (2,-1);
\draw (2,0) rectangle (3,-1);
\draw (3,0) rectangle (4,-1);
\draw (4,0) rectangle (5,-1);
\draw (5,0) rectangle (6,-1);
\draw (6,0) rectangle (7,-1);
\draw (7,0) rectangle (8,-1);\node at(7.5,-0.5){\tiny\( 3 \)};
\draw (8,0) rectangle (9,-1);
\draw (9,0) rectangle (10,-1);
\draw (10,0) rectangle (11,-1);
\draw (11,0) rectangle (12,-1);
\filldraw[red,draw=black] (12,0) rectangle (13,-1);
\draw (0,-1) rectangle (1,-2);
\draw (1,-1) rectangle (2,-2);
\draw (2,-1) rectangle (3,-2);
\draw (3,-1) rectangle (4,-2);
\draw (4,-1) rectangle (5,-2);
\draw (5,-1) rectangle (6,-2);
\draw (6,-1) rectangle (7,-2);
\draw (7,-1) rectangle (8,-2);
\draw (8,-1) rectangle (9,-2);\node at(8.5,-1.5){\tiny\( 3 \)};
\draw (9,-1) rectangle (10,-2);
\draw (10,-1) rectangle (11,-2);
\draw (11,-1) rectangle (12,-2);
\draw (12,-1) rectangle (13,-2);
\draw (0,-2) rectangle (1,-3);
\draw (1,-2) rectangle (2,-3);
\draw (2,-2) rectangle (3,-3);
\draw (3,-2) rectangle (4,-3);
\draw (4,-2) rectangle (5,-3);
\draw (5,-2) rectangle (6,-3);
\draw (6,-2) rectangle (7,-3);
\draw (7,-2) rectangle (8,-3);
\draw (8,-2) rectangle (9,-3);
\draw (9,-2) rectangle (10,-3);\node at(9.5,-2.5){\tiny\( 3 \)};
\draw (10,-2) rectangle (11,-3);
\draw (11,-2) rectangle (12,-3);
\draw (12,-2) rectangle (13,-3);
\draw (0,-3) rectangle (1,-4);
\draw (1,-3) rectangle (2,-4);
\draw (2,-3) rectangle (3,-4);
\draw (3,-3) rectangle (4,-4);
\draw (4,-3) rectangle (5,-4);
\draw (5,-3) rectangle (6,-4);
\draw (6,-3) rectangle (7,-4);
\draw (7,-3) rectangle (8,-4);
\draw (8,-3) rectangle (9,-4);
\draw (9,-3) rectangle (10,-4);
\draw (10,-3) rectangle (11,-4);\node at(10.5,-3.5){\tiny\( 2 \)};
\draw (11,-3) rectangle (12,-4);
\draw (12,-3) rectangle (13,-4);
\draw (0,-4) rectangle (1,-5);
\draw (1,-4) rectangle (2,-5);
\draw (2,-4) rectangle (3,-5);
\draw (3,-4) rectangle (4,-5);
\draw (4,-4) rectangle (5,-5);
\draw (5,-4) rectangle (6,-5);
\draw (6,-4) rectangle (7,-5);
\draw (7,-4) rectangle (8,-5);
\draw (8,-4) rectangle (9,-5);
\draw (9,-4) rectangle (10,-5);
\draw (10,-4) rectangle (11,-5);
\draw (11,-4) rectangle (12,-5);\node at(11.5,-4.5){\tiny\( 1 \)};
\draw (12,-4) rectangle (13,-5);
\draw (0,-5) rectangle (1,-6);
\draw (1,-5) rectangle (2,-6);
\draw (2,-5) rectangle (3,-6);
\draw (3,-5) rectangle (4,-6);
\draw (4,-5) rectangle (5,-6);
\draw (5,-5) rectangle (6,-6);
\draw (6,-5) rectangle (7,-6);
\draw (7,-5) rectangle (8,-6);
\draw (8,-5) rectangle (9,-6);
\draw (9,-5) rectangle (10,-6);
\draw (10,-5) rectangle (11,-6);
\draw (11,-5) rectangle (12,-6);
\draw (12,-5) rectangle (13,-6);\node at(12.5,-5.5){\tiny\( 1 \)};
\draw[purple,line width=0.8pt] (4,3.5)--(6,3.5);
\draw[purple,line width=0.8pt] (3.5,4)--(3.5,5);
\draw[purple,line width=0.8pt] (6.5,3)--(6.5,1);
\draw[purple,line width=0.8pt] (4,3.5)--(6,3.5);
\draw[purple,line width=0.8pt] (7,0.5)--(10,0.5);
%
\draw[black,line width=0.8pt,->] (13,-3.5)--(13.8,-3.5);
\node at (14.2,-3.5){ \tiny{\textbf{$11$}} };
\draw[black,line width=0.8pt,->] (10.5,7)--(10.5,7.8);
\node at (10.5,8.2){ \tiny{\textbf{$11$}} };
\end{tikzpicture}
= \,
\begin{tikzpicture}[scale=.3,line width=0.5pt,baseline=(a.base)]
\draw (0,7) rectangle (1,6);\node at(0.5,6.5){\tiny\( 7 \)};
\filldraw[red,draw=black] (1,7) rectangle (2,6);\node at(1.5,6.5){\tiny\( \bullet \)};
\filldraw[red,draw=black] (2,7) rectangle (3,6);\node at(2.5,6.5){\tiny\( \bullet \)};
\filldraw[red,draw=black] (3,7) rectangle (4,6);\node at(3.5,6.5){\tiny\( \bullet \)};
\filldraw[red,draw=black] (4,7) rectangle (5,6);\node at(4.5,6.5){\tiny\( \bullet \)};
\filldraw[red,draw=black] (5,7) rectangle (6,6);\node at(5.5,6.5){\tiny\( \bullet \)};
\filldraw[red,draw=black] (6,7) rectangle (7,6);\node at(6.5,6.5){\tiny\( \bullet \)};
\filldraw[red,draw=black] (7,7) rectangle (8,6);\node at(7.5,6.5){\tiny\( \bullet \)};
\filldraw[red,draw=black] (8,7) rectangle (9,6);\node at(8.5,6.5){\tiny\( \bullet \)};
\filldraw[red,draw=black] (9,7) rectangle (10,6);\node at(9.5,6.5){\tiny\( \bullet \)};
\filldraw[red,draw=black] (10,7) rectangle (11,6);\node at(10.5,6.5){\tiny\( \bullet \)};
\filldraw[red,draw=black] (11,7) rectangle (12,6);\node at(11.5,6.5){\tiny\( \bullet \)};
\filldraw[red,draw=black] (12,7) rectangle (13,6);\node at(12.5,6.5){\tiny\( \bullet \)};
\draw (0,6) rectangle (1,5);
\draw (1,6) rectangle (2,5);\node at(1.5,5.5){\tiny\( 6 \)};
\draw (2,6) rectangle (3,5);
\filldraw[red,draw=black] (3,6) rectangle (4,5);
\filldraw[red,draw=black] (4,6) rectangle (5,5);\node at(4.5,5.5){\tiny\( \bullet \)};
\filldraw[red,draw=black] (5,6) rectangle (6,5);\node at(5.5,5.5){\tiny\( \bullet \)};
\filldraw[red,draw=black] (6,6) rectangle (7,5);\node at(6.5,5.5){\tiny\( \bullet \)};
\filldraw[red,draw=black] (7,6) rectangle (8,5);\node at(7.5,5.5){\tiny\( \bullet \)};
\filldraw[red,draw=black] (8,6) rectangle (9,5);\node at(8.5,5.5){\tiny\( \bullet \)};
\filldraw[red,draw=black] (9,6) rectangle (10,5);\node at(9.5,5.5){\tiny\( \bullet \)};
\filldraw[red,draw=black] (10,6) rectangle (11,5);\node at(10.5,5.5){\tiny\( \bullet \)};
\filldraw[red,draw=black] (11,6) rectangle (12,5);\node at(11.5,5.5){\tiny\( \bullet \)};
\filldraw[red,draw=black] (12,6) rectangle (13,5);\node at(12.5,5.5){\tiny\( \bullet \)};
\draw (0,5) rectangle (1,4);
\draw (1,5) rectangle (2,4);
\draw (2,5) rectangle (3,4);\node at(2.5,4.5){\tiny\( 5 \)};
\draw (3,5) rectangle (4,4);
\draw (4,5) rectangle (5,4);
\filldraw[red,draw=black] (5,5) rectangle (6,4);
\filldraw[red,draw=black] (6,5) rectangle (7,4);\node at(6.5,4.5){\tiny\( \bullet \)};
\filldraw[red,draw=black] (7,5) rectangle (8,4);\node at(7.5,4.5){\tiny\( \bullet \)};
\filldraw[red,draw=black] (8,5) rectangle (9,4);\node at(8.5,4.5){\tiny\( \bullet \)};
\filldraw[red,draw=black] (9,5) rectangle (10,4);\node at(9.5,4.5){\tiny\( \bullet \)};
\filldraw[red,draw=black] (10,5) rectangle (11,4);\node at(10.5,4.5){\tiny\( \bullet \)};
\filldraw[red,draw=black] (11,5) rectangle (12,4);\node at(11.5,4.5){\tiny\( \bullet \)};
\filldraw[red,draw=black] (12,5) rectangle (13,4);\node at(12.5,4.5){\tiny\( \bullet \)};
\draw (0,4) rectangle (1,3);
\draw (1,4) rectangle (2,3);
\draw (2,4) rectangle (3,3);
\draw (3,4) rectangle (4,3);\node at(3.5,3.5){\tiny\( 5 \)};
\draw (4,4) rectangle (5,3);
\draw (5,4) rectangle (6,3);
\filldraw[red,draw=black] (6,4) rectangle (7,3);
\filldraw[red,draw=black] (7,4) rectangle (8,3);\node at(7.5,3.5){\tiny\( \bullet \)};
\filldraw[red,draw=black] (8,4) rectangle (9,3);\node at(8.5,3.5){\tiny\( \bullet \)};
\filldraw[red,draw=black] (9,4) rectangle (10,3);\node at(9.5,3.5){\tiny\( \bullet \)};
\filldraw[red,draw=black] (10,4) rectangle (11,3);\node at(10.5,3.5){\tiny\( \bullet \)};
\filldraw[red,draw=black] (11,4) rectangle (12,3);\node at(11.5,3.5){\tiny\( \bullet \)};
\filldraw[red,draw=black] (12,4) rectangle (13,3);\node at(12.5,3.5){\tiny\( \bullet \)};
\draw (0,3) rectangle (1,2);
\draw (1,3) rectangle (2,2);
\draw (2,3) rectangle (3,2);
\draw (3,3) rectangle (4,2);
\draw (4,3) rectangle (5,2);\node at(4.5,2.5){\tiny\( 4 \)};
\draw (5,3) rectangle (6,2);
\draw (6,3) rectangle (7,2);
\draw (7,3) rectangle (8,2);
\filldraw[red,draw=black] (8,3) rectangle (9,2);
\filldraw[red,draw=black] (9,3) rectangle (10,2);\node at(9.5,2.5){\tiny\( \bullet \)};
\filldraw[red,draw=black] (10,3) rectangle (11,2);\node at(10.5,2.5){\tiny\( \bullet \)};
\filldraw[red,draw=black] (11,3) rectangle (12,2);\node at(11.5,2.5){\tiny\( \bullet \)};
\filldraw[red,draw=black] (12,3) rectangle (13,2);\node at(12.5,2.5){\tiny\( \bullet \)};
\draw (0,2) rectangle (1,1);
\draw (1,2) rectangle (2,1);
\draw (2,2) rectangle (3,1);
\draw (3,2) rectangle (4,1);
\draw (4,2) rectangle (5,1);
\draw (5,2) rectangle (6,1);\node at(5.5,1.5){\tiny\( 4 \)};
\draw (6,2) rectangle (7,1);
\draw (7,2) rectangle (8,1);
\draw (8,2) rectangle (9,1);
\filldraw[red,draw=black] (9,2) rectangle (10,1);4
\filldraw[red,draw=black] (10,2) rectangle (11,1);\node at(10.5,1.5){\tiny\( \bullet \)};
\filldraw[red,draw=black] (11,2) rectangle (12,1);\node at(11.5,1.5){\tiny\( \bullet \)};
\filldraw[red,draw=black] (12,2) rectangle (13,1);\node at(12.5,1.5){\tiny\( \bullet \)};
\draw (0,1) rectangle (1,0);
\draw (1,1) rectangle (2,0);
\draw (2,1) rectangle (3,0);
\draw (3,1) rectangle (4,0);
\draw (4,1) rectangle (5,0);
\draw (5,1) rectangle (6,0);
\draw (6,1) rectangle (7,0);\node at(6.5,0.5){\tiny\( 4 \)};
\draw (7,1) rectangle (8,0);
\draw (8,1) rectangle (9,0);
\draw (9,1) rectangle (10,0);
\filldraw[red,draw=black] (10,1) rectangle (11,0);
\filldraw[red,draw=black] (11,1) rectangle (12,0);\node at(11.5,0.5){\tiny\( \bullet \)};
\filldraw[red,draw=black] (12,1) rectangle (13,0);\node at(12.5,0.5){\tiny\( \bullet \)};
\draw (0,0) rectangle (1,-1);
\draw (1,0) rectangle (2,-1);
\draw (2,0) rectangle (3,-1);
\draw (3,0) rectangle (4,-1);
\draw (4,0) rectangle (5,-1);
\draw (5,0) rectangle (6,-1);
\draw (6,0) rectangle (7,-1);
\draw (7,0) rectangle (8,-1);\node at(7.5,-0.5){\tiny\( 3 \)};
\draw (8,0) rectangle (9,-1);
\draw (9,0) rectangle (10,-1);
\draw (10,0) rectangle (11,-1);
\draw (11,0) rectangle (12,-1);
\filldraw[red,draw=black] (12,0) rectangle (13,-1);
\draw (0,-1) rectangle (1,-2);
\draw (1,-1) rectangle (2,-2);
\draw (2,-1) rectangle (3,-2);
\draw (3,-1) rectangle (4,-2);
\draw (4,-1) rectangle (5,-2);
\draw (5,-1) rectangle (6,-2);
\draw (6,-1) rectangle (7,-2);
\draw (7,-1) rectangle (8,-2);
\draw (8,-1) rectangle (9,-2);\node at(8.5,-1.5){\tiny\( 3 \)};
\draw (9,-1) rectangle (10,-2);
\draw (10,-1) rectangle (11,-2);
\draw (11,-1) rectangle (12,-2);
\draw (12,-1) rectangle (13,-2);
\draw (0,-2) rectangle (1,-3);
\draw (1,-2) rectangle (2,-3);
\draw (2,-2) rectangle (3,-3);
\draw (3,-2) rectangle (4,-3);
\draw (4,-2) rectangle (5,-3);
\draw (5,-2) rectangle (6,-3);
\draw (6,-2) rectangle (7,-3);
\draw (7,-2) rectangle (8,-3);
\draw (8,-2) rectangle (9,-3);
\draw (9,-2) rectangle (10,-3);\node at(9.5,-2.5){\tiny\( 3 \)};
\draw (10,-2) rectangle (11,-3);
\draw (11,-2) rectangle (12,-3);
\draw (12,-2) rectangle (13,-3);
\draw (0,-3) rectangle (1,-4);
\draw (1,-3) rectangle (2,-4);
\draw (2,-3) rectangle (3,-4);
\draw (3,-3) rectangle (4,-4);
\draw (4,-3) rectangle (5,-4);
\draw (5,-3) rectangle (6,-4);
\draw (6,-3) rectangle (7,-4);
\draw (7,-3) rectangle (8,-4);
\draw (8,-3) rectangle (9,-4);
\draw (9,-3) rectangle (10,-4);
\draw (10,-3) rectangle (11,-4);\node at(10.5,-3.5){\tiny\( \textcolor{green}{1} \)};
\draw (11,-3) rectangle (12,-4);
\draw (12,-3) rectangle (13,-4);
\draw (0,-4) rectangle (1,-5);
\draw (1,-4) rectangle (2,-5);
\draw (2,-4) rectangle (3,-5);
\draw (3,-4) rectangle (4,-5);
\draw (4,-4) rectangle (5,-5);
\draw (5,-4) rectangle (6,-5);
\draw (6,-4) rectangle (7,-5);
\draw (7,-4) rectangle (8,-5);
\draw (8,-4) rectangle (9,-5);
\draw (9,-4) rectangle (10,-5);
\draw (10,-4) rectangle (11,-5);
\draw (11,-4) rectangle (12,-5);\node at(11.5,-4.5){\tiny\( 1 \)};
\draw (12,-4) rectangle (13,-5);
\draw (0,-5) rectangle (1,-6);
\draw (1,-5) rectangle (2,-6);
\draw (2,-5) rectangle (3,-6);
\draw (3,-5) rectangle (4,-6);
\draw (4,-5) rectangle (5,-6);
\draw (5,-5) rectangle (6,-6);
\draw (6,-5) rectangle (7,-6);
\draw (7,-5) rectangle (8,-6);
\draw (8,-5) rectangle (9,-6);
\draw (9,-5) rectangle (10,-6);
\draw (10,-5) rectangle (11,-6);
\draw (11,-5) rectangle (12,-6);
\draw (12,-5) rectangle (13,-6);\node at(12.5,-5.5){\tiny\( 1 \)};
\end{tikzpicture}
\, .
\end{equation*}
For $\lambda = (7,6,5,5,4,4,4,3,3,3,2,\textcolor{red}{2},1)\in{\rm P}_{13}^7$, $$\gamma = \lambda-\epsilon_{11} = (7,6,5,5,4,4,4,3,3,3,\textcolor{green}{1},2,1)\notin{\rm P}_{13}^7.$$
Thus
\begin{equation*}
L_{\bDo{\lambda}{3}{z}}\sg{\lambda}{z}=
L_{11}
\begin{tikzpicture}[scale=.3,line width=0.5pt,baseline=(a.base)]
\draw (0,7) rectangle (1,6);\node at(0.5,6.5){\tiny\( 7 \)};
\filldraw[red,draw=black] (1,7) rectangle (2,6);\node at(1.5,6.5){\tiny\( \bullet \)};
\filldraw[red,draw=black] (2,7) rectangle (3,6);\node at(2.5,6.5){\tiny\( \bullet \)};
\filldraw[red,draw=black] (3,7) rectangle (4,6);\node at(3.5,6.5){\tiny\( \bullet \)};
\filldraw[red,draw=black] (4,7) rectangle (5,6);\node at(4.5,6.5){\tiny\( \bullet \)};
\filldraw[red,draw=black] (5,7) rectangle (6,6);\node at(5.5,6.5){\tiny\( \bullet \)};
\filldraw[red,draw=black] (6,7) rectangle (7,6);\node at(6.5,6.5){\tiny\( \bullet \)};
\filldraw[red,draw=black] (7,7) rectangle (8,6);\node at(7.5,6.5){\tiny\( \bullet \)};
\filldraw[red,draw=black] (8,7) rectangle (9,6);\node at(8.5,6.5){\tiny\( \bullet \)};
\filldraw[red,draw=black] (9,7) rectangle (10,6);\node at(9.5,6.5){\tiny\( \bullet \)};
\filldraw[red,draw=black] (10,7) rectangle (11,6);\node at(10.5,6.5){\tiny\( \bullet \)};
\filldraw[red,draw=black] (11,7) rectangle (12,6);\node at(11.5,6.5){\tiny\( \bullet \)};
\filldraw[red,draw=black] (12,7) rectangle (13,6);\node at(12.5,6.5){\tiny\( \bullet \)};
\draw (0,6) rectangle (1,5);
\draw (1,6) rectangle (2,5);\node at(1.5,5.5){\tiny\( 6 \)};
\draw (2,6) rectangle (3,5);
\filldraw[red,draw=black] (3,6) rectangle (4,5);
\filldraw[red,draw=black] (4,6) rectangle (5,5);\node at(4.5,5.5){\tiny\( \bullet \)};
\filldraw[red,draw=black] (5,6) rectangle (6,5);\node at(5.5,5.5){\tiny\( \bullet \)};
\filldraw[red,draw=black] (6,6) rectangle (7,5);\node at(6.5,5.5){\tiny\( \bullet \)};
\filldraw[red,draw=black] (7,6) rectangle (8,5);\node at(7.5,5.5){\tiny\( \bullet \)};
\filldraw[red,draw=black] (8,6) rectangle (9,5);\node at(8.5,5.5){\tiny\( \bullet \)};
\filldraw[red,draw=black] (9,6) rectangle (10,5);\node at(9.5,5.5){\tiny\( \bullet \)};
\filldraw[red,draw=black] (10,6) rectangle (11,5);\node at(10.5,5.5){\tiny\( \bullet \)};
\filldraw[red,draw=black] (11,6) rectangle (12,5);\node at(11.5,5.5){\tiny\( \bullet \)};
\filldraw[red,draw=black] (12,6) rectangle (13,5);\node at(12.5,5.5){\tiny\( \bullet \)};
\draw (0,5) rectangle (1,4);
\draw (1,5) rectangle (2,4);
\draw (2,5) rectangle (3,4);\node at(2.5,4.5){\tiny\( 5 \)};
\draw (3,5) rectangle (4,4);
\draw (4,5) rectangle (5,4);
\filldraw[red,draw=black] (5,5) rectangle (6,4);
\filldraw[red,draw=black] (6,5) rectangle (7,4);\node at(6.5,4.5){\tiny\( \bullet \)};
\filldraw[red,draw=black] (7,5) rectangle (8,4);\node at(7.5,4.5){\tiny\( \bullet \)};
\filldraw[red,draw=black] (8,5) rectangle (9,4);\node at(8.5,4.5){\tiny\( \bullet \)};
\filldraw[red,draw=black] (9,5) rectangle (10,4);\node at(9.5,4.5){\tiny\( \bullet \)};
\filldraw[red,draw=black] (10,5) rectangle (11,4);\node at(10.5,4.5){\tiny\( \bullet \)};
\filldraw[red,draw=black] (11,5) rectangle (12,4);\node at(11.5,4.5){\tiny\( \bullet \)};
\filldraw[red,draw=black] (12,5) rectangle (13,4);\node at(12.5,4.5){\tiny\( \bullet \)};
\draw (0,4) rectangle (1,3);
\draw (1,4) rectangle (2,3);
\draw (2,4) rectangle (3,3);
\draw (3,4) rectangle (4,3);\node at(3.5,3.5){\tiny\( 5 \)};
\draw (4,4) rectangle (5,3);
\draw (5,4) rectangle (6,3);
\filldraw[red,draw=black] (6,4) rectangle (7,3);
\filldraw[red,draw=black] (7,4) rectangle (8,3);\node at(7.5,3.5){\tiny\( \bullet \)};
\filldraw[red,draw=black] (8,4) rectangle (9,3);\node at(8.5,3.5){\tiny\( \bullet \)};
\filldraw[red,draw=black] (9,4) rectangle (10,3);\node at(9.5,3.5){\tiny\( \bullet \)};
\filldraw[red,draw=black] (10,4) rectangle (11,3);\node at(10.5,3.5){\tiny\( \bullet \)};
\filldraw[red,draw=black] (11,4) rectangle (12,3);\node at(11.5,3.5){\tiny\( \bullet \)};
\filldraw[red,draw=black] (12,4) rectangle (13,3);\node at(12.5,3.5){\tiny\( \bullet \)};
\draw (0,3) rectangle (1,2);
\draw (1,3) rectangle (2,2);
\draw (2,3) rectangle (3,2);
\draw (3,3) rectangle (4,2);
\draw (4,3) rectangle (5,2);\node at(4.5,2.5){\tiny\( 4 \)};
\draw (5,3) rectangle (6,2);
\draw (6,3) rectangle (7,2);
\draw (7,3) rectangle (8,2);
\filldraw[red,draw=black] (8,3) rectangle (9,2);
\filldraw[red,draw=black] (9,3) rectangle (10,2);\node at(9.5,2.5){\tiny\( \bullet \)};
\filldraw[red,draw=black] (10,3) rectangle (11,2);\node at(10.5,2.5){\tiny\( \bullet \)};
\filldraw[red,draw=black] (11,3) rectangle (12,2);\node at(11.5,2.5){\tiny\( \bullet \)};
\filldraw[red,draw=black] (12,3) rectangle (13,2);\node at(12.5,2.5){\tiny\( \bullet \)};
\draw (0,2) rectangle (1,1);
\draw (1,2) rectangle (2,1);
\draw (2,2) rectangle (3,1);
\draw (3,2) rectangle (4,1);
\draw (4,2) rectangle (5,1);
\draw (5,2) rectangle (6,1);\node at(5.5,1.5){\tiny\( 4 \)};
\draw (6,2) rectangle (7,1);
\draw (7,2) rectangle (8,1);
\draw (8,2) rectangle (9,1);
\filldraw[red,draw=black] (9,2) rectangle (10,1);4
\filldraw[red,draw=black] (10,2) rectangle (11,1);\node at(10.5,1.5){\tiny\( \bullet \)};
\filldraw[red,draw=black] (11,2) rectangle (12,1);\node at(11.5,1.5){\tiny\( \bullet \)};
\filldraw[red,draw=black] (12,2) rectangle (13,1);\node at(12.5,1.5){\tiny\( \bullet \)};
\draw (0,1) rectangle (1,0);
\draw (1,1) rectangle (2,0);
\draw (2,1) rectangle (3,0);
\draw (3,1) rectangle (4,0);
\draw (4,1) rectangle (5,0);
\draw (5,1) rectangle (6,0);
\draw (6,1) rectangle (7,0);\node at(6.5,0.5){\tiny\( 4 \)};
\draw (7,1) rectangle (8,0);
\draw (8,1) rectangle (9,0);
\draw (9,1) rectangle (10,0);
\filldraw[red,draw=black] (10,1) rectangle (11,0);
\filldraw[red,draw=black] (11,1) rectangle (12,0);\node at(11.5,0.5){\tiny\( \bullet \)};
\filldraw[red,draw=black] (12,1) rectangle (13,0);\node at(12.5,0.5){\tiny\( \bullet \)};
\draw (0,0) rectangle (1,-1);
\draw (1,0) rectangle (2,-1);
\draw (2,0) rectangle (3,-1);
\draw (3,0) rectangle (4,-1);
\draw (4,0) rectangle (5,-1);
\draw (5,0) rectangle (6,-1);
\draw (6,0) rectangle (7,-1);
\draw (7,0) rectangle (8,-1);\node at(7.5,-0.5){\tiny\( 3 \)};
\draw (8,0) rectangle (9,-1);
\draw (9,0) rectangle (10,-1);
\draw (10,0) rectangle (11,-1);
\draw (11,0) rectangle (12,-1);
\filldraw[red,draw=black] (12,0) rectangle (13,-1);
\draw (0,-1) rectangle (1,-2);
\draw (1,-1) rectangle (2,-2);
\draw (2,-1) rectangle (3,-2);
\draw (3,-1) rectangle (4,-2);
\draw (4,-1) rectangle (5,-2);
\draw (5,-1) rectangle (6,-2);
\draw (6,-1) rectangle (7,-2);
\draw (7,-1) rectangle (8,-2);
\draw (8,-1) rectangle (9,-2);\node at(8.5,-1.5){\tiny\( 3 \)};
\draw (9,-1) rectangle (10,-2);
\draw (10,-1) rectangle (11,-2);
\draw (11,-1) rectangle (12,-2);
\draw (12,-1) rectangle (13,-2);
\draw (0,-2) rectangle (1,-3);
\draw (1,-2) rectangle (2,-3);
\draw (2,-2) rectangle (3,-3);
\draw (3,-2) rectangle (4,-3);
\draw (4,-2) rectangle (5,-3);
\draw (5,-2) rectangle (6,-3);
\draw (6,-2) rectangle (7,-3);
\draw (7,-2) rectangle (8,-3);
\draw (8,-2) rectangle (9,-3);
\draw (9,-2) rectangle (10,-3);\node at(9.5,-2.5){\tiny\( 3 \)};
\draw (10,-2) rectangle (11,-3);
\draw (11,-2) rectangle (12,-3);
\draw (12,-2) rectangle (13,-3);
\draw (0,-3) rectangle (1,-4);
\draw (1,-3) rectangle (2,-4);
\draw (2,-3) rectangle (3,-4);
\draw (3,-3) rectangle (4,-4);
\draw (4,-3) rectangle (5,-4);
\draw (5,-3) rectangle (6,-4);
\draw (6,-3) rectangle (7,-4);
\draw (7,-3) rectangle (8,-4);
\draw (8,-3) rectangle (9,-4);
\draw (9,-3) rectangle (10,-4);
\draw (10,-3) rectangle (11,-4);\node at(10.5,-3.5){\tiny\( 2 \)};
\draw (11,-3) rectangle (12,-4);
\draw (12,-3) rectangle (13,-4);
\draw (0,-4) rectangle (1,-5);
\draw (1,-4) rectangle (2,-5);
\draw (2,-4) rectangle (3,-5);
\draw (3,-4) rectangle (4,-5);
\draw (4,-4) rectangle (5,-5);
\draw (5,-4) rectangle (6,-5);
\draw (6,-4) rectangle (7,-5);
\draw (7,-4) rectangle (8,-5);
\draw (8,-4) rectangle (9,-5);
\draw (9,-4) rectangle (10,-5);
\draw (10,-4) rectangle (11,-5);
\draw (11,-4) rectangle (12,-5);\node at(11.5,-4.5){\tiny\( 2 \)};
\draw (12,-4) rectangle (13,-5);
\draw (0,-5) rectangle (1,-6);
\draw (1,-5) rectangle (2,-6);
\draw (2,-5) rectangle (3,-6);
\draw (3,-5) rectangle (4,-6);
\draw (4,-5) rectangle (5,-6);
\draw (5,-5) rectangle (6,-6);
\draw (6,-5) rectangle (7,-6);
\draw (7,-5) rectangle (8,-6);
\draw (8,-5) rectangle (9,-6);
\draw (9,-5) rectangle (10,-6);
\draw (10,-5) rectangle (11,-6);
\draw (11,-5) rectangle (12,-6);
\draw (12,-5) rectangle (13,-6);\node at(12.5,-5.5){\tiny\( 1 \)};
\draw[purple,line width=0.8pt] (4,3.5)--(6,3.5);
\draw[purple,line width=0.8pt] (3.5,4)--(3.5,5);
\draw[purple,line width=0.8pt] (6.5,3)--(6.5,1);
\draw[purple,line width=0.8pt] (4,3.5)--(6,3.5);
\draw[purple,line width=0.8pt] (7,0.5)--(10,0.5);
%
\draw[black,line width=0.8pt,->] (13,-3.5)--(13.8,-3.5);
\node at (14.2,-3.5){ \tiny{\textbf{$11$}} };
\draw[black,line width=0.8pt,->] (10.5,7)--(10.5,7.8);
\node at (10.5,8.2){ \tiny{\textbf{$11$}} };
\end{tikzpicture}
= 0.
\end{equation*}
\hspace*{\fill}$\square$
\end{exam}

By replacing the assumption $\bDo{\lambda}{a}{z} \in [\bott_\lambda + 1, \ell]$ in Proposition~\mref{prop:bigd} with $\bDo{\lambda}{a}{z} \in [\bott_\lambda]$, we obtain the following result.

\begin{prop}
Let $\lambda\in\pkl$, $z\in[\bott_\lambda-1]$, and $a\in\ZZ_{\geq1}$ such that $\bDo{\lambda}{a}{z}\in[\bott_\lambda]$.
Denote $\gamma:= \lambda-\epsilon_{\bDo{\lambda}{a}{z}}$.
\begin{enumerate}[label=(\roman*)]
\item If $\gamma\in\pkl$, then
$$
L_{\bDo{\lambda}{a}{z}}\sg{\lambda}{z} = \big(1-L_{\bDo{\gamma}{a+1}{z}}\big)\sg{\gamma}{z} + L_{\bDo{\lambda}{a+1}{z}}\sg{\lambda}{z}.
$$
Here we use the convention that $L_{\bDo{\gamma}{a+1}{z}} := 0$ when $\bDo{\gamma}{a+1}{z}$ is undefined.
\mlabel{it:smalld1}

\item If $\gamma\notin\pkl$, and assuming $\lambda_{z}>\lambda_{z+1}$, then
$$
L_{\bDo{\lambda}{a}{z}}\sg{\lambda}{z} = L_{\bDo{\lambda}{a+1}{z}}\sg{\lambda}{z}.
$$
\mlabel{it:smalld2}
\end{enumerate}
\mlabel{prop:smalld}
\end{prop}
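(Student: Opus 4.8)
The plan is to strip off the single removable box at $(j,j')$, where $j:=\bDo{\lambda}{a}{z}$ and $j':=\bdo{\lambda}{j}=\bDo{\lambda}{a+1}{z}$. Since $j\in[\bott_\lambda]$, Remark~\mref{re:rootfact}~(1) gives that $(j,j')$ is a removable root of $\dkl$, and $j'>j$. Put $M:=L\big(\dkl\big)\setminus\{\bdo{\lambda}{x}\mid x\in[z,\bott_\lambda]\}$, so $\sg{\lambda}{z}=K(\dkl;M;\lambda)$ by Definition~\mref{def:semiKk}. By Lemma~\mref{lem:LK}, $L_{j}\sg{\lambda}{z}=K(\dkl;M;\lambda-\epsilon_{j})=K(\dkl;M;\gamma)$. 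Applying Lemma~\mref{lem:relk}~(\mref{it:relk1}) to the removable root $(j,j')$, noting $\gamma+\varepsilon_{(j,j')}=\gamma+\epsilon_{j}-\epsilon_{j'}=\lambda-\epsilon_{j'}$, and using Lemma~\mref{lem:LK} once more, I obtain
\begin{equation*}
L_{j}\sg{\lambda}{z}=K\big(\dkl\setminus(j,j');M;\gamma\big)+L_{\bDo{\lambda}{a+1}{z}}\sg{\lambda}{z}.
\end{equation*}
Thus everything reduces to evaluating $T:=K\big(\dkl\setminus(j,j');M;\gamma\big)$ in the two cases.

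\textbf{Part~(ii): $T=0$.}
Here $\lambda_{j}=\lambda_{j+1}$ and $j<\ell$, and I would apply the Mirror Lemma~\mref{lem:mirr1} to $\Psi':=\dkl\setminus(j,j')$ with the lemma's distinguished index taken to be $\zeta:=j$. Condition~(5) holds since $\gamma_{j}+1=\lambda_{j}=\lambda_{j+1}=\gamma_{j+1}$, and the wall in rows $j,j+1$ (condition~(3)) holds because, using $\lambda_{j}=\lambda_{j+1}$, both of these rows of $\Psi'$ have their leftmost root in column $j'+1$. Deleting $(j,j')$ changes only row $j$ and column $j'$, so the bounce path of $\Psi'$ through $j$ ascends through exactly the vertices $z=\bDo{\lambda}{0}{z},\dots,\bDo{\lambda}{a-1}{z}$, and $\mathrm{up}_{\Psi'}(j)=\bDo{\lambda}{a-1}{z}$. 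The ceiling/mirror chain rising from some $\bDo{\lambda}{c'}{z}$ with $c'\in[a]$, together with the multiset conditions~(6)--(7) and the vanishing trigger $m_{M}(\bDo{\lambda}{c'}{z})+1=m_{M}(\bDo{\lambda}{c'}{z}+1)$, are produced exactly as in the proof of Proposition~\mref{prop:bigd}~(\mref{it:bigd2}). Two facts legitimize this transfer from $\dkl$ to $\Psi'$: first, $\lambda_{z}>\lambda_{z+1}$ forces $\lambda_{j}<k$ (otherwise the whole bounce path from $z$ would carry $\lambda$-value $k$, contradicting $\lambda_{z}>\lambda_{z+1}$), hence $j'\geq j+2$, so the ceiling column $j+1$ is untouched by the deletion; second, all mirror rows produced lie weakly below $\bDo{\lambda}{a-1}{z}<j$ (or the mirror path is empty when $c'=a$), a range where $\gamma$ agrees with $\lambda$, so condition~(4) reduces to the identity already checked there. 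Hence $T=0$ and $L_{j}\sg{\lambda}{z}=L_{\bDo{\lambda}{a+1}{z}}\sg{\lambda}{z}$, which is~(ii).

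\textbf{Part~(i): identifying $T$.}
Now $\gamma=\lambda-\epsilon_{j}\in\pkl$, so $\lambda_{j}>\lambda_{j+1}$. One first checks $\dkl\setminus(j,j')=\De{k}{\gamma}$, that $\bDo{\gamma}{i}{z}=\bDo{\lambda}{i}{z}$ for $i\le a$, and that $\bdo{\gamma}{j}=j'+1$, whence $\bDo{\gamma}{a+1}{z}=\bDo{\lambda}{a+1}{z}+1$; the only situation in which $\bott_\gamma\neq\bott_\lambda$ --- equivalently, in which $\bDo{\gamma}{a+1}{z}$ is undefined --- is $j=\bott_\lambda$ and $j'=\ell$. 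Writing $M_\gamma:=L\big(\De{k}{\gamma}\big)\setminus\{\bdo{\gamma}{x}\mid x\in[z,\bott_\gamma]\}$ for the defining multiset of $\sg{\gamma}{z}$, a short bookkeeping using $\bdo{\lambda}{j}=j'$ gives $M=M_\gamma\sqcup\{j'+1\}$ in the generic case and $M=M_\gamma$ in the exceptional case. In the generic case, Lemma~\mref{lem:relk}~(\mref{it:relk4}) with $m=j'+1$, together with Lemma~\mref{lem:LK}, yields $\sg{\gamma}{z}=T+L_{j'+1}\sg{\gamma}{z}$, i.e. $T=\big(1-L_{\bDo{\gamma}{a+1}{z}}\big)\sg{\gamma}{z}$; in the exceptional case $T=\sg{\gamma}{z}=(1-0)\sg{\gamma}{z}$. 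Substituting into the common reduction gives
\begin{equation*}
L_{j}\sg{\lambda}{z}=\big(1-L_{\bDo{\gamma}{a+1}{z}}\big)\sg{\gamma}{z}+L_{\bDo{\lambda}{a+1}{z}}\sg{\lambda}{z},
\end{equation*}
which is~(i).

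\textbf{Expected main obstacle.}
The principal difficulty lies in the bookkeeping of Part~(i): tracking precisely how $\dkl$, the maps $\bdo{\lambda}{\cdot}$, the bottom row, and --- most delicately --- the defining multiset of Definition~\mref{def:semiKk} change when a single box is deleted from row $j$, and isolating the degenerate case in which that row empties (so $\De{k}{\gamma}$ has strictly smaller bottom and the $\bDo{\gamma}{a+1}{z}$-convention is triggered). For Part~(ii) the obstacle is instead verifying that the Mirror Lemma hypotheses survive the replacement of $\dkl$ by $\dkl\setminus(j,j')$; the two pressure points are the wall in rows $j,j+1$ and the persistence of the ceiling in column $j+1$, both of which are secured only by combining $\lambda_{j}=\lambda_{j+1}$ with $\lambda_{z}>\lambda_{z+1}$.
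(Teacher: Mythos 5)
Your proposal is correct and follows essentially the same route as the paper: both split off the removable root $\beta=(\bDo{\lambda}{a}{z},\bDo{\lambda}{a+1}{z})$ via Lemma~\mref{lem:relk}~\mref{it:relk1}, identify the leftover Katalan function with $\big(1-L_{\bDo{\gamma}{a+1}{z}}\big)\sg{\gamma}{z}$ through the multiset relation $M=M_\gamma\sqcup\{\bDo{\lambda}{a+1}{z}+1\}$ (with the same degenerate case $\bDo{\lambda}{a+1}{z}=\ell$ handled separately), and kill it with the Mirror Lemma applied to $\dkl\setminus\beta$ in part~(ii). Your use of Lemma~\mref{lem:relk}~\mref{it:relk4} instead of~\mref{it:relk3} in part~(i) is only an algebraic rearrangement of the same identity.
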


\begin{proof}
Denote
\begin{equation}
\beta:= \big(\bDo{\lambda}{a}{z},\bDo{\lambda}{a+1}{z}\big).
\mlabel{eq:beta}
\end{equation}
Then $\beta\in\dkl$ and $\beta$ is removable to $\dkl$ by Remark~\mref{re:rootfact}~(1).
Hence,
\begin{align}
L_{\bDo{\lambda}{a}{z}}\sg{\lambda}{z} =&\ L_{\bDo{\lambda}{a}{z}}K\Big(\dkl;L\big(\dkl\big)\setminus \{ \bdo{\lambda}{x} \mid x\in[z,\bott_\lambda] \};\lambda\Big)\hspace{1cm}
(\text{by Definition~\ref{def:semiKk}}) \notag\\
=&\ K\Big(\dkl;L\big(\dkl\big)\setminus \{ \bdo{\lambda}{x} \mid x\in[z,\bott_\lambda] \};\lambda-\epsilon_{\bDo{\lambda}{a}{z}}\Big)\hspace{1cm} (\text{by Lemma~\ref{lem:LK}})\notag\\
=&\ K\Big(\dkl\setminus \beta;L\big(\dkl\big)\setminus \{ \bdo{\lambda}{x} \mid x\in[z,\bott_\lambda] \};\lambda-\epsilon_{\bDo{\lambda}{a}{z}}\Big)\notag\\
&\ + K\Big(\dkl;L\big(\dkl\big)\setminus \{ \bdo{\lambda}{x} \mid x\in[z,\bott_\lambda] \};\lambda-\epsilon_{\bDo{\lambda}{a}{z}}+\varepsilon_{\beta}\Big)
\hspace{1cm} (\text{by Lemma~\ref{lem:relk}~(\ref{it:relk1})})\notag\\
=&\ K\Big(\dkl\setminus \beta;L\big(\dkl\big)\setminus \{ \bdo{\lambda}{x} \mid x\in[z,\bott_\lambda] \};\gamma\Big)\notag\\
&\ + K\Big(\dkl;L\big(\dkl\big)\setminus \{ \bdo{\lambda}{x} \mid x\in[z,\bott_\lambda] \};\lambda-\epsilon_{\bDo{\lambda}{a}{z}}+\varepsilon_{\beta}\Big)\notag\\
& \hspace{4cm} (\text{by $\gamma := \lambda-\epsilon_{\bDo{\lambda}{a}{z}}$ for the first summand})\notag\\
=&\ K\Big(\dkl\setminus \beta;L\big(\dkl\big)\setminus \{ \bdo{\lambda}{x} \mid x\in[z,\bott_\lambda] \};\gamma\Big)\notag\\
&\ + K\Big(\dkl;L\big(\dkl\big)\setminus \{ \bdo{\lambda}{x} \mid x\in[z,\bott_\lambda] \};\lambda-\epsilon_{\bDo{\lambda}{a+1}{z}}\Big)\notag\\
& \hspace{2cm} \big(\text{by~(\ref{eq:beta}) and so $\varepsilon_{\beta} = \epsilon_{\bDo{\lambda}{a}{z}} - \epsilon_{\bDo{\lambda}{a+1}{z}} $ for the second summand}\big)\notag\\
=&\ K\Big(\dkl\setminus \beta;L\big(\dkl\big)\setminus \{ \bdo{\lambda}{x} \mid x\in[z,\bott_\lambda] \};\gamma\Big) + L_{\bDo{\lambda}{a+1}{z}} \sg{\lambda}{z}\label{eq:smalldini}\\
& \hspace{3.5cm} (\text{by Lemma~\ref{lem:LK} and Definition~\ref{def:semiKk} for the second summand}).\notag
\end{align}

\mref{it:smalld1} Suppose $\gamma = \lambda-\epsilon_{\bDo{\lambda}{a}{z}}\in\pkl$. We divide the proof into the following two cases.

{\bf Case~1.} $\bDo{\lambda}{a+1}{z}+1\leq\ell$. In this case, we have
\begin{equation}
\dkl \setminus \beta \overset{(\ref{eq:beta})}{=} \De{k}{\lambda-\epsilon_{\bDo{\lambda}{a}{z}}} = \dkga,
\mlabel{eq:smalldDel}
\end{equation}
\begin{equation}
\bDo{\lambda}{a+1}{z}+1 = \bDo{\gamma}{a+1}{z},
\mlabel{eq:smalldC12}
\end{equation}
\begin{equation}
\Big(L\big(\dkl\big)\setminus \{ \bdo{\lambda}{x} \mid x\in[z,\bott_\lambda] \}\Big)\setminus\big(\bDo{\lambda}{a+1}{z}+1\big)
\xlongequal{(\ref{eq:smalldDel}),\eqref{eq:smalldC12}}
L\big(\dkga\big)\setminus \{ \bdo{\gamma}{x} \mid x\in[z,\bott_\gamma] \},
\label{eq:smalldC11}
\end{equation}
whence
\begin{align*}
& K\Big(\dkl\setminus \beta;L\big(\dkl\big)\setminus \{ \bdo{\lambda}{x} \mid x\in[z,\bott_\lambda] \};\gamma\Big)\\
=&\ K\Big(\De{k}{\gamma};L\big(\dkl\big)\setminus \{ \bdo{\lambda}{x} \mid x\in[z,\bott_\lambda] \};\gamma\Big)\hspace{1cm} (\text{by~(\ref{eq:smalldDel})})\\
=&\ K\Big(\De{k}{\gamma};\Big(L\big(\dkl\big)\setminus \{ \bdo{\lambda}{x} \mid x\in[z,\bott_\lambda] \}\Big)\setminus\big(\bDo{\lambda}{a+1}{z}+1\big);\gamma\Big)\\
&\ - K\Big(\De{k}{\gamma};\Big(L\big(\dkl\big)\setminus \{ \bdo{\lambda}{x} \mid x\in[z,\bott_\lambda] \}\Big)\setminus\big(\bDo{\lambda}{a+1}{z}+1\big);
\gamma-\epsilon_{\bDo{\lambda}{a+1}{z}+1}\Big)\\
& \hspace{8.5cm} (\text{by Lemma~\ref{lem:relk}~(\ref{it:relk3})})\\
=&\ K\Big(\De{k}{\gamma};L\big(\dkga\big)\setminus \{ \bdo{\gamma}{x} \mid x\in[z,\bott_\gamma] \};\gamma\Big)\\
&\ - K\Big(\De{k}{\gamma};L\big(\dkga\big)\setminus \{ \bdo{\gamma}{x} \mid x\in[z,\bott_\gamma] \};
\gamma-\epsilon_{\bDo{\lambda}{a+1}{z}+1}\Big)\hspace{1cm}
(\text{by~(\ref{eq:smalldC11})})\\
=&\ K\Big(\De{k}{\gamma};L\big(\dkga\big)\setminus \{ \bdo{\gamma}{x} \mid x\in[z,\bott_\gamma] \};\gamma\Big)\\
&\ - K\Big(\De{k}{\gamma};L\big(\dkga\big)\setminus \{ \bdo{\gamma}{x} \mid x\in[z,\bott_\gamma] \};
\gamma-\epsilon_{\bDo{\gamma}{a+1}{z}}\Big)\\
& \hspace{3cm}
(\text{by~(\ref{eq:smalldC12}) for $\epsilon_{\bDo{\lambda}{a+1}{z}+1}$ in the second summand})\\
=&\ K\Big(\De{k}{\gamma};L\big(\dkga\big)\setminus \{ \bdo{\gamma}{x} \mid x\in[z,\bott_\gamma] \};\gamma\Big)\\
&\ -L_{\bDo{\gamma}{a+1}{z}} K\Big(\De{k}{\gamma};L\big(\dkga\big)\setminus \{ \bdo{\gamma}{x} \mid x\in[z,\bott_\gamma] \};
\gamma\Big)\\
& \hspace{4cm} (\text{by Lemma~\ref{lem:LK} for the second summand})\\
=&\ \big(1-L_{\bDo{\gamma}{a+1}{z}}\big)\sg{\gamma}{z} \hspace{1cm} (\text{by Definition~\ref{def:semiKk}}).
\end{align*}
Substituting the above equation into~(\mref{eq:smalldini}) gives the required result.

{\bf Case~2.} $\bDo{\lambda}{a+1}{z}+1 > \ell$. Then $\bott_\lambda = \bDo{\lambda}{a}{z}$ and row $\bott_\lambda$ has exactly one root $\beta =\big(\bDo{\lambda}{a}{z}, \bDo{\lambda}{a+1}{z}\big)$ in the root ideal $\dkl$. Thus
$$\bDo{\lambda}{a+1}{z} = \bdo{\lambda}{\bott_\lambda} = \ell,$$
and so row $\bDo{\gamma}{a}{z} = \bDo{\lambda}{a}{z}$ has no root in root ideal $\Delta^k(\gamma)$. This implies that
$$\bDo{\gamma}{a}{z}>\bott_\gamma, \quad \bDo{\gamma}{a+1}{z}\,\text{ is undefined}, \quad   L_{\bDo{\gamma}{a+1}{z}}=0.$$
Hence
\begin{equation}
\begin{aligned}
\dkl \setminus \beta =&\ \De{k}{\lambda-\epsilon_{\bDo{\lambda}{a}{z}}} = \dkga,\\
L\big(\dkl\big)\setminus \{ \bdo{\lambda}{x} \mid x\in[z,\bott_\lambda] \}
=&\ L\big(\dkga\big)\setminus \{ \bdo{\gamma}{x} \mid x\in[z,\bott_\gamma] \},
\end{aligned}
\mlabel{eq:smalldC2}
\end{equation}
and further
\begin{align*}
&\ K\Big(\dkl\setminus \beta;L\big(\dkl\big)\setminus \{ \bdo{\lambda}{x} \mid x\in[z,\bott_\lambda] \};\gamma\Big)\\
=&\ K\Big(\dkga;L\big(\dkga\big)\setminus \{ \bdo{\gamma}{x} \mid x\in[z,\bott_\gamma] \};\gamma\Big)\hspace{1cm}(\text{by~(\ref{eq:smalldC2})})\\
=&\ \sg{\gamma}{z} \hspace{4cm} (\text{by Definition~\ref{def:semiKk}})\\
=&\ (1-L_{\bDo{\gamma}{a+1}{z}}) \sg{\gamma}{z}   \hspace{2cm} (\text{by $L_{\bDo{\gamma}{a+1}{z}} = 0$}).
\end{align*}
Substituting the above equation into~(\mref{eq:smalldini}) yields the required result.

\mref{it:smalld2} Suppose
\[
\gamma = \lambda-\epsilon_{\bDo{\lambda}{a}{z}}\notin\pkl,\, \text{ that is},\ \, \bDo{\lambda}{a}{z}<\ell\,\text{ and }\, \lambda_{\bDo{\lambda}{a}{z}} = \lambda_{\bDo{\lambda}{a}{z}+1}.
\]
Take the unique $c\in[0,a-1]$ such that~(\mref{eq:csmallest}) holds. We are going to apply Mirror Lemma~\ref{lem:mirr1} with
$$\Psi := \dkl\setminus\beta, \quad M:=L\big(\dkl\big)\setminus \{ \bdo{\lambda}{x} \mid x\in[z,\bott_\lambda] \}.$$
For this, let us examine the conditions in Mirror Lemma~\ref{lem:mirr1} one by one.
\begin{enumerate}
\item[(i)] From the inequality $\lambda_{\bDo{\lambda}{c}{z}} > \lambda_{\bDo{\lambda}{c}{z}+1}$ and Remark~\ref{re:rootfact}~(3), it follows that the root ideal $\dkl$ has a ceiling in columns
$$\bDo{\lambda}{c+1}{z},\quad \bDo{\lambda}{c+1}{z}+1.$$
Since~\eqref{eq:beta} and $\bDo{\lambda}{c+1}{z} \leq \bDo{\lambda}{a}{z}$ ensure that removing $\beta$ does not affect this ceiling, the root ideal $\dkl \setminus \beta$ also retains a ceiling in the same columns.

\item[(ii)] Since Remark~\ref{re:rootfact}~(b) and
    $$k>\lambda_{\bDo{\lambda}{x}{z}} = \lambda_{\bDo{\lambda}{x}{z}+1}\, \text{ for }\,x\in[c+1,a-1],$$
the root ideal $\dkl$ has a mirror in rows
$$x,\  x+1\,\text{ for each }\, x\in{\rm path}_{\dkl}\big(\bDo{\lambda}{c+1}{z}, \bDo{\lambda}{a-1}{z}\big).$$
As a result, the root ideal $\dkl\setminus \beta$ has a mirror in rows
$$x,\  x+1 \,\text{ for each }\, x\in{\rm path}_{\dkl\setminus \beta}\big(\bDo{\lambda}{c+1}{z}, \bDo{\lambda}{a-1}{z}\big),\,\text{ where }\, \bDo{\lambda}{a-1}{z} = {\rm up}_{\dkl\setminus\beta}(\bDo{\lambda}{a}{z}).$$

\item[(iii)] By $\lambda_{\bDo{\lambda}{a}{z}} = \lambda_{\bDo{\lambda}{a}{z}+1}$ and~(\mref{eq:beta}), there is a wall in rows $\bDo{\lambda}{a}{z},\bDo{\lambda}{a}{z}+1$ in the root ideal $\dkl\setminus\beta$.
\item[(iv)] $\gamma_x = \lambda_x = \lambda_{x+1} =  \gamma_{x+1}$ for all $x\in{\rm path}_{\dkl\setminus\beta}\big(\bDo{\lambda}{c+1}{z}, \bDo{\lambda}{a-1}{z}\big)$.
\item[(v)] $\gamma_{\bDo{\lambda}{a}{z}}+1 = \big(\lambda_{\bDo{\lambda}{a}{z}}-1\big)+1 = \lambda_{\bDo{\lambda}{a}{z}} = \lambda_{\bDo{\lambda}{a}{z}+1} =
\big( \lambda-\epsilon_{\bDo{\lambda}{a}{z}} \big)_{\bDo{\lambda}{a}{z}+1} =\gamma_{\bDo{\lambda}{a}{z}+1}$.
\item[(vi)]$m_M(x)+1 = m_M(x+1)$ for all $x\in{\rm path}_{\dkl\setminus\beta}\big(\bDo{\lambda}{c+2}{z}, \bDo{\lambda}{a}{z}\big)$.
\item[(vii)] $m_M\big(\bDo{\lambda}{c+1}{z}\big)+1 = m_M\big(\bDo{\lambda}{c+1}{z}+1\big)$.
\end{enumerate}
By Mirror Lemma~\mref{lem:mirr1},
$$ K\Big(\dkl\setminus \beta;L\big(\dkl\big)\setminus \{ \bdo{\lambda}{x} \mid x\in[z,\bott_\lambda] \};\gamma\Big) = 0, $$
and the result holds by~(\mref{eq:smalldini}).
This completes the proof.
\end{proof}

The following example illustrates Proposition~\mref{prop:smalld}.

\begin{exam}
Let
\[
k=7,\quad \ell=13,\quad \lambda = (7,6,5,5,4,4,4,3,3,3,2,2,1)\in{\rm P}_{13}^7.
\]
Then $\bott_\lambda = 8$. Taking $z=2\in[\bott_\lambda-1]$ and $a=2$, we have
\begin{align*}
& (1) \qquad \bDo{\lambda}{2}{z} = 7 \in [\bott_\lambda], \quad \lambda_2 = 6 >5 =  \lambda_3,\\
& (2) \qquad \gamma = \lambda - \epsilon_{\bDo{\lambda}{2}{z}} = (7,6,5,5,4,4,\textcolor{green}{3},3,3,3,2,2,1) \in {\rm P}_{13}^7,\\
& (3) \qquad \bDo{\gamma}{a+1}{z} = \bDo{\gamma}{3}{z} = 12, \quad \bDo{\lambda}{a+1}{z} = \bDo{\lambda}{3}{z}=11,
\end{align*}
and so
\begin{equation*}
\begin{aligned}
L_{\bDo{\lambda}{2}{z}}\sg{\lambda}{z} =
L_7 \,
\begin{tikzpicture}[scale=.3,line width=0.5pt,baseline=(a.base)]
\draw (0,7) rectangle (1,6);\node at(0.5,6.5){\tiny\( 7 \)};
\filldraw[red,draw=black] (1,7) rectangle (2,6);\node at(1.5,6.5){\tiny\( \bullet \)};
\filldraw[red,draw=black] (2,7) rectangle (3,6);\node at(2.5,6.5){\tiny\( \bullet \)};
\filldraw[red,draw=black] (3,7) rectangle (4,6);\node at(3.5,6.5){\tiny\( \bullet \)};
\filldraw[red,draw=black] (4,7) rectangle (5,6);\node at(4.5,6.5){\tiny\( \bullet \)};
\filldraw[red,draw=black] (5,7) rectangle (6,6);\node at(5.5,6.5){\tiny\( \bullet \)};
\filldraw[red,draw=black] (6,7) rectangle (7,6);\node at(6.5,6.5){\tiny\( \bullet \)};
\filldraw[red,draw=black] (7,7) rectangle (8,6);\node at(7.5,6.5){\tiny\( \bullet \)};
\filldraw[red,draw=black] (8,7) rectangle (9,6);\node at(8.5,6.5){\tiny\( \bullet \)};
\filldraw[red,draw=black] (9,7) rectangle (10,6);\node at(9.5,6.5){\tiny\( \bullet \)};
\filldraw[red,draw=black] (10,7) rectangle (11,6);\node at(10.5,6.5){\tiny\( \bullet \)};
\filldraw[red,draw=black] (11,7) rectangle (12,6);\node at(11.5,6.5){\tiny\( \bullet \)};
\filldraw[red,draw=black] (12,7) rectangle (13,6);\node at(12.5,6.5){\tiny\( \bullet \)};
\draw (0,6) rectangle (1,5);
\draw (1,6) rectangle (2,5);\node at(1.5,5.5){\tiny\( 6 \)};
\draw (2,6) rectangle (3,5);
\filldraw[red,draw=black] (3,6) rectangle (4,5);
\filldraw[red,draw=black] (4,6) rectangle (5,5);\node at(4.5,5.5){\tiny\( \bullet \)};
\filldraw[red,draw=black] (5,6) rectangle (6,5);\node at(5.5,5.5){\tiny\( \bullet \)};
\filldraw[red,draw=black] (6,6) rectangle (7,5);\node at(6.5,5.5){\tiny\( \bullet \)};
\filldraw[red,draw=black] (7,6) rectangle (8,5);\node at(7.5,5.5){\tiny\( \bullet \)};
\filldraw[red,draw=black] (8,6) rectangle (9,5);\node at(8.5,5.5){\tiny\( \bullet \)};
\filldraw[red,draw=black] (9,6) rectangle (10,5);\node at(9.5,5.5){\tiny\( \bullet \)};
\filldraw[red,draw=black] (10,6) rectangle (11,5);\node at(10.5,5.5){\tiny\( \bullet \)};
\filldraw[red,draw=black] (11,6) rectangle (12,5);\node at(11.5,5.5){\tiny\( \bullet \)};
\filldraw[red,draw=black] (12,6) rectangle (13,5);\node at(12.5,5.5){\tiny\( \bullet \)};
\draw (0,5) rectangle (1,4);
\draw (1,5) rectangle (2,4);
\draw (2,5) rectangle (3,4);\node at(2.5,4.5){\tiny\( 5 \)};
\draw (3,5) rectangle (4,4);
\draw (4,5) rectangle (5,4);
\filldraw[red,draw=black] (5,5) rectangle (6,4);
\filldraw[red,draw=black] (6,5) rectangle (7,4);\node at(6.5,4.5){\tiny\( \bullet \)};
\filldraw[red,draw=black] (7,5) rectangle (8,4);\node at(7.5,4.5){\tiny\( \bullet \)};
\filldraw[red,draw=black] (8,5) rectangle (9,4);\node at(8.5,4.5){\tiny\( \bullet \)};
\filldraw[red,draw=black] (9,5) rectangle (10,4);\node at(9.5,4.5){\tiny\( \bullet \)};
\filldraw[red,draw=black] (10,5) rectangle (11,4);\node at(10.5,4.5){\tiny\( \bullet \)};
\filldraw[red,draw=black] (11,5) rectangle (12,4);\node at(11.5,4.5){\tiny\( \bullet \)};
\filldraw[red,draw=black] (12,5) rectangle (13,4);\node at(12.5,4.5){\tiny\( \bullet \)};
\draw (0,4) rectangle (1,3);
\draw (1,4) rectangle (2,3);
\draw (2,4) rectangle (3,3);
\draw (3,4) rectangle (4,3);\node at(3.5,3.5){\tiny\( 5 \)};
\draw (4,4) rectangle (5,3);
\draw (5,4) rectangle (6,3);
\filldraw[red,draw=black] (6,4) rectangle (7,3);
\filldraw[red,draw=black] (7,4) rectangle (8,3);\node at(7.5,3.5){\tiny\( \bullet \)};
\filldraw[red,draw=black] (8,4) rectangle (9,3);\node at(8.5,3.5){\tiny\( \bullet \)};
\filldraw[red,draw=black] (9,4) rectangle (10,3);\node at(9.5,3.5){\tiny\( \bullet \)};
\filldraw[red,draw=black] (10,4) rectangle (11,3);\node at(10.5,3.5){\tiny\( \bullet \)};
\filldraw[red,draw=black] (11,4) rectangle (12,3);\node at(11.5,3.5){\tiny\( \bullet \)};
\filldraw[red,draw=black] (12,4) rectangle (13,3);\node at(12.5,3.5){\tiny\( \bullet \)};
\draw (0,3) rectangle (1,2);
\draw (1,3) rectangle (2,2);
\draw (2,3) rectangle (3,2);
\draw (3,3) rectangle (4,2);
\draw (4,3) rectangle (5,2);\node at(4.5,2.5){\tiny\( 4 \)};
\draw (5,3) rectangle (6,2);
\draw (6,3) rectangle (7,2);
\draw (7,3) rectangle (8,2);
\filldraw[red,draw=black] (8,3) rectangle (9,2);
\filldraw[red,draw=black] (9,3) rectangle (10,2);\node at(9.5,2.5){\tiny\( \bullet \)};
\filldraw[red,draw=black] (10,3) rectangle (11,2);\node at(10.5,2.5){\tiny\( \bullet \)};
\filldraw[red,draw=black] (11,3) rectangle (12,2);\node at(11.5,2.5){\tiny\( \bullet \)};
\filldraw[red,draw=black] (12,3) rectangle (13,2);\node at(12.5,2.5){\tiny\( \bullet \)};
\draw (0,2) rectangle (1,1);
\draw (1,2) rectangle (2,1);
\draw (2,2) rectangle (3,1);
\draw (3,2) rectangle (4,1);
\draw (4,2) rectangle (5,1);
\draw (5,2) rectangle (6,1);\node at(5.5,1.5){\tiny\( 4 \)};
\draw (6,2) rectangle (7,1);
\draw (7,2) rectangle (8,1);
\draw (8,2) rectangle (9,1);
\filldraw[red,draw=black] (9,2) rectangle (10,1);4
\filldraw[red,draw=black] (10,2) rectangle (11,1);\node at(10.5,1.5){\tiny\( \bullet \)};
\filldraw[red,draw=black] (11,2) rectangle (12,1);\node at(11.5,1.5){\tiny\( \bullet \)};
\filldraw[red,draw=black] (12,2) rectangle (13,1);\node at(12.5,1.5){\tiny\( \bullet \)};
\draw (0,1) rectangle (1,0);
\draw (1,1) rectangle (2,0);
\draw (2,1) rectangle (3,0);
\draw (3,1) rectangle (4,0);
\draw (4,1) rectangle (5,0);
\draw (5,1) rectangle (6,0);
\draw (6,1) rectangle (7,0);\node at(6.5,0.5){\tiny\( 4 \)};
\draw (7,1) rectangle (8,0);
\draw (8,1) rectangle (9,0);
\draw (9,1) rectangle (10,0);
\filldraw[red,draw=black] (10,1) rectangle (11,0);
\filldraw[red,draw=black] (11,1) rectangle (12,0);\node at(11.5,0.5){\tiny\( \bullet \)};
\filldraw[red,draw=black] (12,1) rectangle (13,0);\node at(12.5,0.5){\tiny\( \bullet \)};
\draw (0,0) rectangle (1,-1);
\draw (1,0) rectangle (2,-1);
\draw (2,0) rectangle (3,-1);
\draw (3,0) rectangle (4,-1);
\draw (4,0) rectangle (5,-1);
\draw (5,0) rectangle (6,-1);
\draw (6,0) rectangle (7,-1);
\draw (7,0) rectangle (8,-1);\node at(7.5,-0.5){\tiny\( 3 \)};
\draw (8,0) rectangle (9,-1);
\draw (9,0) rectangle (10,-1);
\draw (10,0) rectangle (11,-1);
\draw (11,0) rectangle (12,-1);
\filldraw[red,draw=black] (12,0) rectangle (13,-1);
\draw (0,-1) rectangle (1,-2);
\draw (1,-1) rectangle (2,-2);
\draw (2,-1) rectangle (3,-2);
\draw (3,-1) rectangle (4,-2);
\draw (4,-1) rectangle (5,-2);
\draw (5,-1) rectangle (6,-2);
\draw (6,-1) rectangle (7,-2);
\draw (7,-1) rectangle (8,-2);
\draw (8,-1) rectangle (9,-2);\node at(8.5,-1.5){\tiny\( 3 \)};
\draw (9,-1) rectangle (10,-2);
\draw (10,-1) rectangle (11,-2);
\draw (11,-1) rectangle (12,-2);
\draw (12,-1) rectangle (13,-2);
\draw (0,-2) rectangle (1,-3);
\draw (1,-2) rectangle (2,-3);
\draw (2,-2) rectangle (3,-3);
\draw (3,-2) rectangle (4,-3);
\draw (4,-2) rectangle (5,-3);
\draw (5,-2) rectangle (6,-3);
\draw (6,-2) rectangle (7,-3);
\draw (7,-2) rectangle (8,-3);
\draw (8,-2) rectangle (9,-3);
\draw (9,-2) rectangle (10,-3);\node at(9.5,-2.5){\tiny\( 3 \)};
\draw (10,-2) rectangle (11,-3);
\draw (11,-2) rectangle (12,-3);
\draw (12,-2) rectangle (13,-3);
\draw (0,-3) rectangle (1,-4);
\draw (1,-3) rectangle (2,-4);
\draw (2,-3) rectangle (3,-4);
\draw (3,-3) rectangle (4,-4);
\draw (4,-3) rectangle (5,-4);
\draw (5,-3) rectangle (6,-4);
\draw (6,-3) rectangle (7,-4);
\draw (7,-3) rectangle (8,-4);
\draw (8,-3) rectangle (9,-4);
\draw (9,-3) rectangle (10,-4);
\draw (10,-3) rectangle (11,-4);\node at(10.5,-3.5){\tiny\( 2 \)};
\draw (11,-3) rectangle (12,-4);
\draw (12,-3) rectangle (13,-4);
\draw (0,-4) rectangle (1,-5);
\draw (1,-4) rectangle (2,-5);
\draw (2,-4) rectangle (3,-5);
\draw (3,-4) rectangle (4,-5);
\draw (4,-4) rectangle (5,-5);
\draw (5,-4) rectangle (6,-5);
\draw (6,-4) rectangle (7,-5);
\draw (7,-4) rectangle (8,-5);
\draw (8,-4) rectangle (9,-5);
\draw (9,-4) rectangle (10,-5);
\draw (10,-4) rectangle (11,-5);
\draw (11,-4) rectangle (12,-5);\node at(11.5,-4.5){\tiny\( 2 \)};
\draw (12,-4) rectangle (13,-5);
\draw (0,-5) rectangle (1,-6);
\draw (1,-5) rectangle (2,-6);
\draw (2,-5) rectangle (3,-6);
\draw (3,-5) rectangle (4,-6);
\draw (4,-5) rectangle (5,-6);
\draw (5,-5) rectangle (6,-6);
\draw (6,-5) rectangle (7,-6);
\draw (7,-5) rectangle (8,-6);
\draw (8,-5) rectangle (9,-6);
\draw (9,-5) rectangle (10,-6);
\draw (10,-5) rectangle (11,-6);
\draw (11,-5) rectangle (12,-6);
\draw (12,-5) rectangle (13,-6);\node at(12.5,-5.5){\tiny\( 1 \)};
\draw[purple,line width=0.8pt] (4,3.5)--(6,3.5);
\draw[purple,line width=0.8pt] (3.5,4)--(3.5,5);
\draw[purple,line width=0.8pt] (6.5,3)--(6.5,1);
\draw[purple,line width=0.8pt] (4,3.5)--(6,3.5);
\draw[purple,line width=0.8pt] (7,0.5)--(10,0.5);
%
\draw[black,line width=0.8pt,->] (13,5.5)--(13.8,5.5);
\node at (14.1,5.5){ \tiny{\textbf{$2$}} };
\draw[black,line width=0.8pt,->] (13,0.5)--(13.8,0.5);
\node at (14.1,0.5){ \tiny{\textbf{$7$}} };
\draw[black,line width=0.8pt,->] (10.5,7)--(10.5,7.8);
\node at (10.5,8.2){ \tiny{\textbf{$11$}} };
\end{tikzpicture}
=&\
(1-L_{12})\,
\begin{tikzpicture}[scale=.3,line width=0.5pt,baseline=(a.base)]
\draw (0,7) rectangle (1,6);\node at(0.5,6.5){\tiny\( 7 \)};
\filldraw[red,draw=black] (1,7) rectangle (2,6);\node at(1.5,6.5){\tiny\( \bullet \)};
\filldraw[red,draw=black] (2,7) rectangle (3,6);\node at(2.5,6.5){\tiny\( \bullet \)};
\filldraw[red,draw=black] (3,7) rectangle (4,6);\node at(3.5,6.5){\tiny\( \bullet \)};
\filldraw[red,draw=black] (4,7) rectangle (5,6);\node at(4.5,6.5){\tiny\( \bullet \)};
\filldraw[red,draw=black] (5,7) rectangle (6,6);\node at(5.5,6.5){\tiny\( \bullet \)};
\filldraw[red,draw=black] (6,7) rectangle (7,6);\node at(6.5,6.5){\tiny\( \bullet \)};
\filldraw[red,draw=black] (7,7) rectangle (8,6);\node at(7.5,6.5){\tiny\( \bullet \)};
\filldraw[red,draw=black] (8,7) rectangle (9,6);\node at(8.5,6.5){\tiny\( \bullet \)};
\filldraw[red,draw=black] (9,7) rectangle (10,6);\node at(9.5,6.5){\tiny\( \bullet \)};
\filldraw[red,draw=black] (10,7) rectangle (11,6);\node at(10.5,6.5){\tiny\( \bullet \)};
\filldraw[red,draw=black] (11,7) rectangle (12,6);\node at(11.5,6.5){\tiny\( \bullet \)};
\filldraw[red,draw=black] (12,7) rectangle (13,6);\node at(12.5,6.5){\tiny\( \bullet \)};
\draw (0,6) rectangle (1,5);
\draw (1,6) rectangle (2,5);\node at(1.5,5.5){\tiny\( 6 \)};
\draw (2,6) rectangle (3,5);
\filldraw[red,draw=black] (3,6) rectangle (4,5);
\filldraw[red,draw=black] (4,6) rectangle (5,5);\node at(4.5,5.5){\tiny\( \bullet \)};
\filldraw[red,draw=black] (5,6) rectangle (6,5);\node at(5.5,5.5){\tiny\( \bullet \)};
\filldraw[red,draw=black] (6,6) rectangle (7,5);\node at(6.5,5.5){\tiny\( \bullet \)};
\filldraw[red,draw=black] (7,6) rectangle (8,5);\node at(7.5,5.5){\tiny\( \bullet \)};
\filldraw[red,draw=black] (8,6) rectangle (9,5);\node at(8.5,5.5){\tiny\( \bullet \)};
\filldraw[red,draw=black] (9,6) rectangle (10,5);\node at(9.5,5.5){\tiny\( \bullet \)};
\filldraw[red,draw=black] (10,6) rectangle (11,5);\node at(10.5,5.5){\tiny\( \bullet \)};
\filldraw[red,draw=black] (11,6) rectangle (12,5);\node at(11.5,5.5){\tiny\( \bullet \)};
\filldraw[red,draw=black] (12,6) rectangle (13,5);\node at(12.5,5.5){\tiny\( \bullet \)};
\draw (0,5) rectangle (1,4);
\draw (1,5) rectangle (2,4);
\draw (2,5) rectangle (3,4);\node at(2.5,4.5){\tiny\( 5 \)};
\draw (3,5) rectangle (4,4);
\draw (4,5) rectangle (5,4);
\filldraw[red,draw=black] (5,5) rectangle (6,4);
\filldraw[red,draw=black] (6,5) rectangle (7,4);\node at(6.5,4.5){\tiny\( \bullet \)};
\filldraw[red,draw=black] (7,5) rectangle (8,4);\node at(7.5,4.5){\tiny\( \bullet \)};
\filldraw[red,draw=black] (8,5) rectangle (9,4);\node at(8.5,4.5){\tiny\( \bullet \)};
\filldraw[red,draw=black] (9,5) rectangle (10,4);\node at(9.5,4.5){\tiny\( \bullet \)};
\filldraw[red,draw=black] (10,5) rectangle (11,4);\node at(10.5,4.5){\tiny\( \bullet \)};
\filldraw[red,draw=black] (11,5) rectangle (12,4);\node at(11.5,4.5){\tiny\( \bullet \)};
\filldraw[red,draw=black] (12,5) rectangle (13,4);\node at(12.5,4.5){\tiny\( \bullet \)};
\draw (0,4) rectangle (1,3);
\draw (1,4) rectangle (2,3);
\draw (2,4) rectangle (3,3);
\draw (3,4) rectangle (4,3);\node at(3.5,3.5){\tiny\( 5 \)};
\draw (4,4) rectangle (5,3);
\draw (5,4) rectangle (6,3);
\filldraw[red,draw=black] (6,4) rectangle (7,3);
\filldraw[red,draw=black] (7,4) rectangle (8,3);\node at(7.5,3.5){\tiny\( \bullet \)};
\filldraw[red,draw=black] (8,4) rectangle (9,3);\node at(8.5,3.5){\tiny\( \bullet \)};
\filldraw[red,draw=black] (9,4) rectangle (10,3);\node at(9.5,3.5){\tiny\( \bullet \)};
\filldraw[red,draw=black] (10,4) rectangle (11,3);\node at(10.5,3.5){\tiny\( \bullet \)};
\filldraw[red,draw=black] (11,4) rectangle (12,3);\node at(11.5,3.5){\tiny\( \bullet \)};
\filldraw[red,draw=black] (12,4) rectangle (13,3);\node at(12.5,3.5){\tiny\( \bullet \)};
\draw (0,3) rectangle (1,2);
\draw (1,3) rectangle (2,2);
\draw (2,3) rectangle (3,2);
\draw (3,3) rectangle (4,2);
\draw (4,3) rectangle (5,2);\node at(4.5,2.5){\tiny\( 4 \)};
\draw (5,3) rectangle (6,2);
\draw (6,3) rectangle (7,2);
\draw (7,3) rectangle (8,2);
\filldraw[red,draw=black] (8,3) rectangle (9,2);
\filldraw[red,draw=black] (9,3) rectangle (10,2);\node at(9.5,2.5){\tiny\( \bullet \)};
\filldraw[red,draw=black] (10,3) rectangle (11,2);\node at(10.5,2.5){\tiny\( \bullet \)};
\filldraw[red,draw=black] (11,3) rectangle (12,2);\node at(11.5,2.5){\tiny\( \bullet \)};
\filldraw[red,draw=black] (12,3) rectangle (13,2);\node at(12.5,2.5){\tiny\( \bullet \)};
\draw (0,2) rectangle (1,1);
\draw (1,2) rectangle (2,1);
\draw (2,2) rectangle (3,1);
\draw (3,2) rectangle (4,1);
\draw (4,2) rectangle (5,1);
\draw (5,2) rectangle (6,1);\node at(5.5,1.5){\tiny\( 4 \)};
\draw (6,2) rectangle (7,1);
\draw (7,2) rectangle (8,1);
\draw (8,2) rectangle (9,1);
\filldraw[red,draw=black] (9,2) rectangle (10,1);4
\filldraw[red,draw=black] (10,2) rectangle (11,1);\node at(10.5,1.5){\tiny\( \bullet \)};
\filldraw[red,draw=black] (11,2) rectangle (12,1);\node at(11.5,1.5){\tiny\( \bullet \)};
\filldraw[red,draw=black] (12,2) rectangle (13,1);\node at(12.5,1.5){\tiny\( \bullet \)};
\draw (0,1) rectangle (1,0);
\draw (1,1) rectangle (2,0);
\draw (2,1) rectangle (3,0);
\draw (3,1) rectangle (4,0);
\draw (4,1) rectangle (5,0);
\draw (5,1) rectangle (6,0);
\draw (6,1) rectangle (7,0);\node at(6.5,0.5){\tiny\( \textcolor{green}{3} \)};
\draw (7,1) rectangle (8,0);
\draw (8,1) rectangle (9,0);
\draw (9,1) rectangle (10,0);
\draw (10,1) rectangle (11,0);
\filldraw[red,draw=black] (11,1) rectangle (12,0);
\filldraw[red,draw=black] (12,1) rectangle (13,0);\node at(12.5,0.5){\tiny\( \bullet \)};
\draw (0,0) rectangle (1,-1);
\draw (1,0) rectangle (2,-1);
\draw (2,0) rectangle (3,-1);
\draw (3,0) rectangle (4,-1);
\draw (4,0) rectangle (5,-1);
\draw (5,0) rectangle (6,-1);
\draw (6,0) rectangle (7,-1);
\draw (7,0) rectangle (8,-1);\node at(7.5,-0.5){\tiny\( 3 \)};
\draw (8,0) rectangle (9,-1);
\draw (9,0) rectangle (10,-1);
\draw (10,0) rectangle (11,-1);
\draw (11,0) rectangle (12,-1);
\filldraw[red,draw=black] (12,0) rectangle (13,-1);
\draw (0,-1) rectangle (1,-2);
\draw (1,-1) rectangle (2,-2);
\draw (2,-1) rectangle (3,-2);
\draw (3,-1) rectangle (4,-2);
\draw (4,-1) rectangle (5,-2);
\draw (5,-1) rectangle (6,-2);
\draw (6,-1) rectangle (7,-2);
\draw (7,-1) rectangle (8,-2);
\draw (8,-1) rectangle (9,-2);\node at(8.5,-1.5){\tiny\( 3 \)};
\draw (9,-1) rectangle (10,-2);
\draw (10,-1) rectangle (11,-2);
\draw (11,-1) rectangle (12,-2);
\draw (12,-1) rectangle (13,-2);
\draw (0,-2) rectangle (1,-3);
\draw (1,-2) rectangle (2,-3);
\draw (2,-2) rectangle (3,-3);
\draw (3,-2) rectangle (4,-3);
\draw (4,-2) rectangle (5,-3);
\draw (5,-2) rectangle (6,-3);
\draw (6,-2) rectangle (7,-3);
\draw (7,-2) rectangle (8,-3);
\draw (8,-2) rectangle (9,-3);
\draw (9,-2) rectangle (10,-3);\node at(9.5,-2.5){\tiny\( 3 \)};
\draw (10,-2) rectangle (11,-3);
\draw (11,-2) rectangle (12,-3);
\draw (12,-2) rectangle (13,-3);
\draw (0,-3) rectangle (1,-4);
\draw (1,-3) rectangle (2,-4);
\draw (2,-3) rectangle (3,-4);
\draw (3,-3) rectangle (4,-4);
\draw (4,-3) rectangle (5,-4);
\draw (5,-3) rectangle (6,-4);
\draw (6,-3) rectangle (7,-4);
\draw (7,-3) rectangle (8,-4);
\draw (8,-3) rectangle (9,-4);
\draw (9,-3) rectangle (10,-4);
\draw (10,-3) rectangle (11,-4);\node at(10.5,-3.5){\tiny\( 2 \)};
\draw (11,-3) rectangle (12,-4);
\draw (12,-3) rectangle (13,-4);
\draw (0,-4) rectangle (1,-5);
\draw (1,-4) rectangle (2,-5);
\draw (2,-4) rectangle (3,-5);
\draw (3,-4) rectangle (4,-5);
\draw (4,-4) rectangle (5,-5);
\draw (5,-4) rectangle (6,-5);
\draw (6,-4) rectangle (7,-5);
\draw (7,-4) rectangle (8,-5);
\draw (8,-4) rectangle (9,-5);
\draw (9,-4) rectangle (10,-5);
\draw (10,-4) rectangle (11,-5);
\draw (11,-4) rectangle (12,-5);\node at(11.5,-4.5){\tiny\( 2 \)};
\draw (12,-4) rectangle (13,-5);
\draw (0,-5) rectangle (1,-6);
\draw (1,-5) rectangle (2,-6);
\draw (2,-5) rectangle (3,-6);
\draw (3,-5) rectangle (4,-6);
\draw (4,-5) rectangle (5,-6);
\draw (5,-5) rectangle (6,-6);
\draw (6,-5) rectangle (7,-6);
\draw (7,-5) rectangle (8,-6);
\draw (8,-5) rectangle (9,-6);
\draw (9,-5) rectangle (10,-6);
\draw (10,-5) rectangle (11,-6);
\draw (11,-5) rectangle (12,-6);
\draw (12,-5) rectangle (13,-6);\node at(12.5,-5.5){\tiny\( 1 \)};
\draw[purple,line width=0.8pt] (4,3.5)--(6,3.5);
\draw[purple,line width=0.8pt] (3.5,4)--(3.5,5);
\draw[purple,line width=0.8pt] (6.5,3)--(6.5,1);
\draw[purple,line width=0.8pt] (4,3.5)--(6,3.5);
\draw[purple,line width=0.8pt] (7,0.5)--(11,0.5);
%
\draw[black,line width=0.8pt,->] (13,5.5)--(13.8,5.5);
\node at (14.1,5.5){ \tiny{\textbf{$2$}} };
\draw[black,line width=0.8pt,->] (13,0.5)--(13.8,0.5);
\node at (14.1,0.5){ \tiny{\textbf{$7$}} };
\draw[black,line width=0.8pt,->] (11.5,7)--(11.5,7.8);
\node at (11.5,8.2){ \tiny{\textbf{$12$}} };
\end{tikzpicture}
\\
&\ +
L_{11} \,
\begin{tikzpicture}[scale=.3,line width=0.5pt,baseline=(a.base)]
\draw (0,7) rectangle (1,6);\node at(0.5,6.5){\tiny\( 7 \)};
\filldraw[red,draw=black] (1,7) rectangle (2,6);\node at(1.5,6.5){\tiny\( \bullet \)};
\filldraw[red,draw=black] (2,7) rectangle (3,6);\node at(2.5,6.5){\tiny\( \bullet \)};
\filldraw[red,draw=black] (3,7) rectangle (4,6);\node at(3.5,6.5){\tiny\( \bullet \)};
\filldraw[red,draw=black] (4,7) rectangle (5,6);\node at(4.5,6.5){\tiny\( \bullet \)};
\filldraw[red,draw=black] (5,7) rectangle (6,6);\node at(5.5,6.5){\tiny\( \bullet \)};
\filldraw[red,draw=black] (6,7) rectangle (7,6);\node at(6.5,6.5){\tiny\( \bullet \)};
\filldraw[red,draw=black] (7,7) rectangle (8,6);\node at(7.5,6.5){\tiny\( \bullet \)};
\filldraw[red,draw=black] (8,7) rectangle (9,6);\node at(8.5,6.5){\tiny\( \bullet \)};
\filldraw[red,draw=black] (9,7) rectangle (10,6);\node at(9.5,6.5){\tiny\( \bullet \)};
\filldraw[red,draw=black] (10,7) rectangle (11,6);\node at(10.5,6.5){\tiny\( \bullet \)};
\filldraw[red,draw=black] (11,7) rectangle (12,6);\node at(11.5,6.5){\tiny\( \bullet \)};
\filldraw[red,draw=black] (12,7) rectangle (13,6);\node at(12.5,6.5){\tiny\( \bullet \)};
\draw (0,6) rectangle (1,5);
\draw (1,6) rectangle (2,5);\node at(1.5,5.5){\tiny\( 6 \)};
\draw (2,6) rectangle (3,5);
\filldraw[red,draw=black] (3,6) rectangle (4,5);
\filldraw[red,draw=black] (4,6) rectangle (5,5);\node at(4.5,5.5){\tiny\( \bullet \)};
\filldraw[red,draw=black] (5,6) rectangle (6,5);\node at(5.5,5.5){\tiny\( \bullet \)};
\filldraw[red,draw=black] (6,6) rectangle (7,5);\node at(6.5,5.5){\tiny\( \bullet \)};
\filldraw[red,draw=black] (7,6) rectangle (8,5);\node at(7.5,5.5){\tiny\( \bullet \)};
\filldraw[red,draw=black] (8,6) rectangle (9,5);\node at(8.5,5.5){\tiny\( \bullet \)};
\filldraw[red,draw=black] (9,6) rectangle (10,5);\node at(9.5,5.5){\tiny\( \bullet \)};
\filldraw[red,draw=black] (10,6) rectangle (11,5);\node at(10.5,5.5){\tiny\( \bullet \)};
\filldraw[red,draw=black] (11,6) rectangle (12,5);\node at(11.5,5.5){\tiny\( \bullet \)};
\filldraw[red,draw=black] (12,6) rectangle (13,5);\node at(12.5,5.5){\tiny\( \bullet \)};
\draw (0,5) rectangle (1,4);
\draw (1,5) rectangle (2,4);
\draw (2,5) rectangle (3,4);\node at(2.5,4.5){\tiny\( 5 \)};
\draw (3,5) rectangle (4,4);
\draw (4,5) rectangle (5,4);
\filldraw[red,draw=black] (5,5) rectangle (6,4);
\filldraw[red,draw=black] (6,5) rectangle (7,4);\node at(6.5,4.5){\tiny\( \bullet \)};
\filldraw[red,draw=black] (7,5) rectangle (8,4);\node at(7.5,4.5){\tiny\( \bullet \)};
\filldraw[red,draw=black] (8,5) rectangle (9,4);\node at(8.5,4.5){\tiny\( \bullet \)};
\filldraw[red,draw=black] (9,5) rectangle (10,4);\node at(9.5,4.5){\tiny\( \bullet \)};
\filldraw[red,draw=black] (10,5) rectangle (11,4);\node at(10.5,4.5){\tiny\( \bullet \)};
\filldraw[red,draw=black] (11,5) rectangle (12,4);\node at(11.5,4.5){\tiny\( \bullet \)};
\filldraw[red,draw=black] (12,5) rectangle (13,4);\node at(12.5,4.5){\tiny\( \bullet \)};
\draw (0,4) rectangle (1,3);
\draw (1,4) rectangle (2,3);
\draw (2,4) rectangle (3,3);
\draw (3,4) rectangle (4,3);\node at(3.5,3.5){\tiny\( 5 \)};
\draw (4,4) rectangle (5,3);
\draw (5,4) rectangle (6,3);
\filldraw[red,draw=black] (6,4) rectangle (7,3);
\filldraw[red,draw=black] (7,4) rectangle (8,3);\node at(7.5,3.5){\tiny\( \bullet \)};
\filldraw[red,draw=black] (8,4) rectangle (9,3);\node at(8.5,3.5){\tiny\( \bullet \)};
\filldraw[red,draw=black] (9,4) rectangle (10,3);\node at(9.5,3.5){\tiny\( \bullet \)};
\filldraw[red,draw=black] (10,4) rectangle (11,3);\node at(10.5,3.5){\tiny\( \bullet \)};
\filldraw[red,draw=black] (11,4) rectangle (12,3);\node at(11.5,3.5){\tiny\( \bullet \)};
\filldraw[red,draw=black] (12,4) rectangle (13,3);\node at(12.5,3.5){\tiny\( \bullet \)};
\draw (0,3) rectangle (1,2);
\draw (1,3) rectangle (2,2);
\draw (2,3) rectangle (3,2);
\draw (3,3) rectangle (4,2);
\draw (4,3) rectangle (5,2);\node at(4.5,2.5){\tiny\( 4 \)};
\draw (5,3) rectangle (6,2);
\draw (6,3) rectangle (7,2);
\draw (7,3) rectangle (8,2);
\filldraw[red,draw=black] (8,3) rectangle (9,2);
\filldraw[red,draw=black] (9,3) rectangle (10,2);\node at(9.5,2.5){\tiny\( \bullet \)};
\filldraw[red,draw=black] (10,3) rectangle (11,2);\node at(10.5,2.5){\tiny\( \bullet \)};
\filldraw[red,draw=black] (11,3) rectangle (12,2);\node at(11.5,2.5){\tiny\( \bullet \)};
\filldraw[red,draw=black] (12,3) rectangle (13,2);\node at(12.5,2.5){\tiny\( \bullet \)};
\draw (0,2) rectangle (1,1);
\draw (1,2) rectangle (2,1);
\draw (2,2) rectangle (3,1);
\draw (3,2) rectangle (4,1);
\draw (4,2) rectangle (5,1);
\draw (5,2) rectangle (6,1);\node at(5.5,1.5){\tiny\( 4 \)};
\draw (6,2) rectangle (7,1);
\draw (7,2) rectangle (8,1);
\draw (8,2) rectangle (9,1);
\filldraw[red,draw=black] (9,2) rectangle (10,1);4
\filldraw[red,draw=black] (10,2) rectangle (11,1);\node at(10.5,1.5){\tiny\( \bullet \)};
\filldraw[red,draw=black] (11,2) rectangle (12,1);\node at(11.5,1.5){\tiny\( \bullet \)};
\filldraw[red,draw=black] (12,2) rectangle (13,1);\node at(12.5,1.5){\tiny\( \bullet \)};
\draw (0,1) rectangle (1,0);
\draw (1,1) rectangle (2,0);
\draw (2,1) rectangle (3,0);
\draw (3,1) rectangle (4,0);
\draw (4,1) rectangle (5,0);
\draw (5,1) rectangle (6,0);
\draw (6,1) rectangle (7,0);\node at(6.5,0.5){\tiny\( 4 \)};
\draw (7,1) rectangle (8,0);
\draw (8,1) rectangle (9,0);
\draw (9,1) rectangle (10,0);
\filldraw[red,draw=black] (10,1) rectangle (11,0);
\filldraw[red,draw=black] (11,1) rectangle (12,0);\node at(11.5,0.5){\tiny\( \bullet \)};
\filldraw[red,draw=black] (12,1) rectangle (13,0);\node at(12.5,0.5){\tiny\( \bullet \)};
\draw (0,0) rectangle (1,-1);
\draw (1,0) rectangle (2,-1);
\draw (2,0) rectangle (3,-1);
\draw (3,0) rectangle (4,-1);
\draw (4,0) rectangle (5,-1);
\draw (5,0) rectangle (6,-1);
\draw (6,0) rectangle (7,-1);
\draw (7,0) rectangle (8,-1);\node at(7.5,-0.5){\tiny\( 3 \)};
\draw (8,0) rectangle (9,-1);
\draw (9,0) rectangle (10,-1);
\draw (10,0) rectangle (11,-1);
\draw (11,0) rectangle (12,-1);
\filldraw[red,draw=black] (12,0) rectangle (13,-1);
\draw (0,-1) rectangle (1,-2);
\draw (1,-1) rectangle (2,-2);
\draw (2,-1) rectangle (3,-2);
\draw (3,-1) rectangle (4,-2);
\draw (4,-1) rectangle (5,-2);
\draw (5,-1) rectangle (6,-2);
\draw (6,-1) rectangle (7,-2);
\draw (7,-1) rectangle (8,-2);
\draw (8,-1) rectangle (9,-2);\node at(8.5,-1.5){\tiny\( 3 \)};
\draw (9,-1) rectangle (10,-2);
\draw (10,-1) rectangle (11,-2);
\draw (11,-1) rectangle (12,-2);
\draw (12,-1) rectangle (13,-2);
\draw (0,-2) rectangle (1,-3);
\draw (1,-2) rectangle (2,-3);
\draw (2,-2) rectangle (3,-3);
\draw (3,-2) rectangle (4,-3);
\draw (4,-2) rectangle (5,-3);
\draw (5,-2) rectangle (6,-3);
\draw (6,-2) rectangle (7,-3);
\draw (7,-2) rectangle (8,-3);
\draw (8,-2) rectangle (9,-3);
\draw (9,-2) rectangle (10,-3);\node at(9.5,-2.5){\tiny\( 3 \)};
\draw (10,-2) rectangle (11,-3);
\draw (11,-2) rectangle (12,-3);
\draw (12,-2) rectangle (13,-3);
\draw (0,-3) rectangle (1,-4);
\draw (1,-3) rectangle (2,-4);
\draw (2,-3) rectangle (3,-4);
\draw (3,-3) rectangle (4,-4);
\draw (4,-3) rectangle (5,-4);
\draw (5,-3) rectangle (6,-4);
\draw (6,-3) rectangle (7,-4);
\draw (7,-3) rectangle (8,-4);
\draw (8,-3) rectangle (9,-4);
\draw (9,-3) rectangle (10,-4);
\draw (10,-3) rectangle (11,-4);\node at(10.5,-3.5){\tiny\( 2 \)};
\draw (11,-3) rectangle (12,-4);
\draw (12,-3) rectangle (13,-4);
\draw (0,-4) rectangle (1,-5);
\draw (1,-4) rectangle (2,-5);
\draw (2,-4) rectangle (3,-5);
\draw (3,-4) rectangle (4,-5);
\draw (4,-4) rectangle (5,-5);
\draw (5,-4) rectangle (6,-5);
\draw (6,-4) rectangle (7,-5);
\draw (7,-4) rectangle (8,-5);
\draw (8,-4) rectangle (9,-5);
\draw (9,-4) rectangle (10,-5);
\draw (10,-4) rectangle (11,-5);
\draw (11,-4) rectangle (12,-5);\node at(11.5,-4.5){\tiny\( 2 \)};
\draw (12,-4) rectangle (13,-5);
\draw (0,-5) rectangle (1,-6);
\draw (1,-5) rectangle (2,-6);
\draw (2,-5) rectangle (3,-6);
\draw (3,-5) rectangle (4,-6);
\draw (4,-5) rectangle (5,-6);
\draw (5,-5) rectangle (6,-6);
\draw (6,-5) rectangle (7,-6);
\draw (7,-5) rectangle (8,-6);
\draw (8,-5) rectangle (9,-6);
\draw (9,-5) rectangle (10,-6);
\draw (10,-5) rectangle (11,-6);
\draw (11,-5) rectangle (12,-6);
\draw (12,-5) rectangle (13,-6);\node at(12.5,-5.5){\tiny\( 1 \)};
%
\end{tikzpicture}
\, .
\end{aligned}
\end{equation*}
For $\lambda = (7,6,5,5,4,4,4,\textcolor{red}{4},3,3,2,2,1)\in{\rm P}_{13}^7$,
$$\lambda-\epsilon_{7} = (7,6,5,5,4,4,\textcolor{green}{3},4,3,3,2,2,1)\notin{\rm P}_{13}^7.$$
Thus
\begin{equation*}
L_{\bDo{\lambda}{2}{z}}\sg{\lambda}{z} =
L_7 \,
\begin{tikzpicture}[scale=.3,line width=0.5pt,baseline=(a.base)]
\draw (0,7) rectangle (1,6);\node at(0.5,6.5){\tiny\( 7 \)};
\filldraw[red,draw=black] (1,7) rectangle (2,6);\node at(1.5,6.5){\tiny\( \bullet \)};
\filldraw[red,draw=black] (2,7) rectangle (3,6);\node at(2.5,6.5){\tiny\( \bullet \)};
\filldraw[red,draw=black] (3,7) rectangle (4,6);\node at(3.5,6.5){\tiny\( \bullet \)};
\filldraw[red,draw=black] (4,7) rectangle (5,6);\node at(4.5,6.5){\tiny\( \bullet \)};
\filldraw[red,draw=black] (5,7) rectangle (6,6);\node at(5.5,6.5){\tiny\( \bullet \)};
\filldraw[red,draw=black] (6,7) rectangle (7,6);\node at(6.5,6.5){\tiny\( \bullet \)};
\filldraw[red,draw=black] (7,7) rectangle (8,6);\node at(7.5,6.5){\tiny\( \bullet \)};
\filldraw[red,draw=black] (8,7) rectangle (9,6);\node at(8.5,6.5){\tiny\( \bullet \)};
\filldraw[red,draw=black] (9,7) rectangle (10,6);\node at(9.5,6.5){\tiny\( \bullet \)};
\filldraw[red,draw=black] (10,7) rectangle (11,6);\node at(10.5,6.5){\tiny\( \bullet \)};
\filldraw[red,draw=black] (11,7) rectangle (12,6);\node at(11.5,6.5){\tiny\( \bullet \)};
\filldraw[red,draw=black] (12,7) rectangle (13,6);\node at(12.5,6.5){\tiny\( \bullet \)};
\draw (0,6) rectangle (1,5);
\draw (1,6) rectangle (2,5);\node at(1.5,5.5){\tiny\( 6 \)};
\draw (2,6) rectangle (3,5);
\filldraw[red,draw=black] (3,6) rectangle (4,5);
\filldraw[red,draw=black] (4,6) rectangle (5,5);\node at(4.5,5.5){\tiny\( \bullet \)};
\filldraw[red,draw=black] (5,6) rectangle (6,5);\node at(5.5,5.5){\tiny\( \bullet \)};
\filldraw[red,draw=black] (6,6) rectangle (7,5);\node at(6.5,5.5){\tiny\( \bullet \)};
\filldraw[red,draw=black] (7,6) rectangle (8,5);\node at(7.5,5.5){\tiny\( \bullet \)};
\filldraw[red,draw=black] (8,6) rectangle (9,5);\node at(8.5,5.5){\tiny\( \bullet \)};
\filldraw[red,draw=black] (9,6) rectangle (10,5);\node at(9.5,5.5){\tiny\( \bullet \)};
\filldraw[red,draw=black] (10,6) rectangle (11,5);\node at(10.5,5.5){\tiny\( \bullet \)};
\filldraw[red,draw=black] (11,6) rectangle (12,5);\node at(11.5,5.5){\tiny\( \bullet \)};
\filldraw[red,draw=black] (12,6) rectangle (13,5);\node at(12.5,5.5){\tiny\( \bullet \)};
\draw (0,5) rectangle (1,4);
\draw (1,5) rectangle (2,4);
\draw (2,5) rectangle (3,4);\node at(2.5,4.5){\tiny\( 5 \)};
\draw (3,5) rectangle (4,4);
\draw (4,5) rectangle (5,4);
\filldraw[red,draw=black] (5,5) rectangle (6,4);
\filldraw[red,draw=black] (6,5) rectangle (7,4);\node at(6.5,4.5){\tiny\( \bullet \)};
\filldraw[red,draw=black] (7,5) rectangle (8,4);\node at(7.5,4.5){\tiny\( \bullet \)};
\filldraw[red,draw=black] (8,5) rectangle (9,4);\node at(8.5,4.5){\tiny\( \bullet \)};
\filldraw[red,draw=black] (9,5) rectangle (10,4);\node at(9.5,4.5){\tiny\( \bullet \)};
\filldraw[red,draw=black] (10,5) rectangle (11,4);\node at(10.5,4.5){\tiny\( \bullet \)};
\filldraw[red,draw=black] (11,5) rectangle (12,4);\node at(11.5,4.5){\tiny\( \bullet \)};
\filldraw[red,draw=black] (12,5) rectangle (13,4);\node at(12.5,4.5){\tiny\( \bullet \)};
\draw (0,4) rectangle (1,3);
\draw (1,4) rectangle (2,3);
\draw (2,4) rectangle (3,3);
\draw (3,4) rectangle (4,3);\node at(3.5,3.5){\tiny\( 5 \)};
\draw (4,4) rectangle (5,3);
\draw (5,4) rectangle (6,3);
\filldraw[red,draw=black] (6,4) rectangle (7,3);
\filldraw[red,draw=black] (7,4) rectangle (8,3);\node at(7.5,3.5){\tiny\( \bullet \)};
\filldraw[red,draw=black] (8,4) rectangle (9,3);\node at(8.5,3.5){\tiny\( \bullet \)};
\filldraw[red,draw=black] (9,4) rectangle (10,3);\node at(9.5,3.5){\tiny\( \bullet \)};
\filldraw[red,draw=black] (10,4) rectangle (11,3);\node at(10.5,3.5){\tiny\( \bullet \)};
\filldraw[red,draw=black] (11,4) rectangle (12,3);\node at(11.5,3.5){\tiny\( \bullet \)};
\filldraw[red,draw=black] (12,4) rectangle (13,3);\node at(12.5,3.5){\tiny\( \bullet \)};
\draw (0,3) rectangle (1,2);
\draw (1,3) rectangle (2,2);
\draw (2,3) rectangle (3,2);
\draw (3,3) rectangle (4,2);
\draw (4,3) rectangle (5,2);\node at(4.5,2.5){\tiny\( 4 \)};
\draw (5,3) rectangle (6,2);
\draw (6,3) rectangle (7,2);
\draw (7,3) rectangle (8,2);
\filldraw[red,draw=black] (8,3) rectangle (9,2);
\filldraw[red,draw=black] (9,3) rectangle (10,2);\node at(9.5,2.5){\tiny\( \bullet \)};
\filldraw[red,draw=black] (10,3) rectangle (11,2);\node at(10.5,2.5){\tiny\( \bullet \)};
\filldraw[red,draw=black] (11,3) rectangle (12,2);\node at(11.5,2.5){\tiny\( \bullet \)};
\filldraw[red,draw=black] (12,3) rectangle (13,2);\node at(12.5,2.5){\tiny\( \bullet \)};
\draw (0,2) rectangle (1,1);
\draw (1,2) rectangle (2,1);
\draw (2,2) rectangle (3,1);
\draw (3,2) rectangle (4,1);
\draw (4,2) rectangle (5,1);
\draw (5,2) rectangle (6,1);\node at(5.5,1.5){\tiny\( 4 \)};
\draw (6,2) rectangle (7,1);
\draw (7,2) rectangle (8,1);
\draw (8,2) rectangle (9,1);
\filldraw[red,draw=black] (9,2) rectangle (10,1);4
\filldraw[red,draw=black] (10,2) rectangle (11,1);\node at(10.5,1.5){\tiny\( \bullet \)};
\filldraw[red,draw=black] (11,2) rectangle (12,1);\node at(11.5,1.5){\tiny\( \bullet \)};
\filldraw[red,draw=black] (12,2) rectangle (13,1);\node at(12.5,1.5){\tiny\( \bullet \)};
\draw (0,1) rectangle (1,0);
\draw (1,1) rectangle (2,0);
\draw (2,1) rectangle (3,0);
\draw (3,1) rectangle (4,0);
\draw (4,1) rectangle (5,0);
\draw (5,1) rectangle (6,0);
\draw (6,1) rectangle (7,0);\node at(6.5,0.5){\tiny\( 4 \)};
\draw (7,1) rectangle (8,0);
\draw (8,1) rectangle (9,0);
\draw (9,1) rectangle (10,0);
\filldraw[red,draw=black] (10,1) rectangle (11,0);
\filldraw[red,draw=black] (11,1) rectangle (12,0);\node at(11.5,0.5){\tiny\( \bullet \)};
\filldraw[red,draw=black] (12,1) rectangle (13,0);\node at(12.5,0.5){\tiny\( \bullet \)};
\draw (0,0) rectangle (1,-1);
\draw (1,0) rectangle (2,-1);
\draw (2,0) rectangle (3,-1);
\draw (3,0) rectangle (4,-1);
\draw (4,0) rectangle (5,-1);
\draw (5,0) rectangle (6,-1);
\draw (6,0) rectangle (7,-1);
\draw (7,0) rectangle (8,-1);\node at(7.5,-0.5){\tiny\( 4 \)};
\draw (8,0) rectangle (9,-1);
\draw (9,0) rectangle (10,-1);
\draw (10,0) rectangle (11,-1);
\filldraw[red,draw=black] (11,0) rectangle (12,-1);
\filldraw[red,draw=black] (12,0) rectangle (13,-1);\node at(12.5,-0.5){\tiny\( \bullet \)};
\draw (0,-1) rectangle (1,-2);
\draw (1,-1) rectangle (2,-2);
\draw (2,-1) rectangle (3,-2);
\draw (3,-1) rectangle (4,-2);
\draw (4,-1) rectangle (5,-2);
\draw (5,-1) rectangle (6,-2);
\draw (6,-1) rectangle (7,-2);
\draw (7,-1) rectangle (8,-2);
\draw (8,-1) rectangle (9,-2);\node at(8.5,-1.5){\tiny\( 3 \)};
\draw (9,-1) rectangle (10,-2);
\draw (10,-1) rectangle (11,-2);
\draw (11,-1) rectangle (12,-2);
\draw (12,-1) rectangle (13,-2);
\draw (0,-2) rectangle (1,-3);
\draw (1,-2) rectangle (2,-3);
\draw (2,-2) rectangle (3,-3);
\draw (3,-2) rectangle (4,-3);
\draw (4,-2) rectangle (5,-3);
\draw (5,-2) rectangle (6,-3);
\draw (6,-2) rectangle (7,-3);
\draw (7,-2) rectangle (8,-3);
\draw (8,-2) rectangle (9,-3);
\draw (9,-2) rectangle (10,-3);\node at(9.5,-2.5){\tiny\( 3 \)};
\draw (10,-2) rectangle (11,-3);
\draw (11,-2) rectangle (12,-3);
\draw (12,-2) rectangle (13,-3);
\draw (0,-3) rectangle (1,-4);
\draw (1,-3) rectangle (2,-4);
\draw (2,-3) rectangle (3,-4);
\draw (3,-3) rectangle (4,-4);
\draw (4,-3) rectangle (5,-4);
\draw (5,-3) rectangle (6,-4);
\draw (6,-3) rectangle (7,-4);
\draw (7,-3) rectangle (8,-4);
\draw (8,-3) rectangle (9,-4);
\draw (9,-3) rectangle (10,-4);
\draw (10,-3) rectangle (11,-4);\node at(10.5,-3.5){\tiny\( 2 \)};
\draw (11,-3) rectangle (12,-4);
\draw (12,-3) rectangle (13,-4);
\draw (0,-4) rectangle (1,-5);
\draw (1,-4) rectangle (2,-5);
\draw (2,-4) rectangle (3,-5);
\draw (3,-4) rectangle (4,-5);
\draw (4,-4) rectangle (5,-5);
\draw (5,-4) rectangle (6,-5);
\draw (6,-4) rectangle (7,-5);
\draw (7,-4) rectangle (8,-5);
\draw (8,-4) rectangle (9,-5);
\draw (9,-4) rectangle (10,-5);
\draw (10,-4) rectangle (11,-5);
\draw (11,-4) rectangle (12,-5);\node at(11.5,-4.5){\tiny\( 2 \)};
\draw (12,-4) rectangle (13,-5);
\draw (0,-5) rectangle (1,-6);
\draw (1,-5) rectangle (2,-6);
\draw (2,-5) rectangle (3,-6);
\draw (3,-5) rectangle (4,-6);
\draw (4,-5) rectangle (5,-6);
\draw (5,-5) rectangle (6,-6);
\draw (6,-5) rectangle (7,-6);
\draw (7,-5) rectangle (8,-6);
\draw (8,-5) rectangle (9,-6);
\draw (9,-5) rectangle (10,-6);
\draw (10,-5) rectangle (11,-6);
\draw (11,-5) rectangle (12,-6);
\draw (12,-5) rectangle (13,-6);\node at(12.5,-5.5){\tiny\( 1 \)};
\draw[purple,line width=0.8pt] (4,3.5)--(6,3.5);
\draw[purple,line width=0.8pt] (3.5,4)--(3.5,5);
\draw[purple,line width=0.8pt] (6.5,3)--(6.5,1);
\draw[purple,line width=0.8pt] (4,3.5)--(6,3.5);
\draw[purple,line width=0.8pt] (7,0.5)--(10,0.5);
%
\draw[black,line width=0.8pt,->] (13,5.5)--(13.8,5.5);
\node at (14.1,5.5){ \tiny{\textbf{$2$}} };
\draw[black,line width=0.8pt,->] (13,0.5)--(13.8,0.5);
\node at (14.1,0.5){ \tiny{\textbf{$7$}} };
\draw[black,line width=0.8pt,->] (10.5,7)--(10.5,7.8);
\node at (10.5,8.2){ \tiny{\textbf{$11$}} };
\end{tikzpicture}
=
L_{11} \,
\begin{tikzpicture}[scale=.3,line width=0.5pt,baseline=(a.base)]
\draw (0,7) rectangle (1,6);\node at(0.5,6.5){\tiny\( 7 \)};
\filldraw[red,draw=black] (1,7) rectangle (2,6);\node at(1.5,6.5){\tiny\( \bullet \)};
\filldraw[red,draw=black] (2,7) rectangle (3,6);\node at(2.5,6.5){\tiny\( \bullet \)};
\filldraw[red,draw=black] (3,7) rectangle (4,6);\node at(3.5,6.5){\tiny\( \bullet \)};
\filldraw[red,draw=black] (4,7) rectangle (5,6);\node at(4.5,6.5){\tiny\( \bullet \)};
\filldraw[red,draw=black] (5,7) rectangle (6,6);\node at(5.5,6.5){\tiny\( \bullet \)};
\filldraw[red,draw=black] (6,7) rectangle (7,6);\node at(6.5,6.5){\tiny\( \bullet \)};
\filldraw[red,draw=black] (7,7) rectangle (8,6);\node at(7.5,6.5){\tiny\( \bullet \)};
\filldraw[red,draw=black] (8,7) rectangle (9,6);\node at(8.5,6.5){\tiny\( \bullet \)};
\filldraw[red,draw=black] (9,7) rectangle (10,6);\node at(9.5,6.5){\tiny\( \bullet \)};
\filldraw[red,draw=black] (10,7) rectangle (11,6);\node at(10.5,6.5){\tiny\( \bullet \)};
\filldraw[red,draw=black] (11,7) rectangle (12,6);\node at(11.5,6.5){\tiny\( \bullet \)};
\filldraw[red,draw=black] (12,7) rectangle (13,6);\node at(12.5,6.5){\tiny\( \bullet \)};
\draw (0,6) rectangle (1,5);
\draw (1,6) rectangle (2,5);\node at(1.5,5.5){\tiny\( 6 \)};
\draw (2,6) rectangle (3,5);
\filldraw[red,draw=black] (3,6) rectangle (4,5);
\filldraw[red,draw=black] (4,6) rectangle (5,5);\node at(4.5,5.5){\tiny\( \bullet \)};
\filldraw[red,draw=black] (5,6) rectangle (6,5);\node at(5.5,5.5){\tiny\( \bullet \)};
\filldraw[red,draw=black] (6,6) rectangle (7,5);\node at(6.5,5.5){\tiny\( \bullet \)};
\filldraw[red,draw=black] (7,6) rectangle (8,5);\node at(7.5,5.5){\tiny\( \bullet \)};
\filldraw[red,draw=black] (8,6) rectangle (9,5);\node at(8.5,5.5){\tiny\( \bullet \)};
\filldraw[red,draw=black] (9,6) rectangle (10,5);\node at(9.5,5.5){\tiny\( \bullet \)};
\filldraw[red,draw=black] (10,6) rectangle (11,5);\node at(10.5,5.5){\tiny\( \bullet \)};
\filldraw[red,draw=black] (11,6) rectangle (12,5);\node at(11.5,5.5){\tiny\( \bullet \)};
\filldraw[red,draw=black] (12,6) rectangle (13,5);\node at(12.5,5.5){\tiny\( \bullet \)};
\draw (0,5) rectangle (1,4);
\draw (1,5) rectangle (2,4);
\draw (2,5) rectangle (3,4);\node at(2.5,4.5){\tiny\( 5 \)};
\draw (3,5) rectangle (4,4);
\draw (4,5) rectangle (5,4);
\filldraw[red,draw=black] (5,5) rectangle (6,4);
\filldraw[red,draw=black] (6,5) rectangle (7,4);\node at(6.5,4.5){\tiny\( \bullet \)};
\filldraw[red,draw=black] (7,5) rectangle (8,4);\node at(7.5,4.5){\tiny\( \bullet \)};
\filldraw[red,draw=black] (8,5) rectangle (9,4);\node at(8.5,4.5){\tiny\( \bullet \)};
\filldraw[red,draw=black] (9,5) rectangle (10,4);\node at(9.5,4.5){\tiny\( \bullet \)};
\filldraw[red,draw=black] (10,5) rectangle (11,4);\node at(10.5,4.5){\tiny\( \bullet \)};
\filldraw[red,draw=black] (11,5) rectangle (12,4);\node at(11.5,4.5){\tiny\( \bullet \)};
\filldraw[red,draw=black] (12,5) rectangle (13,4);\node at(12.5,4.5){\tiny\( \bullet \)};
\draw (0,4) rectangle (1,3);
\draw (1,4) rectangle (2,3);
\draw (2,4) rectangle (3,3);
\draw (3,4) rectangle (4,3);\node at(3.5,3.5){\tiny\( 5 \)};
\draw (4,4) rectangle (5,3);
\draw (5,4) rectangle (6,3);
\filldraw[red,draw=black] (6,4) rectangle (7,3);
\filldraw[red,draw=black] (7,4) rectangle (8,3);\node at(7.5,3.5){\tiny\( \bullet \)};
\filldraw[red,draw=black] (8,4) rectangle (9,3);\node at(8.5,3.5){\tiny\( \bullet \)};
\filldraw[red,draw=black] (9,4) rectangle (10,3);\node at(9.5,3.5){\tiny\( \bullet \)};
\filldraw[red,draw=black] (10,4) rectangle (11,3);\node at(10.5,3.5){\tiny\( \bullet \)};
\filldraw[red,draw=black] (11,4) rectangle (12,3);\node at(11.5,3.5){\tiny\( \bullet \)};
\filldraw[red,draw=black] (12,4) rectangle (13,3);\node at(12.5,3.5){\tiny\( \bullet \)};
\draw (0,3) rectangle (1,2);
\draw (1,3) rectangle (2,2);
\draw (2,3) rectangle (3,2);
\draw (3,3) rectangle (4,2);
\draw (4,3) rectangle (5,2);\node at(4.5,2.5){\tiny\( 4 \)};
\draw (5,3) rectangle (6,2);
\draw (6,3) rectangle (7,2);
\draw (7,3) rectangle (8,2);
\filldraw[red,draw=black] (8,3) rectangle (9,2);
\filldraw[red,draw=black] (9,3) rectangle (10,2);\node at(9.5,2.5){\tiny\( \bullet \)};
\filldraw[red,draw=black] (10,3) rectangle (11,2);\node at(10.5,2.5){\tiny\( \bullet \)};
\filldraw[red,draw=black] (11,3) rectangle (12,2);\node at(11.5,2.5){\tiny\( \bullet \)};
\filldraw[red,draw=black] (12,3) rectangle (13,2);\node at(12.5,2.5){\tiny\( \bullet \)};
\draw (0,2) rectangle (1,1);
\draw (1,2) rectangle (2,1);
\draw (2,2) rectangle (3,1);
\draw (3,2) rectangle (4,1);
\draw (4,2) rectangle (5,1);
\draw (5,2) rectangle (6,1);\node at(5.5,1.5){\tiny\( 4 \)};
\draw (6,2) rectangle (7,1);
\draw (7,2) rectangle (8,1);
\draw (8,2) rectangle (9,1);
\filldraw[red,draw=black] (9,2) rectangle (10,1);4
\filldraw[red,draw=black] (10,2) rectangle (11,1);\node at(10.5,1.5){\tiny\( \bullet \)};
\filldraw[red,draw=black] (11,2) rectangle (12,1);\node at(11.5,1.5){\tiny\( \bullet \)};
\filldraw[red,draw=black] (12,2) rectangle (13,1);\node at(12.5,1.5){\tiny\( \bullet \)};
\draw (0,1) rectangle (1,0);
\draw (1,1) rectangle (2,0);
\draw (2,1) rectangle (3,0);
\draw (3,1) rectangle (4,0);
\draw (4,1) rectangle (5,0);
\draw (5,1) rectangle (6,0);
\draw (6,1) rectangle (7,0);\node at(6.5,0.5){\tiny\( 4 \)};
\draw (7,1) rectangle (8,0);
\draw (8,1) rectangle (9,0);
\draw (9,1) rectangle (10,0);
\filldraw[red,draw=black] (10,1) rectangle (11,0);
\filldraw[red,draw=black] (11,1) rectangle (12,0);\node at(11.5,0.5){\tiny\( \bullet \)};
\filldraw[red,draw=black] (12,1) rectangle (13,0);\node at(12.5,0.5){\tiny\( \bullet \)};
\draw (0,0) rectangle (1,-1);
\draw (1,0) rectangle (2,-1);
\draw (2,0) rectangle (3,-1);
\draw (3,0) rectangle (4,-1);
\draw (4,0) rectangle (5,-1);
\draw (5,0) rectangle (6,-1);
\draw (6,0) rectangle (7,-1);
\draw (7,0) rectangle (8,-1);\node at(7.5,-0.5){\tiny\( 4 \)};
\draw (8,0) rectangle (9,-1);
\draw (9,0) rectangle (10,-1);
\draw (10,0) rectangle (11,-1);
\filldraw[red,draw=black] (11,0) rectangle (12,-1);
\filldraw[red,draw=black] (12,0) rectangle (13,-1);\node at(12.5,-0.5){\tiny\( \bullet \)};
\draw (0,-1) rectangle (1,-2);
\draw (1,-1) rectangle (2,-2);
\draw (2,-1) rectangle (3,-2);
\draw (3,-1) rectangle (4,-2);
\draw (4,-1) rectangle (5,-2);
\draw (5,-1) rectangle (6,-2);
\draw (6,-1) rectangle (7,-2);
\draw (7,-1) rectangle (8,-2);
\draw (8,-1) rectangle (9,-2);\node at(8.5,-1.5){\tiny\( 3 \)};
\draw (9,-1) rectangle (10,-2);
\draw (10,-1) rectangle (11,-2);
\draw (11,-1) rectangle (12,-2);
\draw (12,-1) rectangle (13,-2);
\draw (0,-2) rectangle (1,-3);
\draw (1,-2) rectangle (2,-3);
\draw (2,-2) rectangle (3,-3);
\draw (3,-2) rectangle (4,-3);
\draw (4,-2) rectangle (5,-3);
\draw (5,-2) rectangle (6,-3);
\draw (6,-2) rectangle (7,-3);
\draw (7,-2) rectangle (8,-3);
\draw (8,-2) rectangle (9,-3);
\draw (9,-2) rectangle (10,-3);\node at(9.5,-2.5){\tiny\( 3 \)};
\draw (10,-2) rectangle (11,-3);
\draw (11,-2) rectangle (12,-3);
\draw (12,-2) rectangle (13,-3);
\draw (0,-3) rectangle (1,-4);
\draw (1,-3) rectangle (2,-4);
\draw (2,-3) rectangle (3,-4);
\draw (3,-3) rectangle (4,-4);
\draw (4,-3) rectangle (5,-4);
\draw (5,-3) rectangle (6,-4);
\draw (6,-3) rectangle (7,-4);
\draw (7,-3) rectangle (8,-4);
\draw (8,-3) rectangle (9,-4);
\draw (9,-3) rectangle (10,-4);
\draw (10,-3) rectangle (11,-4);\node at(10.5,-3.5){\tiny\( 2 \)};
\draw (11,-3) rectangle (12,-4);
\draw (12,-3) rectangle (13,-4);
\draw (0,-4) rectangle (1,-5);
\draw (1,-4) rectangle (2,-5);
\draw (2,-4) rectangle (3,-5);
\draw (3,-4) rectangle (4,-5);
\draw (4,-4) rectangle (5,-5);
\draw (5,-4) rectangle (6,-5);
\draw (6,-4) rectangle (7,-5);
\draw (7,-4) rectangle (8,-5);
\draw (8,-4) rectangle (9,-5);
\draw (9,-4) rectangle (10,-5);
\draw (10,-4) rectangle (11,-5);
\draw (11,-4) rectangle (12,-5);\node at(11.5,-4.5){\tiny\( 2 \)};
\draw (12,-4) rectangle (13,-5);
\draw (0,-5) rectangle (1,-6);
\draw (1,-5) rectangle (2,-6);
\draw (2,-5) rectangle (3,-6);
\draw (3,-5) rectangle (4,-6);
\draw (4,-5) rectangle (5,-6);
\draw (5,-5) rectangle (6,-6);
\draw (6,-5) rectangle (7,-6);
\draw (7,-5) rectangle (8,-6);
\draw (8,-5) rectangle (9,-6);
\draw (9,-5) rectangle (10,-6);
\draw (10,-5) rectangle (11,-6);
\draw (11,-5) rectangle (12,-6);
\draw (12,-5) rectangle (13,-6);\node at(12.5,-5.5){\tiny\( 1 \)};
%
\end{tikzpicture}
\, .
\end{equation*}
\hspace*{\fill}$\square$
\mlabel{ex:L7}
\end{exam}

\subsection{A recursive formula for $\sg{\lambda}{z+1}$}
In this subsection, we express the $K$-$k$-Schur functions $\sg{\lambda}{z+1}$ of weight $z+1$ in terms of those of lower weight $z$.

\begin{prop}
Let $\lambda\in\pkl$ and $z\in[\bott_\lambda]$. Suppose $\lambda_{z}>\lambda_{z+1}$ when $z\in[\bott_\lambda-1]$. Then
\begin{equation}
\sg{\lambda}{z+1} =\sum_{\mu\in\pkl, \, \mu_x = \lambda_x\,\textup{for}\,x\in[z]}b_{\lambda\mu}\sg{\mu}{z}, \quad \text{where }\, (-1)^{|\lambda|-|\mu|} b_{\lambda\mu}\in\ZZ_{\geq0}.
\mlabel{eq:ind}
\end{equation}
\mlabel{prop:ind}
\end{prop}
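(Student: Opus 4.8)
The plan is to reduce everything to a single statement about lowering operators acting on weighted $K$-$k$-Schur functions, and then to bootstrap it using Propositions~\mref{prop:bigd} and~\mref{prop:smalld} by a two-layer induction. By Lemma~\mref{lem:ztoz1} we have $\sg{\lambda}{z+1}=\sg{\lambda}{z}-L_{\bdo{\lambda}{z}}\sg{\lambda}{z}$, and the summand $\sg{\lambda}{z}$ already has the desired shape (it is the $\mu=\lambda$ contribution, with $b_{\lambda\lambda}=1$ and $(-1)^{0}\cdot 1\ge 0$). So it suffices to establish the claim $(\star)$: for every $\lambda\in\pkl$, every $z\in[\bott_\lambda]$ with $\lambda_z>\lambda_{z+1}$ when $z\in[\bott_\lambda-1]$, and every $a\in\ZZ_{\ge 1}$ for which $\bDo{\lambda}{a}{z}$ is defined, one may write $L_{\bDo{\lambda}{a}{z}}\sg{\lambda}{z}=\sum_{\mu}c_{\lambda\mu}\sg{\mu}{z}$, where $\mu$ ranges over $\pkl$ with $\mu_x=\lambda_x$ for $x\in[z]$ and $|\mu|<|\lambda|$, and where $(-1)^{|\lambda|-1-|\mu|}c_{\lambda\mu}\in\ZZ_{\ge 0}$. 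Granting $(\star)$ with $a=1$ (so $\bDo{\lambda}{1}{z}=\bdo{\lambda}{z}$) and feeding it into Lemma~\mref{lem:ztoz1}, one reads off $b_{\lambda\lambda}=1$ and $b_{\lambda\mu}=-c_{\lambda\mu}$ for $\mu\ne\lambda$ (no collision, since $|\mu|<|\lambda|$); the identity $(-1)^{|\lambda|-|\mu|}=-(-1)^{|\lambda|-1-|\mu|}$ then turns the sign condition on $c_{\lambda\mu}$ into $(-1)^{|\lambda|-|\mu|}b_{\lambda\mu}\ge 0$, which is exactly~\meqref{eq:ind}.

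To prove $(\star)$ I would use strong induction on $|\lambda|$ and, for each fixed $\lambda$, a finite \emph{descending} induction on $a$ along the strictly increasing down-chain $z=\bDo{\lambda}{0}{z}<\bDo{\lambda}{1}{z}<\cdots<\bDo{\lambda}{t}{z}$, where $t\ge 1$ is the least index with $\bDo{\lambda}{t}{z}\in[\bott_\lambda+1,\ell]$; such $t$ exists because the chain reaches $\text{bot}_{\dkl}(z)\in[\bott_\lambda+1,\ell]$ by Remark~\mref{re:rootfact}~(1), and then $\bDo{\lambda}{1}{z},\dots,\bDo{\lambda}{t-1}{z}\in[\bott_\lambda]$ while $a$ runs over $[t]$. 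The base case $a=t$ is precisely the hypothesis of Proposition~\mref{prop:bigd}: setting $\gamma:=\lambda-\epsilon_{\bDo{\lambda}{t}{z}}$, part~(i) gives $L_{\bDo{\lambda}{t}{z}}\sg{\lambda}{z}=\sg{\gamma}{z}$ when $\gamma\in\pkl$ — a single term with $|\gamma|=|\lambda|-1$, $\gamma_x=\lambda_x$ on $[z]$ (as $\bDo{\lambda}{t}{z}>\bott_\lambda\ge z$), and positive sign — while part~(ii) gives $0$ when $\gamma\notin\pkl$; the extra hypothesis of part~(ii) is available because either $z=\bott_\lambda$ (forcing $t=1$ and $\bDo{\lambda}{0}{z}=\bott_\lambda\notin[\bott_\lambda-1]$, so the condition is vacuous) or $z\in[\bott_\lambda-1]$ and $\lambda_z>\lambda_{z+1}$ holds by assumption.

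For the descending step $1\le a<t$ one has $\bDo{\lambda}{a}{z}\in[\bott_\lambda]$, which forces $z\in[\bott_\lambda-1]$ (otherwise $t=1$), hence $\lambda_z>\lambda_{z+1}$, so Proposition~\mref{prop:smalld} applies with this $a$; note also $\bDo{\lambda}{a}{z}>z$ since the chain increases. If $\gamma:=\lambda-\epsilon_{\bDo{\lambda}{a}{z}}\notin\pkl$, part~(ii) reduces $L_{\bDo{\lambda}{a}{z}}\sg{\lambda}{z}$ to $L_{\bDo{\lambda}{a+1}{z}}\sg{\lambda}{z}$, which the descending hypothesis handles at $a+1\le t$. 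If $\gamma\in\pkl$, part~(i) gives $L_{\bDo{\lambda}{a}{z}}\sg{\lambda}{z}=\sg{\gamma}{z}-L_{\bDo{\gamma}{a+1}{z}}\sg{\gamma}{z}+L_{\bDo{\lambda}{a+1}{z}}\sg{\lambda}{z}$ (with the convention $L_{\bDo{\gamma}{a+1}{z}}:=0$ if that index is undefined): the first summand is a good single term, the last is the descending hypothesis at $a+1$, and for the middle one — when $\bDo{\gamma}{a+1}{z}$ is defined — one checks $z\le\bott_\gamma$ and $\gamma_z=\lambda_z>\lambda_{z+1}\ge\gamma_{z+1}$, so $\gamma$ meets the hypotheses of $(\star)$; since $|\gamma|=|\lambda|-1<|\lambda|$, the outer induction hypothesis expands $L_{\bDo{\gamma}{a+1}{z}}\sg{\gamma}{z}$, and a one-line computation using $|\lambda|-1=|\gamma|$ shows the contributions $-c'_{\gamma\nu}$ carry the sign required by $(\star)$. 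Collecting the three pieces finishes the step, because all contributions to a fixed $\mu$ share the single sign $(-1)^{|\lambda|-1-|\mu|}$, so summing cannot destroy positivity. This closes both inductions, hence $(\star)$, hence Proposition~\mref{prop:ind}.

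The genuinely analytic content — the collapse of Katalan functions through the Mirror Lemma — is already isolated in Propositions~\mref{prop:bigd} and~\mref{prop:smalld}, so what remains is an assembly argument; the part I expect to demand the most care is the bookkeeping that keeps the recursion inside the hypotheses of $(\star)$, namely verifying at each branch that $\gamma=\lambda-\epsilon_{\bDo{\lambda}{a}{z}}\in\pkl$ still has $z\le\bott_\gamma$ and $\gamma_z>\gamma_{z+1}$ when $z\in[\bott_\gamma-1]$, together with the degree-one shift in the sign exponent triggered by each lowering operator. Termination is automatic: in the descending $a$-induction $\lambda$ is fixed and $a$ climbs toward $t$, while every branch term produced by Proposition~\mref{prop:smalld}~(i) drops the size to $|\gamma|=|\lambda|-1$.
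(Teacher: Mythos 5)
Your proposal is correct and follows essentially the same route as the paper: reduce via Lemma~\mref{lem:ztoz1} to an alternating expansion of $L_{\bDo{\lambda}{a}{z}}\sg{\lambda}{z}$, establish that expansion by starting at the bottom of the bounce path (Proposition~\mref{prop:bigd}) and climbing back up through Proposition~\mref{prop:smalld}, and track the single sign $(-1)^{|\lambda|-1-|\mu|}$ so that contributions to a fixed $\mu$ cannot cancel. The only organizational difference is that you dispose of the branch term $L_{\bDo{\gamma}{a+1}{z}}\sg{\gamma}{z}$ by an outer strong induction on $|\lambda|$, whereas the paper applies its chain-position induction directly to $\gamma$ (using that the down-chain of $\gamma$ aligns with that of $\lambda$); both are valid.
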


Let us pause for a moment to illustrate the main idea of the proof with an example.

\begin{exam}
Let
\[
k=7, \quad \ell=13, \quad \lambda = (7,6,5,5,4,4,4,3,3,3,2,2,1)\in{\rm P}_{13}^7.
\]
Then $b_\lambda = 8$. Consider $z=2$.
The weighted $K$-$k$-Schur functions $\sg{\lambda}{2}$ is represented by the Katalan function:
\begin{equation*}
\sg{\lambda}{2} =
\begin{tikzpicture}[scale=.3,line width=0.5pt,baseline=(a.base)]
\draw (0,7) rectangle (1,6);\node at(0.5,6.5){\tiny\( 7 \)};
\filldraw[red,draw=black] (1,7) rectangle (2,6);\node at(1.5,6.5){\tiny\( \bullet \)};
\filldraw[red,draw=black] (2,7) rectangle (3,6);\node at(2.5,6.5){\tiny\( \bullet \)};
\filldraw[red,draw=black] (3,7) rectangle (4,6);\node at(3.5,6.5){\tiny\( \bullet \)};
\filldraw[red,draw=black] (4,7) rectangle (5,6);\node at(4.5,6.5){\tiny\( \bullet \)};
\filldraw[red,draw=black] (5,7) rectangle (6,6);\node at(5.5,6.5){\tiny\( \bullet \)};
\filldraw[red,draw=black] (6,7) rectangle (7,6);\node at(6.5,6.5){\tiny\( \bullet \)};
\filldraw[red,draw=black] (7,7) rectangle (8,6);\node at(7.5,6.5){\tiny\( \bullet \)};
\filldraw[red,draw=black] (8,7) rectangle (9,6);\node at(8.5,6.5){\tiny\( \bullet \)};
\filldraw[red,draw=black] (9,7) rectangle (10,6);\node at(9.5,6.5){\tiny\( \bullet \)};
\filldraw[red,draw=black] (10,7) rectangle (11,6);\node at(10.5,6.5){\tiny\( \bullet \)};
\filldraw[red,draw=black] (11,7) rectangle (12,6);\node at(11.5,6.5){\tiny\( \bullet \)};
\filldraw[red,draw=black] (12,7) rectangle (13,6);\node at(12.5,6.5){\tiny\( \bullet \)};
\draw (0,6) rectangle (1,5);
\draw (1,6) rectangle (2,5);\node at(1.5,5.5){\tiny\( 6 \)};
\draw (2,6) rectangle (3,5);
\filldraw[red,draw=black] (3,6) rectangle (4,5);
\filldraw[red,draw=black] (4,6) rectangle (5,5);\node at(4.5,5.5){\tiny\( \bullet \)};
\filldraw[red,draw=black] (5,6) rectangle (6,5);\node at(5.5,5.5){\tiny\( \bullet \)};
\filldraw[red,draw=black] (6,6) rectangle (7,5);\node at(6.5,5.5){\tiny\( \bullet \)};
\filldraw[red,draw=black] (7,6) rectangle (8,5);\node at(7.5,5.5){\tiny\( \bullet \)};
\filldraw[red,draw=black] (8,6) rectangle (9,5);\node at(8.5,5.5){\tiny\( \bullet \)};
\filldraw[red,draw=black] (9,6) rectangle (10,5);\node at(9.5,5.5){\tiny\( \bullet \)};
\filldraw[red,draw=black] (10,6) rectangle (11,5);\node at(10.5,5.5){\tiny\( \bullet \)};
\filldraw[red,draw=black] (11,6) rectangle (12,5);\node at(11.5,5.5){\tiny\( \bullet \)};
\filldraw[red,draw=black] (12,6) rectangle (13,5);\node at(12.5,5.5){\tiny\( \bullet \)};
\draw (0,5) rectangle (1,4);
\draw (1,5) rectangle (2,4);
\draw (2,5) rectangle (3,4);\node at(2.5,4.5){\tiny\( 5 \)};
\draw (3,5) rectangle (4,4);
\draw (4,5) rectangle (5,4);
\filldraw[red,draw=black] (5,5) rectangle (6,4);
\filldraw[red,draw=black] (6,5) rectangle (7,4);\node at(6.5,4.5){\tiny\( \bullet \)};
\filldraw[red,draw=black] (7,5) rectangle (8,4);\node at(7.5,4.5){\tiny\( \bullet \)};
\filldraw[red,draw=black] (8,5) rectangle (9,4);\node at(8.5,4.5){\tiny\( \bullet \)};
\filldraw[red,draw=black] (9,5) rectangle (10,4);\node at(9.5,4.5){\tiny\( \bullet \)};
\filldraw[red,draw=black] (10,5) rectangle (11,4);\node at(10.5,4.5){\tiny\( \bullet \)};
\filldraw[red,draw=black] (11,5) rectangle (12,4);\node at(11.5,4.5){\tiny\( \bullet \)};
\filldraw[red,draw=black] (12,5) rectangle (13,4);\node at(12.5,4.5){\tiny\( \bullet \)};
\draw (0,4) rectangle (1,3);
\draw (1,4) rectangle (2,3);
\draw (2,4) rectangle (3,3);
\draw (3,4) rectangle (4,3);\node at(3.5,3.5){\tiny\( 5 \)};
\draw (4,4) rectangle (5,3);
\draw (5,4) rectangle (6,3);
\filldraw[red,draw=black] (6,4) rectangle (7,3);
\filldraw[red,draw=black] (7,4) rectangle (8,3);\node at(7.5,3.5){\tiny\( \bullet \)};
\filldraw[red,draw=black] (8,4) rectangle (9,3);\node at(8.5,3.5){\tiny\( \bullet \)};
\filldraw[red,draw=black] (9,4) rectangle (10,3);\node at(9.5,3.5){\tiny\( \bullet \)};
\filldraw[red,draw=black] (10,4) rectangle (11,3);\node at(10.5,3.5){\tiny\( \bullet \)};
\filldraw[red,draw=black] (11,4) rectangle (12,3);\node at(11.5,3.5){\tiny\( \bullet \)};
\filldraw[red,draw=black] (12,4) rectangle (13,3);\node at(12.5,3.5){\tiny\( \bullet \)};
\draw (0,3) rectangle (1,2);
\draw (1,3) rectangle (2,2);
\draw (2,3) rectangle (3,2);
\draw (3,3) rectangle (4,2);
\draw (4,3) rectangle (5,2);\node at(4.5,2.5){\tiny\( 4 \)};
\draw (5,3) rectangle (6,2);
\draw (6,3) rectangle (7,2);
\draw (7,3) rectangle (8,2);
\filldraw[red,draw=black] (8,3) rectangle (9,2);
\filldraw[red,draw=black] (9,3) rectangle (10,2);\node at(9.5,2.5){\tiny\( \bullet \)};
\filldraw[red,draw=black] (10,3) rectangle (11,2);\node at(10.5,2.5){\tiny\( \bullet \)};
\filldraw[red,draw=black] (11,3) rectangle (12,2);\node at(11.5,2.5){\tiny\( \bullet \)};
\filldraw[red,draw=black] (12,3) rectangle (13,2);\node at(12.5,2.5){\tiny\( \bullet \)};
\draw (0,2) rectangle (1,1);
\draw (1,2) rectangle (2,1);
\draw (2,2) rectangle (3,1);
\draw (3,2) rectangle (4,1);
\draw (4,2) rectangle (5,1);
\draw (5,2) rectangle (6,1);\node at(5.5,1.5){\tiny\( 4 \)};
\draw (6,2) rectangle (7,1);
\draw (7,2) rectangle (8,1);
\draw (8,2) rectangle (9,1);
\filldraw[red,draw=black] (9,2) rectangle (10,1);4
\filldraw[red,draw=black] (10,2) rectangle (11,1);\node at(10.5,1.5){\tiny\( \bullet \)};
\filldraw[red,draw=black] (11,2) rectangle (12,1);\node at(11.5,1.5){\tiny\( \bullet \)};
\filldraw[red,draw=black] (12,2) rectangle (13,1);\node at(12.5,1.5){\tiny\( \bullet \)};
\draw (0,1) rectangle (1,0);
\draw (1,1) rectangle (2,0);
\draw (2,1) rectangle (3,0);
\draw (3,1) rectangle (4,0);
\draw (4,1) rectangle (5,0);
\draw (5,1) rectangle (6,0);
\draw (6,1) rectangle (7,0);\node at(6.5,0.5){\tiny\( 4 \)};
\draw (7,1) rectangle (8,0);
\draw (8,1) rectangle (9,0);
\draw (9,1) rectangle (10,0);
\filldraw[red,draw=black] (10,1) rectangle (11,0);
\filldraw[red,draw=black] (11,1) rectangle (12,0);\node at(11.5,0.5){\tiny\( \bullet \)};
\filldraw[red,draw=black] (12,1) rectangle (13,0);\node at(12.5,0.5){\tiny\( \bullet \)};
\draw (0,0) rectangle (1,-1);
\draw (1,0) rectangle (2,-1);
\draw (2,0) rectangle (3,-1);
\draw (3,0) rectangle (4,-1);
\draw (4,0) rectangle (5,-1);
\draw (5,0) rectangle (6,-1);
\draw (6,0) rectangle (7,-1);
\draw (7,0) rectangle (8,-1);\node at(7.5,-0.5){\tiny\( 3 \)};
\draw (8,0) rectangle (9,-1);
\draw (9,0) rectangle (10,-1);
\draw (10,0) rectangle (11,-1);
\draw (11,0) rectangle (12,-1);
\filldraw[red,draw=black] (12,0) rectangle (13,-1);
\draw (0,-1) rectangle (1,-2);
\draw (1,-1) rectangle (2,-2);
\draw (2,-1) rectangle (3,-2);
\draw (3,-1) rectangle (4,-2);
\draw (4,-1) rectangle (5,-2);
\draw (5,-1) rectangle (6,-2);
\draw (6,-1) rectangle (7,-2);
\draw (7,-1) rectangle (8,-2);
\draw (8,-1) rectangle (9,-2);\node at(8.5,-1.5){\tiny\( 3 \)};
\draw (9,-1) rectangle (10,-2);
\draw (10,-1) rectangle (11,-2);
\draw (11,-1) rectangle (12,-2);
\draw (12,-1) rectangle (13,-2);
\draw (0,-2) rectangle (1,-3);
\draw (1,-2) rectangle (2,-3);
\draw (2,-2) rectangle (3,-3);
\draw (3,-2) rectangle (4,-3);
\draw (4,-2) rectangle (5,-3);
\draw (5,-2) rectangle (6,-3);
\draw (6,-2) rectangle (7,-3);
\draw (7,-2) rectangle (8,-3);
\draw (8,-2) rectangle (9,-3);
\draw (9,-2) rectangle (10,-3);\node at(9.5,-2.5){\tiny\( 3 \)};
\draw (10,-2) rectangle (11,-3);
\draw (11,-2) rectangle (12,-3);
\draw (12,-2) rectangle (13,-3);
\draw (0,-3) rectangle (1,-4);
\draw (1,-3) rectangle (2,-4);
\draw (2,-3) rectangle (3,-4);
\draw (3,-3) rectangle (4,-4);
\draw (4,-3) rectangle (5,-4);
\draw (5,-3) rectangle (6,-4);
\draw (6,-3) rectangle (7,-4);
\draw (7,-3) rectangle (8,-4);
\draw (8,-3) rectangle (9,-4);
\draw (9,-3) rectangle (10,-4);
\draw (10,-3) rectangle (11,-4);\node at(10.5,-3.5){\tiny\( 2 \)};
\draw (11,-3) rectangle (12,-4);
\draw (12,-3) rectangle (13,-4);
\draw (0,-4) rectangle (1,-5);
\draw (1,-4) rectangle (2,-5);
\draw (2,-4) rectangle (3,-5);
\draw (3,-4) rectangle (4,-5);
\draw (4,-4) rectangle (5,-5);
\draw (5,-4) rectangle (6,-5);
\draw (6,-4) rectangle (7,-5);
\draw (7,-4) rectangle (8,-5);
\draw (8,-4) rectangle (9,-5);
\draw (9,-4) rectangle (10,-5);
\draw (10,-4) rectangle (11,-5);
\draw (11,-4) rectangle (12,-5);\node at(11.5,-4.5){\tiny\( 2 \)};
\draw (12,-4) rectangle (13,-5);
\draw (0,-5) rectangle (1,-6);
\draw (1,-5) rectangle (2,-6);
\draw (2,-5) rectangle (3,-6);
\draw (3,-5) rectangle (4,-6);
\draw (4,-5) rectangle (5,-6);
\draw (5,-5) rectangle (6,-6);
\draw (6,-5) rectangle (7,-6);
\draw (7,-5) rectangle (8,-6);
\draw (8,-5) rectangle (9,-6);
\draw (9,-5) rectangle (10,-6);
\draw (10,-5) rectangle (11,-6);
\draw (11,-5) rectangle (12,-6);
\draw (12,-5) rectangle (13,-6);\node at(12.5,-5.5){\tiny\( 1 \)};
\draw[purple,line width=0.8pt] (4,3.5)--(6,3.5);
\draw[purple,line width=0.8pt] (3.5,4)--(3.5,5);
\draw[purple,line width=0.8pt] (6.5,3)--(6.5,1);
\draw[purple,line width=0.8pt] (4,3.5)--(6,3.5);
\draw[purple,line width=0.8pt] (7,0.5)--(10,0.5);
%
\draw[black,line width=0.8pt,->] (13,5.5)--(13.8,5.5);
\node at (14.1,5.5){ \tiny{\textbf{$2$}} };
\draw[black,line width=0.8pt,->] (13,3.5)--(13.8,3.5);
\node at (14.1,3.5){ \tiny{\textbf{$4$}} };
\draw[black,line width=0.8pt,->] (13,0.5)--(13.8,0.5);
\node at (14.1,0.5){ \tiny{\textbf{$7$}} };
\draw[black,line width=0.8pt,->] (3.5,7)--(3.5,7.8);
\node at (3.5,8.2){ \tiny{\textbf{$4$}} };
\draw[black,line width=0.8pt,->] (6.5,7)--(6.5,7.8);
\node at (6.5,8.2){ \tiny{\textbf{$7$}} };
\draw[black,line width=0.8pt,->] (10.5,7)--(10.5,7.8);
\node at (10.5,8.2){ \tiny{\textbf{$11$}} };
\end{tikzpicture}
\, .
\end{equation*}
We obtain $$2\in[\bott_\lambda-1], \quad \lambda_2 = 6>5=\lambda_3$$ and
\begin{align*}
\sg{\lambda}{3} =&\ \sg{\lambda}{2}-L_4\sg{\lambda}{2}\hspace{1cm}(\text{by Lemma~\ref{lem:ztoz1} and $\bdo{\lambda}{2} = 4$})\\
=&\ \sg{\lambda}{2}-\Big( \big( 1-L_8 \big)\sg{(7,6,5,\textcolor{green}{4},4,4,4,3,3,3,2,2,1)}{2} +L_7\sg{\lambda}{2} \Big)\hspace{0.5cm}(\text{by Proposition~\ref{prop:smalld} and $\gamma = \lambda - \epsilon_4 \in{\rm P}_{13}^7$})\\
=&\ \sg{\lambda}{2}-\sg{(7,6,5,4,4,4,4,3,3,3,2,2,1)}{2}
+L_8\sg{(7,6,5,4,4,4,4,3,3,3,2,2,1)}{2}-L_7\sg{\lambda}{2}\\
=&\ \sg{\lambda}{2}-\sg{(7,6,5,4,4,4,4,3,3,3,2,2,1)}{2}
+L_{\textcolor{green}{13}}\sg{(7,6,5,4,4,4,4,3,3,3,2,2,1)}{2}-L_7\sg{\lambda}{2}\\
& \ (\text{by Proposition~\ref{prop:smalld} and $(7,6,5,4,4,4,4,\textcolor{green}{2},3,3,2,2,1)\notin{\rm P}_{13}^7$ for the third summand})\\
=&\ \sg{\lambda}{2}-\sg{(7,6,5,4,4,4,4,3,3,3,2,2,1)}{2}
+L_{13}\sg{(7,6,5,4,4,4,4,3,3,3,2,2,1)}{2}
\\
&\ -\Big( (1-L_{12})\sg{(7,6,5,5,4,4,\textcolor{green}{3},3,3,3,2,2,1)}{2}+ L_{11}\sg{\lambda}{2} \Big) \hspace{1cm} (\text{by Example~\ref{ex:L7}})\\
=&\ \sg{\lambda}{2}-\sg{(7,6,5,4,4,4,4,3,3,3,2,2,1)}{2}
-\sg{(7,6,5,5,4,4,3,3,3,3,2,2,1)}{2}+L_{13}\sg{(7,6,5,4,4,4,4,3,3,3,2,2,1)}{2}\\
&\ +L_{12}\sg{(7,6,5,5,4,4,3,3,3,3,2,2,1)}{2}-L_{11}\sg{\lambda}{2}\\
=&\ \sg{\lambda}{2}-\sg{(7,6,5,4,4,4,4,3,3,3,2,2,1)}{2}
-\sg{(7,6,5,5,4,4,3,3,3,3,2,2,1)}{2}
+\sg{(7,6,5,4,4,4,4,3,3,3,2,2,\textcolor{green}{0})}{2}\\
&\ +\sg{(7,6,5,5,4,4,3,3,3,3,2,\textcolor{green}{1},1)}{2}-0 \hspace{1cm} (\text{by Proposition~\ref{prop:bigd} for the last three summands}).
\end{align*}
The coefficients in the right hand side of the above equation
are alternating positive:
\begin{align*}
(-1)^{|\lambda|-|\lambda|}\cdot 1 =&\ 1,\\
(-1)^{|\lambda|-|(7,6,5,\textcolor{green}{4},4,4,4,3,3,3,2,2,1)|}\cdot(-1) = &\ (-1)\cdot(-1) = 1, \\
(-1)^{|\lambda|-|(7,6,5,5,4,4,\textcolor{green}{3},3,3,3,2,2,1)|}\cdot(-1) = &\ (-1)\cdot(-1) = 1, \\
(-1)^{|\lambda|
-|(7,6,5,\textcolor{green}{4},4,4,4,3,3,3,2,2,\textcolor{green}{0})|}\cdot 1 = &\ (-1)^2\cdot 1 = 1, \\
(-1)^{|\lambda|
-|(7,6,5,5,4,4,\textcolor{green}{3},3,3,3,2,\textcolor{green}{1},1)|}\cdot 1 = &\ (-1)^2\cdot 1 = 1.
\end{align*}
\hspace*{\fill}$\square$
\mlabel{exam:ind}
\end{exam}

\begin{proof}[proof of Proposition~\mref{prop:ind}]
Suppose first $z = \bott_\lambda$. Lemma~\mref{lem:ztoz1} and Proposition~\mref{prop:bigd} (taking $a=1$) imply that
\begin{equation*}
\sg{\lambda}{z+1} =
\left\{
\begin{array}{ll}
\sg{\lambda}{z} -\sg{\lambda-\epsilon_{\bdo{\lambda}{z}}}{z}, & \quad \text{if $\lambda-\epsilon_{\bdo{\lambda}{z}}\in\pkl$},\\
\sg{\lambda}{z}, & \quad \text{otherwise}.
\end{array}
\right.
\end{equation*}
This yields the required result by
$$ (-1)^{|\lambda|-|\lambda|}\cdot 1 = 1\in\ZZ_{\geq0}, \quad  (-1)^{|\lambda|-|\lambda-\epsilon_{\bdo{\lambda}{z}}|}\cdot (-1)= 1\in\ZZ_{\geq0}. $$

Suppose next $z\in[\bott_\lambda-1]$. Then $\lambda_{z}>\lambda_{z+1}$ by assumption.
Denote $c'\in\ZZ_{\geq0}$ such that $\bDo{\lambda}{c'}{z} = {\rm bot}_{\dkl}(z)$. Then $\bDo{\lambda}{c'}{z}\in[\bott_\lambda+1,\ell]$ by Remark~\mref{re:rootfact}~(1) and there is a bounce path
$${\rm path}_{\dkl}\Big(\bDo{\lambda}{1}{z},\bDo{\lambda}{c'}{z}\Big) = \Big( \bDo{\lambda}{1}{z}, \ldots, \bDo{\lambda}{c'}{z}\Big)$$
in the root ideal $\dkl$. We use the induction on $c\in[0, c'-1]$ to prove that, if $\lambda_z>\lambda_{z+1}$ with $\lambda\in\pkl$, then
\begin{equation}
L_{\bDo{\lambda}{c'-c}{z}}\sg{\lambda}{z} =\sum_{\mu\in\pkl, \, \mu_x = \lambda_x\,\textup{for}\,x\in[z]}b_{\lambda\mu}\sg{\mu}{z}, \quad \text{where }\, (-1)^{|\lambda|-|\mu|-1} b_{\lambda\mu}\in\ZZ_{\geq0}.
\mlabel{eq:indaim}
\end{equation}

The initial step of $c = 0$ holds by Proposition~\mref{prop:bigd}. Consider the inductive step of $c\in[c'-1]$. Note that $\bDo{\lambda}{c'-c}{z} \in [\bott_\lambda]$. Proposition~\mref{prop:smalld} implies that
\begin{equation}
L_{\bDo{\lambda}{c'-c}{z}}\sg{\lambda}{z} =
\left\{
\begin{array}{ll}
\big(1-L_{\bDo{\gamma}{c'-(c-1)}{z}}\big)\sg{\gamma}{z} + L_{\bDo{\lambda}{c'-(c-1)}{z}}\sg{\lambda}{z}, & \quad \text{if $\gamma:= \lambda-\epsilon_{\bDo{\lambda}{c'-c}{z}}\in\pkl$},\\
L_{\bDo{\lambda}{c'-(c-1)}{z}}\sg{\lambda}{z}, & \quad \text{otherwise}.
\end{array}
\right.
\mlabel{eq:indind}
\end{equation}
By the inductive hypothesis,
\begin{equation}
L_{\bDo{\lambda}{c'-(c-1)}{z}}\sg{\lambda}{z} = \sum_{\mu^{(2)}\in\pkl, \, \mu^{(2)}_x = \lambda_x\,\textup{for}\,x\in[z]}b_{\lambda\mu^{(2)}}\sg{\mu^{(2)}}{z}, \quad \text{where }\,(-1)^{|\lambda|-|\mu^{(2)}|-1} b_{\lambda\mu^{(2)}}\in\ZZ_{\geq0}.
\mlabel{eq:indhy2}
\end{equation}
According to~(\mref{eq:indind}).
We divided the remaining proof into the following two cases.

{\bf Case~1.} $\gamma= \lambda-\epsilon_{\bDo{\lambda}{c'-c}{z}}\in\pkl$.
In terms of the definition of $c'$, we have $\bDo{\lambda}{c'-c}{z} \neq z,z+1$ and so
\[
\gamma_z = \big(\lambda-\epsilon_{\bDo{\lambda}{c'-c}{z}}\big)_z = \lambda_z
>\lambda_{z+1} = \big(\lambda-\epsilon_{\bDo{\lambda}{c'-c}{z}}\big)_{z+1} = \gamma_{z+1}.
\]
By the inductive hypothesis,
\begin{align}
L_{\bDo{\gamma}{c'-(c-1)}{z}}\sg{\gamma}{z} =&\ \sum_{\mu^{(1)}\in\pkl, \, \mu^{(1)}_x = \gamma_x\,\textup{for}\,x\in[z]}b_{\gamma\mu^{(1)}}\sg{\mu^{(1)}}{z}, \quad\text{where }\,(-1)^{|\gamma|-|\mu^{(1)}|-1} b_{\gamma\mu^{(1)}}.
\mlabel{eq:indhy1}
\end{align}
Denote $b_{\lambda\mu^{(1)}}:= -b_{\gamma\mu^{(1)}}$. Then
\begin{equation}
\begin{aligned}
(-1)^{|\lambda|-|\mu^{(1)}|-1}b_{\lambda\mu^{(1)}} =&\
(-1)^{|\lambda|-|\mu^{(1)}|-1}\big(-b_{\gamma\mu^{(1)}}\big)\\
=&\
(-1)^{|\gamma|+1-|\mu^{(1)}|-1}\big(-b_{\gamma\mu^{(1)}}\big)\\
=&\ (-1)^{|\gamma|-|\mu^{(1)}|-1} b_{\gamma\mu^{(1)}} \\
\overset{(\ref{eq:indhy1})}{\in}&\  \ZZ_{\geq0}.
\end{aligned}
\mlabel{eq:gamtolam}
\end{equation}
Now we have
\begin{align*}
L_{\bDo{\lambda}{c'-c}{z}}\sg{\lambda}{z} =&\
\big(1-L_{\bDo{\gamma}{c'-(c-1)}{z}}\big)\sg{\gamma}{z} + L_{\bDo{\lambda}{c'-(c-1)}{z}}\sg{\lambda}{z} \hspace{1cm} (\text{by~(\ref{eq:indind})})\\
=&\
\sg{\gamma}{z} - \sum_{\mu^{(1)}\in\pkl, \, \mu^{(1)}_x = \gamma_x\,\textup{for}\,x\in[z]}b_{\gamma\mu^{(1)}}\sg{\mu^{(1)}}{z} + \sum_{\mu^{(2)}\in\pkl, \, \mu^{(2)}_x = \lambda_x\,\textup{for}\,x\in[z]}b_{\lambda\mu^{(2)}}\sg{\mu^{(2)}}{z}\\
& \hspace{7.5cm} (\text{by~(\ref{eq:indhy2}) and~(\ref{eq:indhy1}))}\\
=&\
\sg{\gamma}{z}+ \sum_{\mu^{(1)}\in\pkl, \, \mu^{(1)}_x = \lambda_x\,\textup{for}\,x\in[z]}b_{\lambda\mu^{(1)}}\sg{\mu^{(1)}}{z} + \sum_{\mu^{(2)}\in\pkl, \, \mu^{(2)}_x = \lambda_x\,\textup{for}\,x\in[z]}b_{\lambda\mu^{(2)}}\sg{\mu^{(2)}}{z}\\
& \hspace{3cm} (\text{by $b_{\lambda\mu^{(1)}}:= -b_{\gamma\mu^{(1)}}$ for the second summand})\\
=&\ \sum_{\mu\in\pkl, \, \mu_x = \lambda_x\,\textup{for}\,x\in[z]} b_{\lambda\mu}\sg{\mu}{z},
\end{align*}
where
\[
b_{\lambda\gamma}:=1, \quad  \mu := \mu^{(1)} = \mu^{(2)}, \quad b_{\lambda\mu} := b_{\lambda\mu^{(1)}} + b_{\lambda\mu^{(2)}}.
\]
Further,
\begin{align*}
(-1)^{|\lambda|-|\gamma|-1}\cdot 1 =&\ 1\in\ZZ_{\geq0},\\
(-1)^{|\lambda|-|\mu|-1} b_{\lambda\mu}  =&\ (-1)^{|\lambda|-|\mu|-1}(b_{\lambda\mu^{(1)}} + b_{\lambda\mu^{(2)}})\\
=&\ (-1)^{|\lambda|-|\mu^{(1)}|-1}b_{\lambda\mu^{(1)}} + (-1)^{|\lambda|-|\mu^{(2)}|-1}b_{\lambda\mu^{(2)}}\hspace{1cm}(\text{by $\mu = \mu^{(1)} = \mu^{(2)}$})\\
\in &\ \ZZ_{\geq 0} \hspace{1cm}(\text{by~(\ref{eq:indhy2}) and~(\ref{eq:gamtolam})}).
\end{align*}
Therefore~(\mref{eq:indaim}) is valid.

{\bf Case~2.} $\gamma= \lambda-\epsilon_{\bDo{\lambda}{c'-c}{z}}\notin\pkl$. Then
\begin{align*}
L_{\bDo{\lambda}{c'-c}{z}}\sg{\lambda}{z}=&\
L_{\bDo{\lambda}{c'-(c-1)}{z}}\sg{\lambda}{z} \hspace{1cm} (\text{by~(\ref{eq:indind})})\\
=&\ \sum_{\mu\in\pkl, \, \mu_x = \lambda_x\,\textup{for}\,x\in[z]}b_{\lambda\mu}\sg{\mu}{z}
\hspace{1cm} (\text{by~(\ref{eq:indhy2}) and denote $\mu=\mu^{(2)}$}),
\end{align*}
where $$(-1)^{|\lambda|-|\mu|-1} b_{\lambda\mu}\in\ZZ_{\geq0}.$$
This confirms that~(\mref{eq:indaim}) holds and completes the inductive proof of~(\mref{eq:indaim}).

Finally, taking $c = c'-1$ in~(\mref{eq:indaim}) yields
\begin{equation}
L_{\bdo{\lambda}{z}}\sg{\lambda}{z} =\sum_{\mu\in\pkl, \, \mu_x = \lambda_x\,\textup{for}\,x\in[z]}b_{\lambda\mu}\sg{\mu}{z}, \quad \text{where }\, (-1)^{|\lambda|-|\mu|-1} b_{\lambda\mu}\in\ZZ_{\geq0}.
\mlabel{eq:rform}
\end{equation}
Substituting the above equation into Lemma~\mref{lem:ztoz1} gives
\begin{align*}
\sg{\lambda}{z+1} =&\ \sg{\lambda}{z} - L_{\bdo{\lambda}{z}}\sg{\lambda}{z}
= \sg{\lambda}{z} + \sum_{\mu\in\pkl, \, \mu_x = \lambda_x\,\textup{for}\,x\in[z]}(-1) b_{\lambda\mu}\sg{\mu}{z},
%
%
\end{align*}
which is the required form of~\eqref{eq:ind} by~(\mref{eq:rform}). Here we use the convention $\sg{\lambda}{z} = b_{\lambda \lambda}\sg{\lambda}{z}$ with $b_{\lambda \lambda}:=1$.
This completes the proof.
\end{proof}

\section{Alternating positivity of (weighted) $K$-$k$-Schur functions} \mlabel{sec:comproof}
Our goal in this section is to exploit on the proof of Theorem~\mref{thm:aim}
and further the proof of Theorem~\mref{thm:aim1}. Let us first exhibit an example to illustrate the main idea of the proof of
Theorem~\mref{thm:aim}.

\begin{exam}
Let
\[
k=5, \quad \ell=6, \quad \lambda = (5,4,3,3,2,2)\in{\rm P}_6^5.
\]
Then $\bott_\lambda=3$ and $\lambda \in\hat{\rm P}_6^5$.
We have
\begin{align}
\sg{\lambda}{4}
= &\ \sg{\lambda}{3} - \sg{(5,4,3,3,2,\textcolor{green}{1})}{3} \hspace{1cm}(\text{by Lemma~\ref{lem:ztoz1} and Proposition~\ref{prop:bigd}})\notag\\
=&\ \big( \sg{\lambda}{2} - \sg{(5,4,3,\textcolor{green}{2},2,2)}{2} \big)
- \big(  \sg{(5,4,3,3,2,1)}{2} - \sg{(5,4,3,\textcolor{green}{2},2,1)}{2} \big) \mlabel{eq:finalexam}\\
& \hspace{4cm}(\text{by Lemma~\ref{lem:ztoz1} and Proposition~\ref{prop:bigd}}).\notag
\end{align}
For the first summand $\sg{\lambda}{2}$ in~\eqref{eq:finalexam},
\begin{align}
\sg{\lambda}{2} =&\ \sg{\lambda}{1} - L_{\bdo{\lambda}{1}}\sg{\lambda}{1}
\hspace{1cm} (\text{by Lemma~\ref{lem:ztoz1}})\notag \\
=&\ \sg{\lambda}{1} - \big( (1- L_{\bDo{(5,3,3,3,2,2)}{1+1}{1}}) \sg{(5,\textcolor{green}{3},3,3,2,2)}{1} + L_{\bDo{\lambda}{1+1}{1}}\sg{\lambda}{1} \big)\notag\\
&\ (\text{by $\bdo{\lambda}{1}=2\in[\bott_\lambda]$ and Proposition~\ref{prop:smalld} for the second summand})\notag\\
=&\ \sg{\lambda}{1} - \sg{(5,3,3,3,2,2)}{1} + L_{\bDo{(5,3,3,3,2,2)}{2}{1}} \sg{(5,3,3,3,2,2)}{1} - L_{\bDo{\lambda}{2}{1}}\sg{\lambda}{1} \notag\\
=&\ \sg{\lambda}{1} - \sg{(5,3,3,3,2,2)}{1}+0 - \sg{(5,4,3,\textcolor{green}{2},2,2)}{1} \mlabel{eq:finallam2}\\
&\ (\text{by Proposition~\ref{prop:bigd} for the third and fourth summands}). \notag
\end{align}
Using a similar argument as for $\sg{\lambda}{2}$, the remaining three summands in~\eqref{eq:finalexam} can be computed as follows:
\begin{equation}
\begin{aligned}
\sg{(5,4,3,2,2,2)}{2} =&\ \sg{(5,4,3,2,2,2)}{1} - \sg{(5,\textcolor{green}{3},3,2,2,2)}{1} +0+0, \\
\sg{(5,4,3,3,2,1)}{2} =&\ \sg{(5,4,3,3,2,1)}{1}- \sg{(5,\textcolor{green}{3},3,3,2,1)}{1}
+ \sg{(5,\textcolor{green}{3},3,3,\textcolor{green}{1},1)}{1} -\sg{(5,4,3,\textcolor{green}{2},2,1)}{1}, \\
\sg{(5,4,3,2,2,1)}{2} =&\ \sg{(5,4,3,2,2,1)}{1} - \sg{(5,\textcolor{green}{3},3,2,2,1)}{1}
+\sg{(5,\textcolor{green}{3},3,2,\textcolor{green}{1},1)}{1}+0.
\end{aligned}
\mlabel{eq:finalthreesum}
\end{equation}
Inserting~\eqref{eq:finallam2} and~\eqref{eq:finalthreesum} into~\eqref{eq:finalexam} yields
\begin{align*}
\sg{\lambda}{4}=&\ \big( \sg{\lambda}{1} - \sg{(5,3,3,3,2,2)}{1}- \sg{(5,4,3,2,2,2)}{1} \big) -\big( \sg{(5,4,3,2,2,2)}{1} - \sg{(5,3,3,2,2,2)}{1} \big) \\
&\ -\big( \sg{(5,4,3,3,2,1)}{1}- \sg{(5,3,3,3,2,1)}{1}
+ \sg{(5,3,3,3,1,1)}{1} -\sg{(5,4,3,2,2,1)}{1} \big)\\
&\ +\big( \sg{(5,4,3,2,2,1)}{1} - \sg{(5,3,3,2,2,1)}{1}
+\sg{(5,3,3,2,1,1)}{1} \big)\\
=&\ g_\lambda^{(k)} -g_{(5,\textcolor{green}{3},3,3,2,2)}^{(k)} -2g_{(5,4,3,\textcolor{green}{2},2,2)}^{(k)}
+g_{(5,\textcolor{green}{3},3,\textcolor{green}{2},2,2)}^{(k)} -g_{(5,4,3,3,2,\textcolor{green}{1})}^{(k)}
+g_{(5,\textcolor{green}{3},3,3,2,\textcolor{green}{1})}^{(k)}\\
&\ -g_{(5,\textcolor{green}{3},3,3,\textcolor{green}{1},\textcolor{green}{1})}^{(k)}
+2g_{(5,4,3,\textcolor{green}{2},2,\textcolor{green}{1})}^{(k)}
-g_{(5,\textcolor{green}{3},3,\textcolor{green}{2},2,\textcolor{green}{1})}^{(k)}
+g_{(5,\textcolor{green}{3},3,\textcolor{green}{2},\textcolor{green}{1}
,\textcolor{green}{1})}^{(k)} \hspace{1cm} (\text{by~\eqref{eq:sgtofg}}).
\end{align*}
By direct calculation, the coefficients are alternating positive:
\begin{align*}
& (-1)^{|\lambda|-|\lambda|}\cdot 1 = 1, \quad (-1)^{|\lambda|-|(5,\textcolor{green}{3},3,3,2,2)|}\cdot(-1) = (-1)\cdot(-1) = 1,\\
& (-1)^{|\lambda|-|(5,4,3,\textcolor{green}{2},2,2)|}\cdot(-2) = (-1)\cdot(-2) =2, \quad (-1)^{|\lambda|-|(5,\textcolor{green}{3},3,\textcolor{green}{2},2,2)|}\cdot 1 = (-1)^2\cdot 1 = 1,\\
& (-1)^{|\lambda|-|(5,4,3,3,2,\textcolor{green}{1})|}\cdot(-1) = (-1)\cdot(-1) = 1, \quad (-1)^{|\lambda|-|(5,\textcolor{green}{3},3,3,2,\textcolor{green}{1})|}\cdot 1 = (-1)^2\cdot 1 = 1,\\
& (-1)^{|\lambda|-|(5,\textcolor{green}{3},3,3,
\textcolor{green}{1},\textcolor{green}{1})|}\cdot(-1) = (-1)^3\cdot(-1) = 1, \quad (-1)^{|\lambda|-|(5,4,3,\textcolor{green}{2},2,\textcolor{green}{1})|}\cdot 2 = (-1)^2\cdot 2 = 2,\\
& (-1)^{|\lambda|-|(5,\textcolor{green}{3},3,\textcolor{green}{2},
2,\textcolor{green}{1})|}\cdot(-1) = (-1)^3\cdot(-1) = 1, \quad (-1)^{|\lambda|-|(5,\textcolor{green}{3},3,\textcolor{green}{2},
\textcolor{green}{1},
\textcolor{green}{1})|}\cdot 1 = (-1)^4\cdot 1 = 1.
\end{align*}
\hspace*{\fill}$\square$
\end{exam}

We are in a position to give a detailed proof of Theorem~\mref{thm:aim}.

\begin{proof}[proof of Theorem~\mref{thm:aim}]
The proof proceeds by induction on $z\in[0,\bott_\lambda]$.
The initial step of $z = 0$ is from~(\mref{eq:sgtofg}), which shows that $\sg{\lambda}{1} = g_\lambda^{(k)}$.
Consider the inductive step of $1\leq z\leq \bott_\lambda$, that is, $z\in[\bott_\lambda]$.
By~Proposition~\ref{prop:ind},
\begin{equation}
\sg{\lambda}{z+1} = \sum_{\mu'\in\pkl, \, \mu'_x = \lambda_x\,\textup{for}\,x\in[z]}b_{\lambda\mu'}\sg{\mu'}{z},
\quad \text{where }\, (-1)^{|\lambda|-|\mu'|} b_{\lambda\mu'}\in\ZZ_{\geq0}.
\mlabel{eq:finalini}
\end{equation}
Consider each $\mu'\in\pkl$ in the right hand side of~(\mref{eq:finalini}).
Notice that the condition $\mu'_x = \lambda_x$ for $x\in[z]$ implies that the root ideals $\Delta^k(\mu')$ and $\Delta^k(\lambda)$
are the same in rows $1, \ldots, z$.
By $z\in[\bott_\lambda]$,  we have
\begin{equation*}
z\in[\bott_{\mu'}], \quad\text{that is},\ \,z-1\in[0, \bott_{\mu'}-1]\subsetneq [0, \bott_{\mu'}]
\mlabel{eq:zz1}
\end{equation*}
and
$$
\mu'_x = \lambda_x > \lambda_{x+1} = \mu'_{x+1}\,\text{ for each }\, x\in[z-1].
$$
Hence
\[
z-1\neq \bott_{\mu'}, \quad \mu'_1>\cdots>\mu'_{z-1}>\mu'_{z}.
\]
Now by the inductive hypothesis, each $\sg{\mu'}{z} $ in the right hand side of~(\mref{eq:finalini}) satisfies
\begin{equation}
\sg{\mu'}{z} =  \sum_{\mu\in{\rm P}}b_{\mu'\mu}g_\mu^{(k)}, \quad \text{where }\, (-1)^{|\mu'|-|\mu|}b_{\mu'\mu}\in\ZZ_{\geq0}.
\mlabel{eq:finalini1}
\end{equation}
Inserting the above equation into~(\mref{eq:finalini}),
\begin{align*}
\sg{\lambda}{z+1} &=\ \sum_{\mu'\in\pkl, \, \mu'_x = \lambda_x\,\textup{for}\,x\in[z]}b_{\lambda\mu'}\Bigg( \sum_{\mu\in{\rm P}}b_{\mu'\mu}g_\mu^{(k)} \Bigg) \\
&=\ \sum_{\mu\in{\rm P}}b_{\lambda\mu'}b_{\mu'\mu}g_\mu^{(k)}\\
&=:\ \sum_{\mu\in{\rm P}} b_{\lambda\mu}g_\mu^{(k)}.
\end{align*}
Then~(\mref{eq:finalonlyneed}) holds by
\begin{align*}
(-1)^{|\lambda|-|\mu|}b_{\lambda\mu} = &\ (-1)^{|\lambda|-|\mu'|+|\mu'|-|\mu|}b_{\lambda\mu'}b_{\mu'\mu}\\
=&\ (-1)^{|\lambda|-|\mu'|}b_{\lambda\mu'} (-1)^{|\mu'|-|\mu|}b_{\mu'\mu} \\
\in&\ \ZZ_{\geq0} \hspace{1cm} (\text{by~\eqref{eq:finalini} and~\eqref{eq:finalini1}}).
\end{align*}
This completes the proof.
\end{proof}

The following simple result will be used.

\begin{lemma}
For $\lambda\in\pkl$ and $x\in[\ell]$, $x\in[\bott_\lambda]$ if and only if
$
k-\lambda_x+x<\ell
$.
\mlabel{lem:bl}
\end{lemma}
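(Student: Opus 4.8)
The statement reduces to two elementary facts: (i) a row $x$ of the root ideal $\dkl$ contains a root exactly when $k-\lambda_x+x<\ell$, and (ii) the quantity $x-\lambda_x$ is strictly increasing in $x\in[\ell]$ because $\lambda$ is a partition. Granting these, the definition of the bottom pins down $\bott_\lambda$ as the largest $x$ with $k-\lambda_x+x<\ell$, and monotonicity upgrades this to the equivalence claimed.

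First I would establish (i). By the definition~\eqref{eq:dkl} of $\dkl$, row $x$ contains a root iff there is $j$ with $(x,j)\in\dkl$, i.e.\ with $x<j\le\ell$ and $k-\lambda_x+x<j$. Since $\lambda\in\pkl$ is $k$-bounded we have $\lambda_x\le k$, hence $k-\lambda_x+x\ge x$, so the condition $x<j$ is automatically implied by $k-\lambda_x+x<j$. Therefore such a $j$ exists in $[\ell]$ iff $k-\lambda_x+x+1\le\ell$, i.e.\ iff $k-\lambda_x+x<\ell$. Note also that row $\ell$ can never contain a root (a root $(\ell,j)$ would need $\ell<j\le\ell$), so by Definition~\ref{def:com}(4) one has $\bott_\lambda\le\ell-1$; applying (i) to $x=\bott_\lambda$ and to $x=\bott_\lambda+1\le\ell$ gives $k-\lambda_{\bott_\lambda}+\bott_\lambda<\ell$ and $k-\lambda_{\bott_\lambda+1}+\bott_\lambda+1\ge\ell$.

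Next, for $x<x'$ in $[\ell]$ the inequalities $x'-x\ge1$ and $\lambda_x\ge\lambda_{x'}$ give $(x'-\lambda_{x'})-(x-\lambda_x)=(x'-x)+(\lambda_x-\lambda_{x'})\ge1$, so $x\mapsto k-\lambda_x+x$ is strictly increasing on $[\ell]$. Combining this with the two inequalities above: for $x\le\bott_\lambda$ we get $k-\lambda_x+x\le k-\lambda_{\bott_\lambda}+\bott_\lambda<\ell$, while for $x\ge\bott_\lambda+1$ we get $k-\lambda_x+x\ge k-\lambda_{\bott_\lambda+1}+\bott_\lambda+1\ge\ell$. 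Hence $k-\lambda_x+x<\ell$ holds precisely when $x\le\bott_\lambda$, i.e.\ when $x\in[\bott_\lambda]$. (In the degenerate case $\dkl=\emptyset$, where $[\bott_\lambda]=\emptyset$, part (i) shows $k-\lambda_x+x\ge\ell$ for every $x\in[\ell]$ and the equivalence is vacuous.)

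The only real subtlety is the bookkeeping at the boundary $x=\ell$: one must observe that row $\ell$ carries no root, so that $\bott_\lambda+1$ is a genuine row to which (i) applies, and note that $\lambda_\ell\le k$ is exactly what keeps $k-\lambda_\ell+\ell\ge\ell$ consistent with this. Everything else is just the monotonicity of $x-\lambda_x$ and the definition of $\dkl$, so I do not anticipate further difficulty.
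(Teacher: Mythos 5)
Your proof is correct and follows essentially the same route as the paper's: both reduce the statement to the observation that row $x$ of $\dkl$ contains a root precisely when $k-\lambda_x+x<\ell$. The only difference is that where the paper directly invokes the definition of the bottom to identify the set of rows containing roots with $[\bott_\lambda]$, you justify this initial-segment property explicitly via the strict monotonicity of $x\mapsto k-\lambda_x+x$ — a harmless and arguably welcome extra step.
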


\begin{proof}
By Definition~\mref{def:com}, $x\in[\bott_\lambda]$ if and only if row $x$ has at least one root in the root ideal $\dkl$. Besides, (\mref{eq:dkl}) implies that row $x$ has at least one root in $\dkl$ if and only if $
k-\lambda_x+x<\ell
$. Hence the lemma holds.
%
\end{proof}

We are ready to give the proof of Theorem~\ref{thm:aim1}.

\begin{proof}[Proof of Theorem~\ref{thm:aim1}]
First,
\begin{align*}
\lambda\in\spkl \quad \Leftrightarrow & \quad \lambda_{x-1}>\lambda_x, \,\text{ if }\,k-\lambda_x+x<\ell \hspace{1cm} (\text{by~\eqref{eq:spkl}}) \\
\Leftrightarrow & \quad \lambda_{x-1}>\lambda_{x},\,\text{ if }\, x\in[\bott_\lambda]\hspace{1cm} (\text{by Lemma~\ref{lem:bl}}) \\
\Leftrightarrow & \quad \lambda_0> \lambda_1>\cdots>\lambda_{\bott_\lambda} \\
\Leftrightarrow & \quad \lambda_1>\cdots>\lambda_{\bott_\lambda} \hspace{1cm} (\text{by $\lambda_0:=\infty$}).
\end{align*}
Further, applying Theorem~\mref{thm:aim} with $z = \bott_\lambda$ yields
\begin{equation*}
\fg{\lambda}{k} \overset{\eqref{eq:sgtofg}}{=} \sg{\lambda}{\bott_\lambda+1}
= \sum_{\mu\in{\rm P}}b_{\lambda\mu}g_\mu^{(k)}, \quad
\text{where }\, (-1)^{|\lambda|-|\mu|}b_{\lambda\mu}\in\ZZ_{\geq0}.
\end{equation*}
This completes the proof.
\end{proof}

\smallskip

\noindent
{\bf Acknowledgements}: This work is supported by the Natural Science Foundation of Gansu Province (25JRRA644), Innovative Fundamental Research Group Project of Gansu Province (23JRRA684) and Longyuan Young Talents of Gansu Province.

\noindent
{\bf Declaration of interests.} The authors have no conflicts of interest to disclose.

\noindent
{\bf Data availability.} Data sharing is not applicable as no new data were created or analyzed.

\end{document}